\documentclass[11pt,a4paper]{article}

\usepackage[margin=1in]{geometry}

\usepackage[utf8]{inputenc}
\usepackage{amsmath}
\usepackage{amsfonts}
\usepackage{amssymb}
\usepackage{graphicx}
\usepackage{color}
\usepackage{amsthm}
\usepackage{comment}
\usepackage{enumerate}
\usepackage[show]{ed}
\usepackage{cleveref}
\usepackage[ruled,vlined]{algorithm2e}
\usepackage{multirow}


\newtheorem{theorem}{Theorem}[section]
\newtheorem{corollary}[theorem]{Corollary}
\newtheorem{lemma}[theorem]{Lemma}
\newtheorem{proposition}[theorem]{Proposition}
\theoremstyle{definition}
\newtheorem{definition}[theorem]{Definition}
\newtheorem{remark}[theorem]{Remark}
\newtheorem{example}[theorem]{Example}
\newtheorem{assumption}[theorem]{Assumption} 
\newtheorem{notation}[theorem]{Notation}

\numberwithin{equation}{section}

\newcommand{\C}{\mathrm{C}}
\newcommand{\quasiunif}{\rho} 
\newcommand{\phistab}{\Phi^{h}}
\newcommand{\Qstab}{Q_{h}}
\newcommand{\Qstabhat}{\widehat{Q}_{h}}
\newcommand{\Qhat}{\widehat{Q}_{h}}
\newcommand{\sgmin}{\sigma_{\min}}
\newcommand{\sgmax}{\sigma_{\max}}
\newcommand{\sgSmax}{(\sigma_S)_{\max}}
\newcommand{\sgSmin}{(\sigma_S)_{\min}}

\newcommand{\cS}{\mathcal{S}}
\newcommand{\cC}{\mathcal{C}}
\newcommand{\cK}{\mathcal{K}}
\newcommand{\cP}{\mathbb{P}}
\newcommand{\cSmu}{\mathcal{S}_\mu}

\newcommand{\cSmuk}{\mathcal{S}_{\mu_k}}
\newcommand{\Q}{\mathcal{Q}}

\newcommand{\uhat}{\widehat{u}}
\newcommand{\cE}{\mathbb{E}}
\newcommand{\V}{\mathbb{V}}
\newcommand{\cO}{\mathcal{O}}
\newcommand{\cLLtwo}{ {L_2} \mapsto {L_2} } 
\newcommand{\cLLinf}{ {L_\infty} \mapsto {L_\infty} }
\newcommand{\cLc}{\C \mapsto \C}
\newcommand{\LinftoC}{L_\infty \mapsto \C} 
\newcommand{\sgn}{\text{sgn}}
\newcommand{\rd}{\text{ d}	}
\newcommand{\wsigma}{\mathcal{P}^h \sigma } 
\newcommand{\cPh}{\mathcal{P}^{h}}

\newcommand{\cR}{\mathcal{R}(\sigma,\sigma_S)}
\newcommand{\cRdash}{\mathcal{R}'(\sigma,\sigma_S)}
\newcommand{\cRone}{\mathcal{R}_1(\sigma,\sigma_S)}
\newcommand{\cRtwo}{\mathcal{R}_2(\sigma,\sigma_S)}
\newcommand{\cRthr}{\mathcal{R}_3(\sigma,\sigma_S)}
\newcommand{\cRfr}{\mathcal{R}_4(\sigma,\sigma_S)}

\newcommand{\MC}{{MC}}
\newcommand{\Ein}{ \text{Ein} }

\newcommand{\quasunif}{\rho}
\newcommand{\brov}[1]{\overline{#1}}
\newcommand{\brun}[1]{\underline{#1}}
\newcommand{\hmax}{h^{\max}_{\omega}}

\newcommand{\igg}[1]{\textcolor{black}{#1}}
\newcommand{\iggb}[1]{\textcolor{black}{#1}}
\renewcommand{\aleph}{J}

\title{Full Error Analysis and Uncertainty Quantification for the Heterogeneous Transport Equation in Slab Geometry}

\author{Ivan G. Graham$^1$, 
  Matthew J. Parkinson$^1$ and
  Robert Scheichl$^{1,2}$%
}


\begin{document}

\maketitle
\begin{center}
\begin{scriptsize}

\vspace{-0.5cm}

\noindent
${}^1$ Dept of Mathematical Sciences, University of Bath, Bath BA2
7AY, UK.\\ \indent {\tt I.G.Graham@bath.ac.uk}

\vspace{0.1cm}

\noindent
${}^2$ Institut f\"ur Angewandte Mathematik, Universit\"at Heidelberg,
69120 Heidelberg, Germany. \\ \indent {\tt r.scheichl@uni-heidelberg.de}

\vspace{0.5cm}

\end{scriptsize}
\end{center}

\begin{abstract}
We present an  analysis of multilevel Monte Carlo techniques for  the forward problem of
    uncertainty quantification for the radiative transport equation, when the coefficients
    ({\em cross-sections})  are heterogenous random fields.  To do this,  we first give
    a new error analysis for the  combined spatial and angular discretisation in the deterministic case, 
    with  error estimates which are explicit in the  coefficients (and allow for very low regularity and jumps).
  This detailed error analysis is done for the 1D space  - 1D  angle slab geometry case with classical diamond differencing.    Under reasonable  assumptions on the statistics of the coefficients,  we then prove an  error estimate for the random
    problem in a suitable Bochner space. Because the problem is not self-adjoint, stability
    can only be proved under a path-dependent mesh resolution condition. This means that, while the Bochner space
    error estimate is of order $\cO(h^\eta)$ for some $\eta$, where   $h$ is a  (deterministically chosen)  mesh diameter,
    smaller mesh sizes might be needed for some realisations. We also show that the 
    expected cost for computing a typical quantity of interest remains of the same order as for a single sample.  
    This leads to rigorous complexity estimates for Monte Carlo and multilevel Monte Carlo:
    For particular linear solvers, the multilevel version gives up to two orders
    of magnitude improvement over Monte Carlo. We provide numerical results  supporting the theory.
\end{abstract}

\textbf{Keywords} - Radiative Transport, Neutron Transport, Spatial heterogeneity, Random Coefficients, Error Estimate, Multilevel Monte Carlo

\textbf{AMS Subject Classifications}: 65N12, 65R99, 65C30, 65C05 

\section{Introduction}
\label{sec:intro}

The Radiative Transport Equation (RTE) is a physically derived balance equation which models the angular flux $\psi$ of rarefied particles (such as photons or  neutrons) in a domain. Generally $\psi$ is a function of position, direction of travel, energy  and time (see e.g.  \cite{AdLa:02, BeGl:70, LeMi:84}). It is assumed that the particles cannot interact with each other and that they travel along straight line paths with some energy
until they interact with larger nuclei via absorption, scattering or fission.
The rates $\sigma_A$, $\sigma_S$ and $\sigma_F$ at which these collision events occur are called the \textit{absorption, scattering and fission cross-sections}. The RTE  has many applications, for example in radiation shielding, nuclear reactor design \cite{BeGl:70, SaMc:82},
astrophysics and optical tomography \cite{DuTuCh:10, Re:10}.

In the context of neutron transport, the two main scenarios of interest are the so-called \textit{fixed source problem} and the \textit{criticality problem}. We focus on the former, which concerns the transport and scattering of particles emanating from some fixed source  $f$.
In steady state, with constant energy, assuming isotropic scattering  and neglecting fission, the problem can be written as the integro-differential equation: 
\begin{align}
  \label{eq:fullnte}
\left[ \vec{\Theta} \cdot \nabla + \sigma(\vec{r}) \right] \psi(\vec{r},\vec{\Theta}) \ = \  
 \sigma_S(\vec{r}) \phi(\vec{r})  \ + \  f(\vec{r},\vec{\Theta})  \end{align}
with independent variables being angle   $\vec{\Theta} \in \mathbb{S}^2$ (the unit sphere in $3D$),  and position $\vec{r} \in V$ (the physical domain occupied by the reactor),  \igg{where $\psi(\vec{r},\vec{\Theta})$ is the angular flux,  and}   
\begin{align} \label{eq:scalar_flux} 
\phi(\vec{r}) \ :=\ \frac{1}{4 \pi} \int_{\mathbb{S}^2} \psi(\vec{r}, \vec{\Theta}) \rd \vec{\Theta}
  \end{align} 
  is the {\em scalar flux}.  Equation \eqref{eq:fullnte} requires boundary conditions and here we will restrict to the zero incoming flux condition
  \begin{align} \label{eq:fullbc}
    \psi(\vec{r}, \vec{\Theta}) = 0, \quad \text{when} \quad \vec{r} \in \partial V  \quad \text{and} \quad  \vec{n}(\vec{r}) \cdot \vec{\Theta} < 0, 
  \end{align}
  with $\vec{n}(\vec{r})$ denoting the outward  normal from $V$ at a point $\vec{r} \in \partial V$.  
The gradient $\nabla$ is with respect to $\vec{r}$ and the coefficient function $\sigma(\vec{r})$ is the \textit{total cross-section} defined by
\begin{equation} \label{eq:total_xsection}
\sigma(\vec{r}) \ = \ \sigma_A(\vec{r}) +  \sigma_S(\vec{r}) \ ,
\end{equation}
where $\sigma_S$ and $\sigma_A$ are, respectively, the scattering and
absorption cross-sections, both assumed to be non-negative.

In this paper we propose and analyse  efficient multilevel Monte Carlo methods for quantifying the effect of uncertainty in the {\em input data} $\sigma_A$, $\sigma_S$ and  $f$,  on the {\em output variable} $\phi$.   There is a growing recent interest in this question in the more general  context of kinetic equations.   
In the particular case of  nuclear applications,  our work is relevant to the assessment of how  material fluctuations  can affect the   
uncertainty of flux computations.

\medskip

\noindent {\bf Novel results in this paper.}
Since their introduction in the context of high-dimensional quadrature and SDEs in mathematical finance \cite{He:01, Gi:08},  multilevel Monte Carlo methods have generated a lot of interest. While uncertainty quantification recently
  has become a topic of  great general  interest for 
the transport equation in both theory and practice
e.g. \cite{JiLuPa:18, ZhLi:18}, to our knowledge \igg{the present}  work provides
the first rigorous analysis of multilevel Monte Carlo methods for this
problem. To allow the first results to be established, we make the simplifying assumption  of
one spatial and one angular dimension, the so-called ``slab-geometry'' case in reactor theory. We discretise with the classical 
discrete ordinates method, using a certain Gauss rule with $2N$ quadrature points in angle and classical diamond
differencing (or Crank-Nicolson) on a mesh with step-size $h$ in the spatial variable.
The resulting approximation of the scalar flux $\phi$  is denoted $\phi^{h,N}$ .

 Our first set of results describes how heterogeneity in the material coefficients manifests itself
in  the  operators underlying the RTE,   and consequently in the error estimate for the numerical method.
We  assume  that  the spatial domain can be partitioned into subintervals, on each of  which 
the input data $\sigma_S$, $\sigma_A$ and $f$ belongs to the H\"{o}lder space  $\C^\eta$,
for some {$\eta \in (0,1)$}. This allows for data with  low smoothness and permits jumps in material properties across interfaces. We denote this space by $\C_{pw}^\eta$ and equip it with the norm $\| \cdot \|_{\eta,pw}$ defined below.
Our first error estimate   is Theorem \ref{thm:phiMN}, which shows that there are  constants $\mathcal{R}, \mathcal{R}'$, both dependent on
$\sigma, \sigma_S$ such that, when
\begin{align}\label{eq:path_dep} 
   N^{-1} \ + \ h \log N \ + \ {h^{\eta}} \ \leq \ {\cR^{-1}},  
  \end{align}
 we have the error estimate for the scalar flux: 
\begin{equation} \label{eq:result1}
\| \phi - \phi^{h,N} \|_\infty \ \leq \ \mathcal{R}'(\sigma, \sigma_S) \left( N^{-1} \ + \ h \log N \ + \ h^{\eta} \right) \| f \|_{\eta,pw} \ .
\end{equation}
Both  $\mathcal{R}, \mathcal{R}'$ are  independent of $f$ and  their  dependence on $\sigma, \sigma_S$ is known explicitly, {but they blow up, e.g., if $\sigma/\sigma_S $ is close to $1$ anywhere} in the domain or if $\| \sigma \|_\infty\|\sigma^{-1} \|_\infty \to \infty$.
The proof of \eqref{eq:path_dep}, \eqref{eq:result1} is obtained by generalising the theory of the  integral equation reformulation of
\eqref{eq:fullnte} to the heterogeneous case; the  homogeneous case having been studied in detail in   \cite{PiSc:83}. An overview \igg{of the present work} was given in \cite{GrPaSc:17}.

The appearance of the  $h \log N$ term in \eqref{eq:result1}  reflects the fact that the transport equation in slab geometry has a singularity in its angular dependence (explained in \S \ref{sec:ModelProblem}). This imposes a compatibility constraint, which implies that the angular discretisation cannot be indefinitely refined if the  spatial discretisation is kept fixed. The appearance of this term in the
error estimate means that the accuracy of the method measured in $\Vert \cdot \Vert_\infty$ can be no better than $\mathcal{O}(h)$,  even if the cross-sections are very smooth. A faster rate is possible if one uses a higher order method or measures the error in $L_p$ norms, the latter proved for constant cross-sections in  \cite{PiSc:83}. However, we will not pursue this further and thus limit our analysis to piecewise H\"{o}lder continuous data ($\eta < 1$).
To ensure that the spatial and angular errors
are  equal order,  we set $N = N(h) =   \lceil h^{-\eta}\rceil $.

Our second set of results then  concerns the probabilistic counterpart of \eqref{eq:result1}.
Here we have to deal with the fact that the deterministic
estimate \eqref{eq:result1} is subject to the   ``mesh resolution condition'' \eqref{eq:path_dep}, which in turn arises from   
the  non-self-adjointness of  \eqref{eq:fullnte}.
In the case of coercive  self-adjoint PDEs with random data and Galerkin discretisation (e.g. \cite{Cl:11,  TeScGiUl:13}) one obtains a
probabilistic error estimate by interpreting  the deterministic error estimate  pathwise  and then taking expectation.
This does not work here because of the pathwise stability estimate  \eqref{eq:path_dep}.
To get around this problem, given a path independent mesh width $h<1$,
for each realisation $\sigma = \sigma(\cdot, \omega), \sigma_S =
\sigma_S(\cdot, \omega)$, we let $\hmax$ denote (the largest)  mesh
diameter which satisfies the path-dependent  criterion
\eqref{eq:path_dep} and then set $h_\omega = \min\{ h, \hmax \}$. Then the approximation to $\phi = \phi(\cdot, \omega)$ is taken to be $\Phi^h = \phi^{h_\omega, N(h_\omega)}$. We prove in Theorem \ref{thm:phiMNrand} that  
\begin{equation} \label{eq:result2}
\| \phi - \Phi^{h} \|_{L_p(\Omega;L_\infty)} \ \leq \ C_{p,r} \,  h^\eta \, \| f \|_{L_r(\Omega;\C_{pw}^\eta)} \ ,
\end{equation}
for any $1 \leq p \leq r$, provided the norm on the right-hand side is finite and the cross sections $\sigma$, $\sigma_S$ have bounded moments of any finite order. Here,  $C_{p,r}$ denotes an  absolute constant depending only on $p, r$ and the norms are the usual Bochner norms with respect to the probability space $\Omega$ (defined in \Cref{sec:numerics}). 
This result shows that the error in the Bochner norm on the left-hand
side decreases with deterministic rate $h^\eta$, provided we are
willing to use a finer mesh for any particular sample where the
stability criterion \eqref{eq:path_dep} demands it. If we assume furthermore that the cost $\cC(\cdot)$ to compute a single sample of $\Phi^h = \phi^{h_\omega, N(h_\omega)}$ (e.g. measured in floating point operations) satisfies
$$
\mathcal{C}(\phi^{h_\omega,N(h_\omega)}) \ \leq \ C'(\omega) h_\omega^{-\gamma} \ ,
$$
for some $\gamma > 0$, and that the sample-dependent constant $C'$ in that estimate is in $L_p(\Omega)$, for some $p>1$, then the third main result of this paper in \Cref{lem:expect_cost} is that
\begin{equation} \label{eq:result3}
\cE[ \mathcal{C}(\phistab) ] \ = \ \mathcal{O}\left( h^{-\gamma} \right) \ ,
\end{equation}
where the hidden constant is independent of $h$. The important observation is that on average the cost to compute a sample from $\phistab$ has the same cost growth rate (w.r.t. $h$) as the sample-wise cost (w.r.t. $h_\omega$), despite some samples $\phistab(\omega,x)$ being computed on a mesh with $h_\omega \ll h$ in order to satisfy the stability criterion.

Estimates, such as \eqref{eq:result2} and \eqref{eq:result3}, play a crucial role  in the complexity analysis of   (multilevel) Monte Carlo methods
for computing the expectation of (functionals of) the solution $\phi$ of \eqref{eq:fullnte}.  Suppose 
 $Q(\phi)$ is such a functional (often called a \textit{quantity of interest}) and to simplify notation we write this as
$Q$  (a random variable).
We approximate  $Q$ by  $Q_h : = Q(\Phi^h)$ with $\Phi^h$ described  above and then approximate   $\cE[Q]$, by applying a sampling method of choice to $\mathbb{E}[Q_h]$ -- we denote  the result as  $\widehat{Q}_h$. Finding an accurate and efficient estimator $\widehat{Q}_h$ of $\cE[Q]$ is at the heart of the forward problem of Uncertainty Quantification (UQ).

To compare methods in UQ, the \textit{computational $\epsilon$-cost} $\cC_{\epsilon}(\widehat{Q}_h)$ of an estimator $\widehat{Q}_h$ is often considered. If  $\epsilon$ denotes a desired accuracy (in the sense of root mean-squared error),
then $\cC_{\epsilon}(\widehat{Q}_h)$ is defined to be the total cost for $\widehat{Q}_h$
to achieve an accuracy of $\epsilon$. By a general theory
in~\cite{Cl:11}, the  $\epsilon$-cost of standard and multilevel Monte
Carlo methods can be computed in terms of the parameter $\eta$ in
\eqref{eq:result2} (related to the regularity of the data), the
parameter $\gamma$ in \eqref{eq:result3} (related to the cost per sample), as well as another parameter $\beta$ that quantifies the speed of variance reduction between levels of the multilevel scheme and can also be derived from \eqref{eq:result2}.  In the fourth main result of this paper in \Cref{prop:epsMC}, we prove rigorously that
\begin{equation} \label{eq:result4}
\beta \; \ge \; 2 \eta \ .
\end{equation}

To provide a bound on $\gamma$ that only depends on the regularity of the data  it is necessary to fix the solution method. Two particular examples that were used in our numerical results in \cite{GrPaSc:17} are given in \Cref{ex:solvers}. In particular, for the asymptotically cheaper one of the two methods, which is an iterative procedure called source iteration, we have $\gamma \le  1+\eta$. The general theory in \cite{Cl:11} then leads to the following respective upper bounds on the $\epsilon$-costs of the standard and the multilevel Monte Carlo estimators $\widehat{Q}_{h}^{MC}$ and $\widehat{Q}_{h}^{MLMC}$:
$$
\cE\Big[\cC_{\epsilon}(\widehat{Q}_{h}^{MC})\Big] \ = \ \cO \big( \epsilon^{-\left(4 + \frac{1-\eta}{\eta} \right)} \big)  \ \ \ \text{ and } \ \ \ \cE\Big[\cC_{\epsilon}(\widehat{Q}_{h}^{MLMC})\Big] \ = \ \cO \big( \epsilon^{- \left(2 + \frac{1-\eta}{\eta} \right)} \big)\;,
$$
i.e., a theoretical gain of up to two orders of magnitude in $\epsilon^{-1}$. However, we will see in the numerical section that this estimate of improvement is overly optimistic, since the bound on the $\epsilon$-cost of the standard Monte Carlo estimator is not sharp. Nevertheless, we do observe gains of (at least) one order of magnitude in practice.

\medskip

\noindent 
{\bf Related literature.} \ \
The numerical analysis of the RTE (and related integro-differential equation problems) dates back at least as far as the work of H.B. Keller \cite{Ke:60}. After a huge growth in the  mathematics   literature in the 1970's and 1980's,  progress has been slower  
since. This is perhaps surprising,  since  discontinuous Galerkin (DG) methods have enjoyed a massive recent renaissance and the neutron transport problem was one 
of the motivations behind the original introduction of DG \cite{ReHi:73}.  

The fundamental paper on the  analysis of the discrete ordinates
method  for the transport equation is
\cite{PiSc:83}, where a  full analysis of the combined effect of angular and spatial discretisation is given  under the assumption that the cross-sections are  constant. 
The delicate relation between spatial and angular discretisation parameters
required to  achieve stability and convergence is described there, and is also seen again in the present work (see the $h \log N$ term in \eqref{eq:result1}). 
Later research e.g. \igg{\cite{AsKuLa:92}}, \cite{As:98}, \cite{As:09}  
produced analogous results for models of increasing complexity  and 
in higher dimensions, but the proofs  were mostly confined to the case of cross-sections that are constant in space. 
A separate and related sequence of papers  (e.g. \cite{LaNe:82}, \cite{Vi:84},  and  \cite{AlViGa:89})
allow for variation in cross-sections, but error estimates explicit in this data  are not available there.
\igg{A method for tackling directly the integral equation reformulation of the RTE is given in \cite{FuRa:96}. Again the analysis is not explicit in the heterogeneity.   }

\igg{While  the  coefficient-explicit analysis given here can in principle be extended to higher spatial and angular dimensions (a  start is contained  in \cite{BlGrScSp:18}), it is clear that
  the details will be quite formidable, so we restrict here to the 1D case. However we note that  there is substantial contemporary interest in practical 3D  modelling in the heterogeneous case (generally without rigorous analysis). For example,  the recent thesis \cite{Gil:18} solves the  multigroup in energy approximation of the $P_N$ angular approximation of the transport equation  using a non-conforming
  spatial discretization for a highly heterogeneous reactor,  using  a domain decomposition approach.}

The field of UQ has grown very quickly in recent years and its application to radiative transport theory is currently of considerable interest. There are a number of groups that already work on UQ in radiative and neutron transport, e.g.  \cite{AyEa:15, Fi:11, Gi6:13} and references therein. 
The most recent research has focussed on using the polynomial chaos expansion (PCE), combined with a collocation method to estimate the coefficients in the expansion. The main disadvantage of standard PCE is that the number of terms grow exponentially in the number of stochastic dimensions and in the order of the PCE, the so-called \textit{curse of dimensionality}. A variety of techniques have been used to remedy this, including (adaptive) sparse grids  \cite{Gi6:13}, hybrid mixtures of polynomials \cite{AyPaEa:14} and expanding the quantity of interest in terms of low-dimensional subspaces of the stochastic variable \cite{AyEa:15}. We note however that none of these papers provide any rigorous error or cost analysis.

We also  note that there is a growing literature in the numerical analysis of  kinetic equations, of which the RTE is a particular example, with an emphasis on asymptotic preserving schemes which retain accuracy as the scattering ratio $\sigma_S/\sigma$ approaches unity. Interest in this question in the deterministic case goes back a long way, e.g. \cite{JiLi:91}, which has led to recent work on UQ in this context (e.g. \cite{ZhLiJi:16}).   Recently modern operator compression and adaptive techniques have been applied to efficiently attack  the high-dimensional aspects of the transport problem \cite{DaGrMu:18}.    
For further general discussion on the transport equation, see \cite{DaLi:12, LeMi:84}.
 
By contrast our work focusses on multilevel Monte Carlo and  sampling methods \cite{GrPaSc:17}.
Monte Carlo is inherently dimension independent and Quasi-Monte Carlo can be proved to be so under certain conditions, e.g. \cite{GrKu:15}. As far as we know, these methods have not been applied to radiative transport until now. Our  previous paper \cite{GrPaSc:17} gave an overview of this topic and also investigated the multilevel quasi-Monte Carlo method for the RTE. Further details,
are in
\cite{Pa:18}.

\medskip

\noindent 
{\bf Structure of paper and notation.} \ \ 
In Section 2, we introduce the model problem; the Radiative Transport equation in slab geometry with spatially heterogeneous cross-sections and its discretisation. To set up the error analysis, Section 3 describes the classical integral equation reformulation of the RTE under very weak smoothness assumptions on the cross-sections. From here we can prove results relating to the underlying operators and their regularity - that are explicit in the cross-sections. In Section 4, the elements are brought together to prove \eqref{eq:result1}.  We introduce uncertainty into the input data in Section 5, and we  extend the error estimate of Section 4 to the probabilistic error estimate \eqref{eq:result2} and subsequently prove \eqref{eq:result3}. Numerical results are given, with the cross-sections assumed to be log-normal random fields equipped with the Mat{\'e}rn class of covariances, and represented by a Karhunen-Lo{\`e}ve expansion.   An overview of the results of this paper, without  detailed analysis  was previously presented in \cite{GrPaSc:17}.  

\section{The Model Problem}
\label{sec:ModelProblem}

We study the \textit{mono-energetic 1D slab geometry problem}, for the angular flux $\psi(x,\mu)$: 
\begin{align} \label{eq:transport_det} 
\mu \frac{\partial \psi}{\partial x}(x,\mu) \ + \ \sigma(x) \psi(x,\mu)  &\ = \ \sigma_S(x) \phi(x) \ +  \ f(x) \ , \quad x \in (0,1), \quad \mu \in [-1,1] , \\
\label{eq:phi_det}
\text{where} \qquad \phi(x) &\ = \ \frac{1}{2} \int_{-1}^1 \psi(x,\mu') \ d \mu'
\end{align}
denotes the scalar flux, subject to zero incoming flux: 
\begin{equation}\label{eq:bc} 
\psi(0,\mu) \ = \ 0, \ \text{ for } \ \mu > 0 \quad \text{and} \quad \psi(1, \mu) = 0, \ \text{ for } \ \mu < 0 \ .
\end{equation}
The total cross-section $\sigma(x)$ is given by $\sigma = \sigma_S + \sigma_A$.
The problem  \eqref{eq:transport_det} --  \eqref{eq:bc} is obtained  from
\eqref{eq:fullnte} -- \eqref{eq:fullbc} when the
input data  is constant in two of the spatial dimensions (here
assumed to be $y$ and $z$).
Note that \eqref{eq:transport_det} degenerates at $\mu = 0$, which corresponds to   particles moving perpendicular to the $x$-direction.
 
\begin{notation}
When working on  the spatial domain $(0,1)$,  for $1 \le p \le
\infty$, we will denote the standard Lebesgue spaces
as $L_p$ with norm $\Vert \cdot \Vert_p$. 
  For any interval $I \subset [0,1]$, we denote by $\C(I)$ the space of uniformly continuous functions on $I$, equipped with norm $\|\cdot\|_\infty$. Any function $g \in \C(I)$ has a unique continuous
  extension to $\overline{I}$. For $0 < \xi \le 1$, we let $\C^\xi(I)$ denote the space of H\"older continuous functions on $I$ with H\"older exponent $\xi \in (0,1]$ and with norm $$\|g\|_{\C^\xi(I)} := \|g\|_\infty + \sup_{x,y \in I} \frac{|g(x) - g(y)|}{|x-y|^\xi}.$$
When $I=[0,1]$, we write for short $\C=\C(I)$, $\C^\xi = \C^\xi(I)$ and $\|f\|_{\xi}  = \|f\|_{\C^\xi(I)}$.
Finally,  for any normed spaces $X$ and $Y$, we write $\| . \|_{X \mapsto Y}$ to denote the operator norm of an operator mapping $X \mapsto Y$.
\end{notation}

In what follows, we will allow data which is piecewise  continuous with respect to an a priori defined  partition
  \begin{align} \label{eq:part} 0 = c_1 < ... < c_{\aleph} = 1, \end{align}
with $\aleph \geq 2$. 
  We denote the corresponding space of piecewise continuous functions by
$$
\C_{pw} := \big\{ g \in L_\infty[0,1]:  g|_{(c_j, c_{j+1})} \in C(c_j, c_{j+1}),  \ \text{for each} \ j=1,\ldots,\aleph-1 \big\} .
$$
For definiteness we will assume
that the value of $g(c_j)$ is taken to be the limit from the right for $j = 1, \ldots,  \aleph -1$ and the limit from the left for $j = \aleph$.  The space $C_{pw}$ is  equipped with the usual uniform norm $\Vert \cdot \Vert_{\infty}$.  
 Similarly, for any $\xi \in (0,1]$, let
$$
\C^\xi_{pw} := \big\{ g\in \C_{pw}: g|_{(c_j, c_{j+1})} \in \C^\xi(c_j,c_{j+1}), \ \text{for each} \ j=1,\ldots, \aleph-1 \big\}
$$
with norm $\|g\|_{\xi,pw} := \max_{j=1}^{\aleph} \| g \|_{\C^\xi(c_j,c_{j+1})}$.

We now make the following physically motivated assumptions on the data. 

\begin{assumption}{(Input Data)}
\label{ass:cross}
\begin{enumerate}
\item The cross-sections $\sigma_S$ and $\sigma_A$ are strictly positive and bounded above. We write 
$$\sgmin = \min_{x\in[0,1]} \sigma(x),  \ \ \sgmax = \max_{x\in[0,1]} \sigma(x),  \ \ (\sigma_S)_{\min} = \min_{x\in[0,1]} \sigma_S(x)  \ \text{and} \  \ (\sigma_S)_{\max} = \max_{x\in[0,1]} \sigma_S(x).$$
\item There exists a partition \eqref{eq:part} and {$\eta \in (0,1]$}, such that  $\sigma, \sigma_S, f \in \C^\eta_{pw}$.
\end{enumerate}
\end{assumption}

\medskip

\subsection{Discretisation}
\label{sec:quadrule}

To discretise \eqref{eq:transport_det} -- \eqref{eq:bc} in angle, we use a  $2N$-point quadrature rule 
\begin{equation}
\label{eq:quad_generic}
\int_{-1}^1 g(\mu) d\mu \approx \sum_{|k|=1}^N w_k g(\mu_k)\,,
\end{equation}
with nodes $\mu_k \in [-1,1]\backslash\{0\}$ and positive weights $w_k \in \mathbb{R}$. We assume the (anti-) symmetry properties $\mu_{-k} = -\mu_k$ and $w_{-k} = w_k$. To discretise in space, we introduce a mesh  
\begin{equation}
\label{eq:mesh}
0 = x_0 < x_1 < \ldots < x_M = 1,   
\end{equation}
which is assumed to  resolve the break points $\{ c_j\}$ introduced in \eqref{eq:part}. \igg{We set
  $ h_j = x_j - x_{j-1}$.}  Further assumptions on the quadrature rule and mesh will be added in Section \ref{sec:fulldiscrete}.

Our discrete scheme for \eqref{eq:transport_det} -- \eqref{eq:bc} is then
\begin{equation} \label{eq:fulldisc}
\mu_k  \frac{ \psi_{k,j}^{h,N} - \psi_{k,j-1}^{h,N}}{h_j} \ + \ \sigma_{j-1/2} \frac{\psi_{k,j}^{h,N} + \psi_{k,j-1}^{h,N}}{2} \ = \ \sigma_{S,j-1/2} \phi_{j-1/2}^{h,N} \ + \ f_{j-1/2}   \ ,
\end{equation}
for $j = 1,...,M, \ |k| = 1,\ldots,N$, where 
\begin{equation}
\label{eq:phi_fulldisc}
\phi_{j-1/2}^{h,N}\ = \ \frac{1}{2} \sum_{|k| = 1}^N w_k \frac{\psi_{k,j}^{h,N} \ + \ \psi_{k,j-1}^{h,N}}{2} \ , \ \ j = 1,...,M \ ,
\end{equation}
and with
\begin{align} \label{eq:disc_bc} \psi_{k,0}^{h,N} = 0, \quad \text{for} \quad  k >0 \quad \text{and} \quad  \psi_{k,M}^{h,N} = 0, \quad
  \text{for} \quad k <0\ .  \end{align}
Here $\sigma_{j-1/2}$ denotes the value of $\sigma$ at the mid-point of the interval $I_j = (x_{j-1}, x_j)$, with the analogous  meaning for   
$\sigma_{S,j-1/2}$ and $f_{j-1/2}$. 

\subsection{Abstract form  of {the} method} \label{sec:opform}

As preparation for analysing \eqref{eq:transport_det} -- \eqref{eq:bc} and its discretisation, 
\eqref{eq:fulldisc} -- \eqref{eq:disc_bc}, consider first the \textit{pure transport problem}: For fixed $\mu \in [-1,1]$, find $u = u(x), \, x \in (0,1)$,  such that
\begin{equation} \label{eq:ptr}  \mu \frac{d u}{d x} + \sigma u = g, \quad  \text{with} \ u(0) = 0, \ \text{when} \ \mu > 0 \quad  \text{and}\  u(1) = 0 \ \text{when}\  \mu < 0,  \end{equation} 
with $g \in L_\infty$  a generic right-hand side. (Note that $u$
depends on $\mu$, but we suppress this in the notation. When $\mu =0$ no boundary condition is needed.) It is easy to show that the
unique solution of this problem is $u := \cSmu g$, where 
\begin{align}
 \cS_\mu g(x)  \ &= \ \left\{ \begin{array}{lr} \ \ \mu^{-1} \int_0^x \exp\left(\mu^{-1} \tau(x,y) \right) g(y) \ d y \ , & \ \mu > 0 \\
 \ \ \sigma^{-1}(x) \ g(x) \ , & \ \mu = 0 \\
- \mu^{-1} \int_x^1 \exp\left(\mu^{-1} \tau(x,y) \right) g(y) \ d y \ , & \ \mu < 0 \end{array} \right. \ ,
\label{eq:solop}
\end{align} 
and 
\begin{equation}
\label{eq:tau}
\tau(x,y) \ := \ \int_x^y \sigma(s) \ d s \ . 
\end{equation}
The quantity $\vert \tau(x,y) \vert$ is often called the `optical length' or `optical path' \cite{BeGl:70}. 
To mimic  the averaging process in \eqref{eq:phi_det} it is natural to also consider the  integral operator:  
\begin{equation}
\label{eq:Kdef}
\cK g(x) \ := \ \frac{1}{2} \int_{-1}^1 \cS_\mu g(x) \ d \mu \ = \ \frac{1}{2} \int_0^1 E_1(\vert\tau(x,y)\vert ) g(y) \ d y \ ,
\end{equation}
where for $z > 0$, $E_1(z)$ is the exponential integral 
\begin{equation} \label{eq:exp_integral}
E_1(z) \ := \ \int_1^\infty \exp(-t z) \frac{\rd t}{t} \ = \ \int_0^1 \exp(-z/s) \frac{\rd s}{s} \ . 
\end{equation}
The operators $\cSmu$ and $\cK$ relate to \eqref{eq:transport_det} --  \eqref{eq:bc} by the following proposition.

\begin{proposition}
\label{prop:ie}
Let $\psi$ be {a} solution to \eqref{eq:transport_det} --  \eqref{eq:bc}. Then,
\begin{equation}
\label{eq:phiprop}
\psi(x,\mu) \ = \ \cSmu \left( \sigma_S \phi \ + \ f \right)(x) 
\end{equation}
and hence, $\phi$ solves the integral equation
\begin{equation}
\label{eq:ie}
\phi \ = \ \cK \left( \sigma_S \phi \ + \ f \right) \ .
\end{equation}
\end{proposition}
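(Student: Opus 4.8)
The plan is to verify directly that the transport solution $\psi$ satisfies the claimed representation \eqref{eq:phiprop}, and then to integrate in $\mu$ to obtain the integral equation \eqref{eq:ie}. The key observation is that, for each fixed $\mu \in [-1,1]$, equation \eqref{eq:transport_det} is exactly the pure transport problem \eqref{eq:ptr} with right-hand side $g = \sigma_S \phi + f$: indeed, \eqref{eq:transport_det} reads $\mu\,\partial_x\psi(\cdot,\mu) + \sigma\,\psi(\cdot,\mu) = \sigma_S\phi + f$, and the boundary condition \eqref{eq:bc} matches that of \eqref{eq:ptr} ($\psi(0,\mu)=0$ for $\mu>0$, $\psi(1,\mu)=0$ for $\mu<0$). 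Since $\phi$ is a fixed function of $x$ (it does not depend on $\mu$) and $\sigma_S, f \in \C_{pw} \subset L_\infty$ by Assumption \ref{ass:cross}, the right-hand side $g = \sigma_S\phi + f$ lies in $L_\infty$, so the hypotheses under which $\cSmu$ was defined apply. By the stated uniqueness of the solution to \eqref{eq:ptr} — which one checks by the integrating-factor computation that produced \eqref{eq:solop}, the exponent being $\mu^{-1}\tau(x,y)$ as in \eqref{eq:tau} — we get $\psi(x,\mu) = \cSmu(\sigma_S\phi + f)(x)$ for all $\mu$. The $\mu=0$ case is consistent: setting $\mu=0$ in \eqref{eq:transport_det} gives $\sigma(x)\psi(x,0) = \sigma_S\phi + f$, i.e. $\psi(x,0) = \sigma^{-1}(x)(\sigma_S\phi+f)(x) = \cS_0(\sigma_S\phi+f)(x)$, again matching \eqref{eq:solop}.

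For the second assertion, I would apply $\tfrac12\int_{-1}^1 (\cdot)\,d\mu$ to \eqref{eq:phiprop}. On the left this yields $\phi$ by the definition \eqref{eq:phi_det}. On the right, since $\sigma_S\phi + f$ is independent of $\mu$, one brings the $\mu$-integral inside and uses the definition \eqref{eq:Kdef} of $\cK$ together with the identity $\tfrac12\int_{-1}^1 \cSmu g\,d\mu = \tfrac12\int_0^1 E_1(|\tau(x,y)|)g(y)\,dy$ that is recorded there; this gives $\phi = \cK(\sigma_S\phi + f)$, which is \eqref{eq:ie}. The single point $\mu=0$ has measure zero and does not affect the integral.

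The only genuine technical point — and the step I would be most careful about — is justifying the interchange of the $\mu$-integral with the $y$-integral inside $\cSmu$, i.e. that the formula $\tfrac12\int_{-1}^1\cSmu g\,d\mu = \cK g$ used in \eqref{eq:Kdef} is valid for $g\in L_\infty$. This is a Fubini argument: for $\mu>0$ the kernel $\mu^{-1}\exp(\mu^{-1}\tau(x,y))\mathbf{1}_{\{y<x\}}$ is nonnegative (note $\tau(x,y)\le 0$ for $y<x$), and integrating it in $\mu$ over $(0,1]$ produces exactly $\tfrac12 E_1(|\tau(x,y)|)$ via the substitution $s=\mu$ in the second representation of $E_1$ in \eqref{eq:exp_integral} — after rescaling by the lower integration bound $|\tau(x,y)| = \sigma_{\min}$-type bounds; more precisely one uses $E_1(z)=\int_0^1 e^{-z/s}\,ds/s$ with $z=|\tau(x,y)|$. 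Since the integrand has a fixed sign, Tonelli's theorem licenses the interchange with no integrability concerns, and the finiteness of $\int_0^1 E_1(|\tau(x,y)|)\,dy$ follows from the known integrability of $E_1$ near $0$ (it has only a logarithmic singularity). The symmetric computation handles $\mu<0$. Everything else is bookkeeping, so I would keep the write-up short, essentially as above.
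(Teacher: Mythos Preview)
The paper states Proposition~\ref{prop:ie} without proof, treating it as an immediate consequence of the definitions of $\cSmu$ and $\cK$; your argument is exactly the natural verification the authors implicitly rely on, and it is correct. Your extra care with the Tonelli step behind the identity in \eqref{eq:Kdef} is appropriate (and more than the paper spells out), though the phrase ``after rescaling by the lower integration bound $|\tau(x,y)| = \sigma_{\min}$-type bounds'' is garbled and should be cleaned up: the substitution is simply $s=\mu$ in the second representation of $E_1$ in \eqref{eq:exp_integral}, with $z=|\tau(x,y)|$, and no rescaling is needed.
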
 
We shall see later that \eqref{eq:ie} has a unique solution and this ensures that \eqref{eq:transport_det} --  \eqref{eq:bc}  has a unique solution.
Analogously we can consider the discrete system  \eqref{eq:fulldisc} -- \eqref{eq:disc_bc}. Let  $V^h$ denote the space of
continuous piecewise-linear functions with respect to the mesh $\{x_j\}_{j=0}^M$, and for any $v \in \C$, let   $\cPh v  $ denote the piecewise constant function which interpolates $v$ at the mid-points of subintervals.  Then consider the discretisation of \eqref{eq:ptr} defined by seeking  $u^h \in V^h$ to satisfy  
\begin{equation} \label{eq:ptr_approx}
\int_{I_j} \left(\mu \frac{\rd u^h }{\rd x } + \wsigma \ u^h \right)\ = \ \int_{I_j} g \  , \quad \text{with} \quad 
I_j = (x_{j-1}, x_j) \ , \quad j = 1, \ldots, M \  ,
\end{equation}
with $u^h(0) = 0$ when $\mu > 0$ and $u^h(1) = 0$ when $\mu < 0$. This has a unique solution, which we write as
$u^h = \cS_\mu^h g$. Analogously to \eqref{eq:Kdef} we also define
\begin{equation}
\label{eq:KMNdef}
\cK^{h,N} g \ = \ \frac{1}{2} \sum_{|k|=1}^N w_k \ \cS^h_{\mu_k} g \ . 
\end{equation}

Identifying  any fully discrete  solution  $\psi_{k,j}^{h,N}$ of \eqref{eq:fulldisc} -- \eqref{eq:disc_bc} with
the function $\psi_{k}^{h,N} \in  V^h$ by interpolation at the nodes $\{x_j\}$,
we can see that \eqref{eq:fulldisc} -- \eqref{eq:disc_bc}  is equivalent to seeking  $\psi_k^{h,N} \in V^h$,\  $|k| = 1,...,N$,  that satisfy  
\begin{equation} \label{eq:rte_approx}
\int_{I_j} \left(\mu_k \frac{\rd \psi_k^{h,N} }{\rd x } + \wsigma \ \psi_k^{h,N} \right)\ = \ \int_{I_j} \cPh \left( \sigma_S \phi^{h,N} + f \right) \  , \quad j = 1, \ldots, M \  ,
\end{equation}
where 
\begin{align}\label{eq:rte_approxphi}
\phi^{h,N} \ = \ \frac{1}{2} \sum_{|k|=1}^N w_k \psi_k^{h,N} 
\end{align}
and
\begin{align} \label{eq:rte_approxbc}  \psi_k^{h,N} (0) =0 \quad \text{when } \quad k > 0 \quad \text{and} \quad \psi_k^{h,N} (1) =0 \quad \text{when } \quad k < 0  . \end{align}

We then have the discrete analogue of Proposition \ref{prop:ie}:  
\begin{proposition} \label{prop:ie_disc}
  The  system \eqref{eq:rte_approx} -- \eqref{eq:rte_approxbc}  is equivalent to \eqref{eq:fulldisc} -- \eqref{eq:disc_bc}, and its solution can be written: 
\begin{equation} \label{eq:fd_psi}
\psi_{k}^{h,N} \ = \  \cS_{\mu_k}^h \cPh (\sigma_S \phi^{h,N} + f) \ , \quad \ |k| = 1,...,N \ .
\end{equation}
Moreover, 
\begin{equation}
\label{eq:fullie}
\phi^{h,N} \ = \ \cK^{h,N} \cPh (\sigma_S \phi^{h,N} \ + \ f) \ . 
\end{equation}
\end{proposition}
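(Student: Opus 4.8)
The plan is to prove the three claims in order, mimicking the derivation of Proposition \ref{prop:ie} at the discrete level. First I would establish the equivalence of the finite-element-type system \eqref{eq:rte_approx}--\eqref{eq:rte_approxbc} with the finite-difference scheme \eqref{eq:fulldisc}--\eqref{eq:disc_bc}. Since $\psi_k^{h,N} \in V^h$ is continuous piecewise linear, on each interval $I_j = (x_{j-1},x_j)$ its derivative is the constant $(\psi_{k,j}^{h,N} - \psi_{k,j-1}^{h,N})/h_j$, so $\int_{I_j} \mu_k \, \mathrm{d}\psi_k^{h,N}/\mathrm{d}x = \mu_k(\psi_{k,j}^{h,N} - \psi_{k,j-1}^{h,N})$. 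For the zeroth-order term, $\wsigma$ is constant on $I_j$ and equals $\sigma_{j-1/2}$, while $\int_{I_j}\psi_k^{h,N} = h_j(\psi_{k,j}^{h,N} + \psi_{k,j-1}^{h,N})/2$ by the trapezoidal rule (exact for linear integrands), giving $\sigma_{j-1/2}\,h_j(\psi_{k,j}^{h,N} + \psi_{k,j-1}^{h,N})/2$. Similarly the right-hand side $\int_{I_j}\cPh(\sigma_S\phi^{h,N}+f)$ equals $h_j$ times the midpoint value, namely $h_j(\sigma_{S,j-1/2}\phi_{j-1/2}^{h,N} + f_{j-1/2})$, provided one checks that $\phi^{h,N}$ evaluated via \eqref{eq:rte_approxphi} and interpolated, then sampled at the midpoint, reproduces $\phi_{j-1/2}^{h,N}$ of \eqref{eq:phi_fulldisc} — this follows because $\psi_k^{h,N}(x_{j-1/2}) = (\psi_{k,j}^{h,N}+\psi_{k,j-1}^{h,N})/2$ by linearity. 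Dividing through by $h_j$ recovers \eqref{eq:fulldisc}, and the boundary conditions \eqref{eq:rte_approxbc} and \eqref{eq:disc_bc} match directly.

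Next I would prove the representation \eqref{eq:fd_psi}. By the equivalence just shown, each $\psi_k^{h,N}$ solves \eqref{eq:ptr_approx} with $\mu = \mu_k$ and right-hand side $g = \cPh(\sigma_S\phi^{h,N}+f)$; note that $\mu_k \neq 0$ by the assumption $\mu_k \in [-1,1]\setminus\{0\}$, so no degenerate case arises. Since \eqref{eq:ptr_approx} was stated to have a unique solution denoted $\cS_\mu^h g$, we conclude $\psi_k^{h,N} = \cS_{\mu_k}^h \cPh(\sigma_S\phi^{h,N}+f)$, which is \eqref{eq:fd_psi}. Here one should be slightly careful that the right-hand side $\sigma_S\phi^{h,N}+f$ is indeed an admissible argument for $\cPh$: $\phi^{h,N} \in V^h \subset \C$, and $\sigma_S, f$ need only be evaluated at the midpoints $x_{j-1/2}$, which is harmless since $\cPh$ only samples midpoint values; I would note this rather than invoke any regularity.

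Finally, \eqref{eq:fullie} follows by substituting \eqref{eq:fd_psi} into the definition \eqref{eq:rte_approxphi} of $\phi^{h,N}$ and using the definition \eqref{eq:KMNdef} of $\cK^{h,N}$:
\begin{equation*}
\phi^{h,N} = \frac{1}{2}\sum_{|k|=1}^N w_k\, \cS_{\mu_k}^h \cPh(\sigma_S\phi^{h,N}+f) = \cK^{h,N}\cPh(\sigma_S\phi^{h,N}+f),
\end{equation*}
where linearity of $\cS_{\mu_k}^h$ and of the finite sum lets the operator be pulled outside. I do not expect a genuine obstacle here; the proof is essentially bookkeeping. The one step that needs the most care is the first — verifying that the midpoint/trapezoidal identifications are exact and that the piecewise-constant projection $\cPh$ on the right-hand side of \eqref{eq:rte_approx} produces exactly the midpoint samples appearing in \eqref{eq:fulldisc}, in particular that the scalar flux bookkeeping in \eqref{eq:phi_fulldisc} versus \eqref{eq:rte_approxphi}--\eqref{eq:phi_fulldisc} is consistent. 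Everything else is a direct transcription of the continuous argument in Proposition \ref{prop:ie}.
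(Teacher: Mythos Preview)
The paper does not supply an explicit proof of Proposition~\ref{prop:ie_disc}; the equivalence is asserted in the text preceding the proposition (``we can see that \eqref{eq:fulldisc}--\eqref{eq:disc_bc} is equivalent to \ldots'') and the representations \eqref{eq:fd_psi}, \eqref{eq:fullie} are stated without further argument. Your proposal correctly fills in the routine bookkeeping the paper omits---exact integration of the piecewise-linear $\psi_k^{h,N}$ and the piecewise-constant $\cPh$-terms over each $I_j$, the identification of $\phi^{h,N}(x_{j-1/2})$ with $\phi_{j-1/2}^{h,N}$, and the definitions of $\cS_{\mu}^h$ and $\cK^{h,N}$---and there is no substantive difference in approach.
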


Now, to estimate the error in our approximation to $\phi$, we use  \eqref{eq:ie} and \eqref{eq:fullie} to obtain 
\begin{equation} 
\phi - \phi^{h,N} \ = \ ( I -\mathcal{K}^{h,N} \cPh \sigma_S )^{-1} (\mathcal{K} - \mathcal{K}^{h,N}\cPh) ( \sigma_S \phi + f) \ . \label{eq:IE3}
\end{equation} 
We prove   later in
\eqref{eq:pstab} that $(I - \cK^{h,N}\cP^h \sigma_S)^{-1}$ is bounded on $\C$. Hence,
\begin{equation} 
\Vert \phi - \phi^{h,N}\Vert_\infty \ \leq \ \Vert ( I -\mathcal{K}^{h,N} \cPh \sigma_S )^{-1}\Vert_{\C \mapsto \C}  \Vert (\cK - \cK^{h,N} \cPh) ( \sigma_S \phi + f)\Vert_\infty \ .  \label{eq:IE4} 
\end{equation}
In \S \ref{sec:fulldiscrete}, we use \eqref{eq:IE4} to obtain a
data-explicit error estimate for $\phi - \phi^{h,N}$. First, we  prove
a number of  data-explicit properties of the operator $\cK$ which will be needed later. 

\section{Properties of the Operators}
\label{sec:analytic}

\begin{notation}\label{not:bar} 
To simplify presentation, for any $a \in \mathbb{R}$, we will use the notation $\brov{a} := \max \lbrace 1, a \rbrace$ and
    $\brun{a} : = \min\{a,1\}$. Also, from now on, we will use $c$ to denote a constant that is positive, finite and independent of the cross-sections, mesh parameters and other relevant variables.
\end{notation}

We will make use of the following bounds, a consequence of Assumptions \ref{ass:cross} and \eqref{eq:tau},
\begin{equation}
\label{eq:taubnd}
\sgmin | y - x | \leq \sgn(y - x) \tau(x,y) \ = \ \vert \tau(x,y) \vert \  \leq \sgmax | y - x | \ ,
\end{equation}
where $\text{sgn}(\cdot) = 1$, when its argument is positive, and $(-1)$ when negative.

\begin{lemma} 
\label{lem:Smustab}
For $\mu \in [-1,1]$, then
$\|  \cSmu \|_{\cLLinf} \leq \sgmin^{-1} \ .$
\end{lemma}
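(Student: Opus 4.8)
The plan is to estimate $\|\cSmu g\|_\infty$ directly from the explicit representation \eqref{eq:solop}, handling the three cases $\mu>0$, $\mu=0$, $\mu<0$ separately, and then to observe that the bound is uniform in $\mu$. For the case $\mu>0$, I would start from
$$
\cSmu g(x) \ = \ \mu^{-1} \int_0^x \exp\!\left(\mu^{-1}\tau(x,y)\right) g(y)\ \rd y,
$$
take absolute values inside the integral, and bound $|g(y)| \le \|g\|_\infty$. The key point is that for $\mu>0$ and $0\le y\le x$ we have $\tau(x,y) = \int_x^y \sigma(s)\,\rd s \le 0$, so $\exp(\mu^{-1}\tau(x,y)) = \exp(-\mu^{-1}|\tau(x,y)|) \le 1$; more usefully, using the lower bound in \eqref{eq:taubnd}, $|\tau(x,y)| \ge \sgmin(x-y)$, so $\exp(\mu^{-1}\tau(x,y)) \le \exp(-\mu^{-1}\sgmin(x-y))$. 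Then
$$
|\cSmu g(x)| \ \le \ \|g\|_\infty\, \mu^{-1}\int_0^x \exp\!\left(-\mu^{-1}\sgmin(x-y)\right)\rd y \ \le \ \|g\|_\infty\, \mu^{-1}\int_0^\infty \exp\!\left(-\mu^{-1}\sgmin t\right)\rd t \ = \ \|g\|_\infty\, \sgmin^{-1},
$$
after the substitution $t = x-y$ and evaluating the elementary integral. The case $\mu<0$ is symmetric: for $x\le y\le 1$ one has $\tau(x,y)\ge 0$ and $\tau(x,y)\ge \sgmin(y-x)$, so $\exp(\mu^{-1}\tau(x,y)) = \exp(-|\mu|^{-1}\tau(x,y)) \le \exp(-|\mu|^{-1}\sgmin(y-x))$, and the same substitution and integral evaluation give the bound $\|g\|_\infty \sgmin^{-1}$. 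The case $\mu=0$ is immediate: $|\cS_0 g(x)| = \sigma^{-1}(x)|g(x)| \le \sgmin^{-1}\|g\|_\infty$.

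Since each of the three cases yields $\|\cSmu g\|_\infty \le \sgmin^{-1}\|g\|_\infty$ with a constant independent of $\mu$, taking the supremum over $x\in[0,1]$ and then over $g$ with $\|g\|_\infty\le 1$ gives $\|\cSmu\|_{\cLLinf}\le \sgmin^{-1}$ for every $\mu\in[-1,1]$, which is the claim.

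There is no real obstacle here — this is a routine a priori estimate. The only point requiring a little care is keeping the sign conventions straight: $\tau(x,y)$ as defined in \eqref{eq:tau} is negative when $y<x$, so in the $\mu>0$ branch the exponent $\mu^{-1}\tau(x,y)$ is indeed negative (decay), and one must invoke the lower bound $\sgmin|y-x|\le|\tau(x,y)|$ from \eqref{eq:taubnd} in the correct direction to dominate the exponential by a clean exponential in $|x-y|$. Once that is set up, the estimate is a one-line calculation in each branch, and the uniformity in $\mu$ is manifest because the bound $\mu^{-1}\int_0^\infty e^{-\mu^{-1}\sgmin t}\,\rd t = \sgmin^{-1}$ does not depend on $\mu$.
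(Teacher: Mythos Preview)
Your proof is correct and follows essentially the same approach as the paper: bound $|g|$ by $\|g\|_\infty$, use \eqref{eq:taubnd} to dominate the exponential kernel by $\exp(-\mu^{-1}\sgmin|x-y|)$, and integrate explicitly. The paper evaluates the finite integral $\int_0^x$ directly rather than extending to $[0,\infty)$ after substitution, and it omits the explicit treatment of the $\mu=0$ and $\mu<0$ cases that you include, but these are cosmetic differences.
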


\begin{proof}
Consider $\mu > 0$ and a function $g \in L_\infty$. By definition,
\begin{align*}
| \cSmu g(x) | &= \bigg \lvert \mu^{-1} \int_0^x \exp( \mu^{-1} \tau(x,y) ) g(y) \ d y \bigg \rvert \leq \| g \|_{\infty} \mu^{-1} \int_0^x \exp( \mu^{-1} \tau(x,y) ) \ d y \\
&\leq \| g \|_{\infty} \mu^{-1} \int_0^x \exp( \mu^{-1} \sgmin (y - x) ) \ d y \\
&\leq \| g \|_{\infty} \sgmin^{-1} \left[ \exp( \mu^{-1} \sgmin (y - x) ) \right]_{y=0}^{y=x} \ \leq \ \| g \|_{\infty} \sgmin^{-1} \ ,
\end{align*}
where we have used \eqref{eq:taubnd}. The proof for $\mu < 0$ is similar.
\end{proof}

In the following two Lemmas we study the differentiability of $\cSmu$ with respect to $x$ and $\mu$. Through \eqref{eq:phiprop}, this relates directly to the differentiability of the angular flux $\psi$ and hence will be fundamental to the convergence rate of the deterministic error estimates in Section \ref{sec:fulldiscrete}.

\begin{lemma} 
\label{lem:SmuXderiv}
$$
\bigg \| \frac{\partial}{\partial x} \cSmu \bigg \|_{\cLLinf} \ \leq \ 2 \left( \frac{\sgmax}{\sgmin}\right) |\mu|^{-1} \ ,  \quad \text{ for all } \ \mu \in [-1,1] \backslash \{ 0 \} \ .
$$
Moreover, $\cSmu: L_\infty \mapsto \C$ with $\| \cSmu \|_{L_\infty \mapsto \C} \leq \sgmin^{-1}$, for $\mu \in [-1,1] \backslash \{ 0 \}$.
\end{lemma}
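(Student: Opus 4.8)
The plan is to exploit the fact that $u := \cSmu g$ is, by construction, the unique solution of the transport ODE $\mu u' + \sigma u = g$, so that its $x$-derivative can simply be read off algebraically rather than estimated from scratch; the regularity statement $\cSmu : L_\infty \mapsto \C$ will then follow almost for free.

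First I would fix $\mu \in [-1,1]\backslash\{0\}$ and $g \in L_\infty$, and differentiate the explicit formula \eqref{eq:solop} with respect to $x$. For $\mu>0$ this produces a boundary term from the variable endpoint of integration, contributing $\mu^{-1} g(x)$ since $\tau(x,x)=0$ by \eqref{eq:tau}, together with a term from differentiating $\exp(\mu^{-1}\tau(x,y))$ through $\partial_x \tau(x,y) = -\sigma(x)$. At a.e.\ $x$ (every Lebesgue point of $g$ and every continuity point of $\sigma$; recall $\sigma,g \in \C^\eta_{pw}$ by Assumption \ref{ass:cross}) this gives
\[
\frac{\partial}{\partial x}\cSmu g(x) \ = \ \mu^{-1}\bigl(g(x) - \sigma(x)\,\cSmu g(x)\bigr),
\]
i.e.\ just the defining ODE rearranged; the case $\mu<0$ is the mirror image and yields the same identity. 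Taking absolute values, using $\|\sigma\|_\infty = \sgmax$ and Lemma \ref{lem:Smustab},
\[
\Bigl|\frac{\partial}{\partial x}\cSmu g(x)\Bigr| \ \le \ |\mu|^{-1}\bigl(\|g\|_\infty + \sgmax\,\sgmin^{-1}\|g\|_\infty\bigr) \ = \ |\mu|^{-1}\Bigl(1 + \tfrac{\sgmax}{\sgmin}\Bigr)\|g\|_\infty,
\]
and since $\sgmax \ge \sgmin > 0$ we have $1 \le \sgmax/\sgmin$, hence $1 + \sgmax/\sgmin \le 2\sgmax/\sgmin$, which is the first bound.

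For the second claim I would substitute $\tau(x,y) = \Sigma(y) - \Sigma(x)$, where $\Sigma(x) := \int_0^x \sigma(s)\,ds$, so that for $\mu>0$
\[
\cSmu g(x) \ = \ \mu^{-1}\, e^{-\mu^{-1}\Sigma(x)} \int_0^x e^{\mu^{-1}\Sigma(y)}\, g(y)\,dy ,
\]
and analogously for $\mu<0$. Each factor is Lipschitz and bounded on $[0,1]$ ($\Sigma$ is Lipschitz since $\sigma\in L_\infty$; $t \mapsto e^{-\mu^{-1}t}$ is $C^1$, hence Lipschitz on the bounded range of $\Sigma$; the integral of a bounded function is Lipschitz), so $\cSmu g$ is Lipschitz, in particular uniformly continuous on $[0,1]$, i.e.\ $\cSmu g \in \C$. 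Since the $\C$-norm is by definition $\|\cdot\|_\infty$, the bound $\|\cSmu\|_{L_\infty \mapsto \C} \le \sgmin^{-1}$ is then exactly the content of Lemma \ref{lem:Smustab}.

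The only delicate point — the ``hard part'' — is justifying the differentiation under the integral sign when $\sigma$ and $g$ are merely bounded and piecewise H\"older: the pointwise derivative formula holds only a.e., but that is enough for the stated $\cLLinf$ operator-norm bound. If one prefers to avoid this altogether, the additive identity $\tau(x',y) = \tau(x',x) + \tau(x,y)$ (equivalently the $\Sigma$-substitution above) lets one estimate the difference quotient $[\cSmu g(x') - \cSmu g(x)]/(x'-x)$ directly, which simultaneously delivers the Lipschitz continuity used for the second claim. Beyond bookkeeping with the sign of $\mu$, there is no genuine obstacle.
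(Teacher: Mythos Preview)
Your proof is correct and follows essentially the same route as the paper. The paper also applies the Leibniz rule to \eqref{eq:solop} to obtain the identity $(\cSmu g)'(x) = \mu^{-1}\bigl(g(x) - \sigma(x)\,\cSmu g(x)\bigr)$ (written out with the integral rather than as $\sigma(x)\cSmu g(x)$, but it is the same expression), bounds it by $\mu^{-1}(1+\sgmax/\sgmin)\|g\|_\infty$ via Lemma~\ref{lem:Smustab}, and then deduces $\cSmu g \in \C$ from boundedness of the derivative together with Lemma~\ref{lem:Smustab}; your $\Sigma$-substitution for the continuity part is a slightly more careful variant of the same idea, making the Lipschitz property explicit rather than appealing to a.e.\ bounded derivative.
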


\begin{proof}
Consider $\mu > 0$ and some $g \in L_\infty$. Applying the Leibniz integral rule to \eqref{eq:solop}, 
\begin{align*}
  (\cSmu g)'(x)
&= \mu^{-1} \left( g(x)  - \mu^{-1} \sigma(x) \int_0^x \exp( \mu^{-1} \tau(x,y) ) g(y) \ d y  \right) 
\end{align*}
and thus 
\begin{align*}
 \lvert (\cSmu g)'(x) \rvert &\leq \mu^{-1} \left( | g(x) |  + \mu^{-1} \sgmax \int_0^x \exp( \mu^{-1} \tau(x,y) ) | g(y) | \ d y  \right) \\
&\leq \mu^{-1} \| g \|_{\infty} \left( 1  + \mu^{-1} \sgmax \int_0^x \exp( \mu^{-1} \tau(x,y) ) \ d y  \right) \\
&\leq \mu^{-1} \| g \|_{\infty} \left( 1  + \sgmax \sgmin^{-1} \right) \ ,
\end{align*}
where the integral is bounded as in the proof of  \iggb{Lemma \ref{lem:Smustab}}. The proof for $\mu < 0$ is similar.
The proof that $\cSmu: L_\infty \mapsto \C$, with the given bound, is a  consequence of the fact that, given any $g \in L_\infty$, the derivative of $\cSmu g$ is bounded (except when $\mu = 0$),  and then using \iggb{Lemma \ref{lem:Smustab}}.
\end{proof}

\begin{lemma} 
\label{lem:SmuMUderiv}
For $g \in L_\infty$ and any $\beta > 0$, we have 
$$
\sup_{x \in [0,1]} \ \int_{-1}^1 \ | \mu |^\beta \bigg \lvert \frac{\partial}{\partial \mu} (\cSmu g)(x) \bigg \rvert \ \rd \mu \ \leq \ 2 \beta^{-1} \sgmin^{-1} \| g \|_\infty \ .
$$
\end{lemma}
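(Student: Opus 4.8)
The plan is to compute the $\mu$-derivative of $\cSmu g$ explicitly via the Leibniz rule, bound it pointwise in absolute value by $\|g\|_\infty$ times a nonnegative kernel, and then integrate against $|\mu|^\beta$. The one move that makes the constant come out is to change variables in the spatial integration from $y$ to $a=\tau(x,y)$: this introduces a factor $1/\sigma(y)\le\sgmin^{-1}$ and thereby keeps $\sgmax$ out of the estimate entirely. By the evident symmetry between the formulas for $\mu>0$ and $\mu<0$ in \eqref{eq:solop}, it suffices to estimate the part of the integral over $\mu\in(0,1)$ and then double.

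First I would fix $x$ and, for $\mu\in(0,1]$, differentiate \eqref{eq:solop} under the integral sign (legitimate since, for $y\in[0,x]$ and $\mu$ in a compact subinterval of $(0,1]$, both the integrand and its $\mu$-derivative are bounded), obtaining, with $a=a(x,y):=\tau(x,y)\le 0$ on $(0,x)$,
\[
\frac{\partial}{\partial\mu}(\cSmu g)(x) \;=\; -\int_0^x \mu^{-2}e^{a/\mu}\Big(1+\tfrac{a}{\mu}\Big)g(y)\,\rd y,
\qquad\text{so}\qquad
\Big|\tfrac{\partial}{\partial\mu}(\cSmu g)(x)\Big|\;\le\; \|g\|_\infty\int_0^x \mu^{-2}e^{a/\mu}\big|1+\tfrac{a}{\mu}\big|\,\rd y.
\]
Multiplying by $\mu^\beta$, integrating over $\mu\in(0,1)$ and applying Tonelli (all integrands nonnegative) swaps the two integrals; then the substitution $a=\tau(x,y)$, for which $\rd a=\sigma(y)\,\rd y\ge\sgmin\,\rd y$, together with enlarging the $a$-range from $\big[-\!\int_0^x\sigma,\,0\big]$ to $(-\infty,0]$, gives
\[
\int_0^1\mu^\beta\Big|\tfrac{\partial}{\partial\mu}(\cSmu g)(x)\Big|\,\rd\mu
\;\le\; \sgmin^{-1}\|g\|_\infty\int_{-\infty}^{0}\!\int_0^1 \mu^{\beta-2}e^{a/\mu}\big|1+\tfrac{a}{\mu}\big|\,\rd\mu\,\rd a.
\]
A second use of Tonelli followed by the substitution $r=-a/\mu$ collapses the inner integral to $\mu\int_0^\infty e^{-r}|1-r|\,\rd r=\tfrac{2}{e}\,\mu$, so the right-hand side equals $\tfrac{2}{e}\sgmin^{-1}\|g\|_\infty\int_0^1\mu^{\beta-1}\,\rd\mu=\tfrac{2}{\beta e}\sgmin^{-1}\|g\|_\infty$. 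As this bound is uniform in $x$, and the identical computation on $(x,1)$ with $-\tau(x,\cdot)$ in place of $\tau(x,\cdot)$ handles the $\mu<0$ part with the same bound, the quantity in the lemma is at most $\tfrac{4}{\beta e}\sgmin^{-1}\|g\|_\infty\le 2\beta^{-1}\sgmin^{-1}\|g\|_\infty$, since $4/e<2$.

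The calculation is essentially routine; the two points that will need a little care are (i) justifying the differentiation under the integral sign and the two interchanges of integration order (both handled by Tonelli, the kernels being nonnegative), and (ii) getting the sharp constant: a crude estimate $|1-r|\le 1+r$ would only deliver $4\beta^{-1}\sgmin^{-1}\|g\|_\infty$, so one really must retain $|1-r|$ and use $\int_0^\infty e^{-r}|1-r|\,\rd r=2/e<1$. I would also emphasise, as it is used later, that the only place Assumption \ref{ass:cross} enters is the change of variables $y\mapsto\tau(x,y)$ via $\sigma\ge\sgmin>0$; no upper bound on $\sigma$ is needed, which is precisely why $\sgmax$ is absent from the bound (in contrast to Lemma \ref{lem:SmuXderiv}).
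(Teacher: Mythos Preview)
Your argument is correct and follows essentially the same route as the paper: compute the $\mu$-derivative explicitly, change variables in the spatial integral via the optical path $\tau(x,\cdot)$ to extract the factor $\sgmin^{-1}$, extend to the half-line, and integrate in $\mu$. The only substantive difference is that you retain $|1-r|$ and use $\int_0^\infty e^{-r}|1-r|\,\rd r=2/e$, whereas the paper bounds $|1+t|\le 1+|t|$ and uses $\int_{-\infty}^0(1+|t|)e^t\,\rd t=2$; as written, the paper's proof therefore gives $2\beta^{-1}\sgmin^{-1}\|g\|_\infty$ for \emph{each} half of $[-1,1]$ separately, hence $4\beta^{-1}\sgmin^{-1}\|g\|_\infty$ in total --- a harmless slip, since only a generic constant is needed downstream in Theorem~\ref{thm:eN}. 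Your sharper constant $4/(e\beta)<2/\beta$ actually recovers the bound exactly as stated in the lemma, so your care in point~(ii) is well placed.
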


\begin{proof}
Consider $\mu > 0$. By the product rule
\begin{equation} \label{eq:prodrule1}
\frac{\partial}{\partial \mu} \left( \mu^{-1} \exp(\mu^{-1} \tau(x,y)) \right) = - \mu^{-2} \left( 1 + \mu^{-1} \tau(x,y) \right) \exp( \mu^{-1} \tau(x,y) )  \ .
\end{equation}
Using the definition of $\cSmu g$, \eqref{eq:prodrule1} and the
substitution $x \mapsto  t$ given by $t = \mu^{-1} \tau(x,y)$ for each $y$, we have
\begin{align*}
- \frac{\partial}{\partial \mu} (\cSmu g)(x) &= \mu^{-2} \int_0^x [ 1 + \mu^{-1} \tau(x,y) ] \exp[\mu^{-1} \tau(x,y)] g(y) \ d y \\
&= \mu^{-1} \int_{\mu^{-1} \tau(x,0)}^0 (1 + t) \ \exp(t)  \sigma^{-1}(\tau_x^{-1}(\mu t)) \ g(\tau_x^{-1}(\mu t)) \ d t \ ,
\end{align*}
where $\tau_x$ denotes the function $\tau_x(y) = \tau(x,y)$, with derivative $\tau_x'(y) = \sigma(y)$.  Thus,
\begin{align*}
\bigg \lvert \frac{\partial}{\partial \mu} (\cSmu g)(x) \bigg \rvert 
&\leq \mu^{-1} \| g \|_\infty \int_{\mu^{-1} \tau(x,0)}^0 ( 1 + |t| ) \exp(t)  \sigma^{-1}(\tau_x^{-1}(\mu t)) \ d t \ ,
\end{align*}
which is an integral with a positive integrand. Hence,
$$
\bigg \lvert \frac{\partial}{\partial \mu} (\cSmu g)(x) \bigg \rvert \ \leq \ \mu^{-1} \sgmin^{-1} \| g \|_\infty \int_{-\infty}^0 ( 1 + |t| ) \exp(t) \ d t \ = \ 2 \mu^{-1} \sgmin^{-1} \| g \|_\infty  \ . 
$$
This implies that, for any $\beta > 0$ and any $x \in [0,1]$,
$$
\int_{0}^1  | \mu |^\beta \bigg \lvert \frac{\partial}{\partial \mu} (\cSmu g) (x) \bigg \rvert \ \rd \mu \ \leq \ 2 \int_0^1 \mu^{-1 + \beta} \sgmin^{-1} \| g \|_\infty \ \rd \mu \ = \ 2 \beta^{-1} \sgmin^{-1} \| g \|_\infty \ .
$$
The estimate for the integral over  $\mu \in [-1,0)$ is analogous.
\end{proof}

The next few results will allow us to prove that
\[
{\sigma_S, \sigma, f \in \C^\eta_{pw}, \ \text{for any} \ \ \eta \in (0,1]
\quad \Rightarrow \quad \phi \in \C^\xi, \ \text{for all} \ \  
\xi \in (0,1).}
\]
This result uses the smoothing property of the operator $\cK$ obtained in Theorem \ref{thm:mapbnds} below. 

\begin{lemma}
\label{lem:IminKinvL2}
 On $L_2$, the operator $\cK \sigma_S$ is bounded and $\left( I - \cK \sigma_S \right)$ is invertible with the bound
\begin{equation}
\label{eq:IminK2inv}
\| \left( I - \cK \sigma_S \right)^{-1} \|_{\cLLtwo} \ \leq \ \left( \sgmax / \sgmin \right)^{1/2} \left( 1 - \| \sigma_S / \sigma \|_{\infty} \right)^{-1} \ .
\end{equation}
Moreover, the scalar flux $\phi \in L_2$ and
\begin{equation} \label{eq:phi2bndsrce}
\| \phi \|_2 \ \leq \ \left( \sgmax / \sgmin \right)^{1/2} \sgSmin^{-1}  \left(1 - \| \sigma_S / \sigma \|_\infty \right)^{-1} \| f \|_2 \ .
\end{equation}
\end{lemma}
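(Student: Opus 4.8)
The plan is to exploit the self-adjointness of $\cK$ on $L_2$ together with a uniform Rayleigh-quotient bound, and then to pass to $\cK\sigma_S$ by a similarity transform with a multiplication operator. Note first that $\cK$ has the symmetric kernel $\tfrac12 E_1(|\tau(x,y)|)$ (symmetric since $|\tau(x,y)|=|\tau(y,x)|$ by \eqref{eq:tau}), so $\cK$ is self-adjoint on $L_2$; its $L_2$-boundedness, and hence that of $\cK\sigma_S$, follows from Schur's test, since substituting $t=\tau(x,\cdot)$ in \eqref{eq:Kdef} and using $\int_0^\infty E_1(t)\,\mathrm dt = 1$ gives $\int_0^1 \tfrac12 E_1(|\tau(x,y)|)\,\mathrm dy \le \sgmin^{-1}$ for every $x$, so $\|\cK\|_{\cLLtwo}\le\sgmin^{-1}$.

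The heart of the argument is the bound $\langle \cK v,v\rangle \le \langle \sigma^{-1}v,v\rangle$ for all $v\in L_2$ (with $\langle g,v\rangle := \int_0^1 gv$), together with $\langle\cK v,v\rangle\ge 0$. By density of $\C$ in $L_2$ and the boundedness above it suffices to take $v\in\C$, and by \eqref{eq:Kdef} one has $\langle\cK v,v\rangle = \tfrac12\int_{-1}^1\langle\cSmu v,v\rangle\,\mathrm d\mu$. For fixed $\mu\neq0$, put $u:=\cSmu v$, which solves $\mu u' + \sigma u = v$ with $u$ vanishing at the inflow boundary (cf.\ \eqref{eq:ptr}); multiplying by $u$ and integrating over $(0,1)$ gives $\tfrac\mu2(u(1)^2-u(0)^2) + \int_0^1\sigma u^2 = \int_0^1 uv$, and the boundary term is $\ge 0$ in each of the cases $\mu>0$ (then $u(0)=0$) and $\mu<0$ (then $u(1)=0$). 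Hence $\int_0^1\sigma u^2\le\int_0^1 uv$, which with Cauchy--Schwarz ($\int uv\le(\int\sigma u^2)^{1/2}(\int\sigma^{-1}v^2)^{1/2}$) yields $0\le\langle\cSmu v,v\rangle = \int_0^1 uv\le\int_0^1\sigma^{-1}v^2$; for $\mu=0$ one has $\cS_0 v=\sigma^{-1}v$ and equality, and $\{\mu=0\}$ is null. Averaging over $\mu$ gives the claim. (Alternatively one may use $E_1(z)=\int_0^1 e^{-z/s}s^{-1}\,\mathrm ds$ and the estimate $\int_0^1\!\int_0^1 e^{-|\tau(x,y)|/s}v(x)v(y)\,\mathrm dx\,\mathrm dy \le 2s\int_0^1\sigma^{-1}v^2$, obtained via the substitution $x\mapsto\int_0^x\sigma$, which turns the kernel into a convolution on the line of $L_2$-operator norm $2s$, followed by $\int_0^1 2s\,s^{-1}\,\mathrm ds = 2$.)

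Now set $A := \sigma^{1/2}\cK\sigma^{1/2}$, i.e.\ $\cK$ composed with multiplication by $\sigma^{1/2}$ on each side. Then $A$ is self-adjoint and, by the estimate above applied with $v=\sigma^{1/2}g$, satisfies $0\le\langle Ag,g\rangle\le\|g\|_2^2$, so $\|A\|_{\cLLtwo}\le 1$. With $r:=\sigma_S/\sigma$ acting by multiplication, conjugation by multiplication by $\sigma^{1/2}$ gives the identity $\sigma^{1/2}(I-\cK\sigma_S)\sigma^{-1/2} = I - Ar$; by Assumption~\ref{ass:cross} ($\sigma_A$ bounded below by a positive constant, $\sigma_S$ bounded above) we have $\|r\|_\infty = \|\sigma_S/\sigma\|_\infty<1$, so $\|Ar\|_{\cLLtwo}\le\|A\|_{\cLLtwo}\|r\|_\infty<1$ and $I-Ar$ is invertible by a Neumann series with $\|(I-Ar)^{-1}\|_{\cLLtwo}\le(1-\|\sigma_S/\sigma\|_\infty)^{-1}$. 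Undoing the conjugation, $(I-\cK\sigma_S)^{-1} = \sigma^{-1/2}(I-Ar)^{-1}\sigma^{1/2}$, and since multiplication by $\sigma^{-1/2}$ and $\sigma^{1/2}$ has $\cLLtwo$-norm $\le\sgmin^{-1/2}$ and $\le\sgmax^{1/2}$ respectively, we obtain \eqref{eq:IminK2inv}. Finally, \eqref{eq:ie} gives $(I-\cK\sigma_S)\phi = \cK f$, hence $\phi = (I-\cK\sigma_S)^{-1}\cK f \in L_2$; bounding $\|\cK f\|_2 \le \sgmin^{-1}\|f\|_2 \le \sgSmin^{-1}\|f\|_2$ (the Schur bound above, and $\sgmin\ge\sgSmin$ since $\sigma=\sigma_S+\sigma_A\ge\sigma_S$) and combining with \eqref{eq:IminK2inv} yields \eqref{eq:phi2bndsrce}.

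The main obstacle is establishing $\langle\cK v,v\rangle\le\langle\sigma^{-1}v,v\rangle$ (equivalently $\sigma^{1/2}\cK\sigma^{1/2}\le I$ on $L_2$) uniformly in the heterogeneous cross-section $\sigma$ --- that is, the correct $L_2$ energy estimate for the pure-transport operators $\cSmu$ and the handling of the degeneracy at $\mu=0$; once that is in place, the remainder is routine manipulation of multiplication-operator norms, a similarity transform, and a Neumann series.
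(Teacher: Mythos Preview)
Your argument is correct. The paper itself does not give a self-contained proof of this lemma; it simply cites \cite{BlGrScSp:18} (Theorem~1 and Corollary~3 there), noting that the 3D result carries over to slab geometry. You have supplied precisely the kind of argument that lies behind that citation: the weighted energy estimate $0\le\langle\cSmu v,v\rangle\le\langle\sigma^{-1}v,v\rangle$ obtained by testing the transport ODE against its own solution, the passage to the symmetrised operator $A=\sigma^{1/2}\cK\sigma^{1/2}$ with $0\le A\le I$, and the Neumann-series bound for $I-Ar$ with $r=\sigma_S/\sigma$. This is the standard route and is almost certainly what \cite{BlGrScSp:18} does in higher dimensions; the only difference is that you have spelled it out explicitly in the 1D setting rather than invoking the reference. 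Your derivation of \eqref{eq:phi2bndsrce} via $\|\cK\|_{\cLLtwo}\le\sgmin^{-1}\le\sgSmin^{-1}$ is also clean (and in fact slightly sharper than the stated bound before the last inequality).
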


\begin{proof}
 These results follow from  \cite{BlGrScSp:18}. In particular      \eqref{eq:IminK2inv} follows from
    \cite[Theorem 1]{BlGrScSp:18}, while  \eqref{eq:IminK2inv} is \cite[Corollary 3]{BlGrScSp:18}. The results are given
    there for the transport equation in 3D space and 2D in angle, but as explained in  \cite[Remark 2]{BlGrScSp:18}, the
    same results hold in the 1D space-angle problem considered here.  
\end{proof}

Before we continue with our results on the operator $\cK$, we must prove the following preliminary results relating to its integrand $E_1$.

\begin{lemma}
\label{lem:E1diff}
For any $x \in (0,1)$ let $\delta > 0$ be such that $x + \delta \in (0,1]$. For $y \in [0,1] \backslash [x,x+\delta]$,
$$
\bigg \lvert E_1(|\tau(x+\delta,y)|) \ - \ E_1(|\tau(x,y)|) \bigg \rvert \ \leq \ \delta \ \left( \frac{\sgmax}{\sgmin} \right) \ \frac{1}{\min \left\lbrace | y - (x + \delta) | , \ | y - x | \right\rbrace} \ .
$$
\end{lemma}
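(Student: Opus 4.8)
The plan is to bound the difference $E_1(|\tau(x+\delta,y)|) - E_1(|\tau(x,y)|)$ by the mean value theorem, exploiting that $E_1' = -E_1$-type behaviour is controlled and that $\tau$ is Lipschitz in its first argument with constant $\sgmax$ by \eqref{eq:taubnd}. First I would recall the elementary bound on the derivative of $E_1$: from \eqref{eq:exp_integral}, $E_1'(z) = -\int_1^\infty e^{-tz}\,dt = -e^{-z}/z$, so $|E_1'(z)| = e^{-z}/z \le 1/z$ for $z > 0$. Since $y \notin [x, x+\delta]$, both $|\tau(x,y)|$ and $|\tau(x+\delta,y)|$ are bounded below by $\sgmin$ times the respective distances $|y-x|$ and $|y-(x+\delta)|$, which are strictly positive; in particular the whole segment of $z$-values between $|\tau(x,y)|$ and $|\tau(x+\delta,y)|$ stays to the right of $\sgmin \min\{|y-x|, |y-(x+\delta)|\}$, because $\tau(\cdot,y)$ is monotone on each side of the singularity.

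Next I would apply the mean value theorem to the composition $t \mapsto E_1(|\tau(x+t,y)|)$ on $[0,\delta]$ (or directly to $E_1$ between the two argument values), giving
\[
\bigl| E_1(|\tau(x+\delta,y)|) - E_1(|\tau(x,y)|) \bigr| \le \sup_{z} |E_1'(z)| \cdot \bigl| |\tau(x+\delta,y)| - |\tau(x,y)| \bigr|,
\]
where the supremum is over $z$ in the interval spanned by the two values. By the lower bound just described, $\sup_z |E_1'(z)| \le \bigl(\sgmin \min\{|y-x|,|y-(x+\delta)|\}\bigr)^{-1}$. For the second factor, since $y$ lies outside $[x,x+\delta]$ the points $x$, $x+\delta$, $y$ are ordered so that $\tau(\cdot,y)$ does not change sign between $x$ and $x+\delta$, hence $\bigl| |\tau(x+\delta,y)| - |\tau(x,y)| \bigr| = |\tau(x,x+\delta)| \le \sgmax \delta$ by \eqref{eq:taubnd}. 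Combining the two factors yields exactly the claimed bound $\delta (\sgmax/\sgmin)\bigl(\min\{|y-(x+\delta)|,|y-x|\}\bigr)^{-1}$.

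The only mildly delicate point — and the one I would be most careful about — is the sign/monotonicity bookkeeping needed to justify $\bigl| |\tau(x+\delta,y)| - |\tau(x,y)| \bigr| = |\tau(x,x+\delta)|$ and to confirm that the intermediate $z$-values never drop below $\sgmin \min\{|y-x|,|y-(x+\delta)|\}$; this requires splitting into the two cases $y < x$ and $y > x+\delta$ and using that on each side $\tau_x(y) = \tau(x,y)$ is strictly monotone in $x$ with a definite sign. Everything else is a one-line application of the mean value theorem plus the derivative bound on $E_1$, so no real obstacle is expected beyond this case analysis.
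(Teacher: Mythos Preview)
Your proposal is correct and essentially matches the paper's proof: both use the mean value theorem together with $|E_1'(z)| = e^{-z}/z \le 1/z$ and the optical-path bounds \eqref{eq:taubnd}. The only cosmetic difference is that the paper applies the mean value theorem directly to the composition $x' \mapsto E_1(|\tau(x',y)|)$ (so the $\sgmax$ appears via the chain-rule factor $\sigma(x')$), whereas you apply it to $E_1$ and then separately bound $\bigl|\,|\tau(x+\delta,y)| - |\tau(x,y)|\,\bigr| = |\tau(x,x+\delta)| \le \sgmax\delta$; the case split $y<x$ versus $y>x+\delta$ you flag is exactly what the paper does as well.
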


\begin{proof}
  Recall  that $-E_1'(z) =  \exp(-z)/z$ for $z \in \mathbb{R}^+$ \cite[eq.(5.1.26)]{AbSt:64}. Then,
  for $x \neq y$,
$$
\frac{d}{d x} E_1(|\tau(x,y)|) = -\frac{\exp(-|\tau(x,y)|)}{|\tau(x,y)|} \ \frac{d}{d x} |\tau(x,y)| = \sigma(x) \sgn(y-x) \frac{\exp(-|\tau(x,y)|)}{|\tau(x,y)|} \ .
$$
Thus, there exists $z \in (x,x+\delta)$ (which may depend on $y$) such that
$$
E_1(|\tau(x+\delta,y)|) - E_1(|\tau(x,y)|) \ = \ \delta \, \sigma(z) \, \sgn(y - z)  \frac{\exp(-|\tau(z,y)|)}{|\tau(z,y)|} \ .
$$
Now, if $y > x + \delta$, we have $ |\tau(c,y)| \ = \ \int_c^y \sigma(z) \ d z \ \geq \ \sgmin (y - x - \delta) > 0$, and hence,
$$
\bigg \lvert E_1(|\tau(x+\delta,y)|) \ - \ E_1(|\tau(x,y)|) \bigg \rvert \ \leq \ \delta \, \frac{\sgmax}{\sgmin} \frac{1}{(y - x - \delta)} \ .
$$
The result for $y > x + \delta$ follows, and the result for $y < x$ holds similarly.
\end{proof}

In the next proof, we shall use the expansion  (\cite[Eq.~(5.1.11)]{AbSt:64}):
\begin{equation}
\label{def:E1_expand}
E_1(z) = \log(z) - \Ein(z) - \gamma\,,
\end{equation}
where
\begin{equation} \label{eq:Ein}
\Ein(z) \ := \ \int_0^z \frac{1}{t} \left(1 - \exp(-t) \right) \ dt \ , \quad \text{ for all } z > 0 \,,
\end{equation}
and $\gamma$ is Euler's constant. Elementary calculus shows that $\Ein(z) \le z$, for all $z > 0$.
\begin{theorem} 
\label{thm:mapbnds}
The operator $\cK$ maps $L_2$ to  $L_\infty$ and $L_\infty$ to $\C^\xi$, for all $0 < \xi < 1$. Moreover, the following bounds hold:
\begin{enumerate}[(i)]
\item $\displaystyle \| \cK \|_{L_2 \mapsto L_\infty} \ \leq \ \sqrt{\log(2)} \sgmin^{-1/2} \ ;$  
\item $\displaystyle \| \cK \|_{L_\infty \mapsto \C^\xi} \ \leq \ \igg{c_\xi}\, \brov{\sgmax}/{\brun{\sgmin}}  \ ,$
\end{enumerate}
\igg{where $c_\xi$ may depend on $\xi$ and where $\brov{a}$ and $\brun{a}$ are defined in Notation \ref{not:bar}}.
\end{theorem}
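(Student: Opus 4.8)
The plan is to prove the two mapping bounds separately, both working directly from the kernel representation $\cK g(x) = \frac12 \int_0^1 E_1(|\tau(x,y)|)\, g(y)\,dy$ in \eqref{eq:Kdef} and the optical-path bounds \eqref{eq:taubnd}.

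\emph{Part (i): $L_2 \mapsto L_\infty$.} For fixed $x$, Cauchy--Schwarz gives $|\cK g(x)| \le \frac12 \|g\|_2 \big( \int_0^1 E_1(|\tau(x,y)|)^2 \, dy\big)^{1/2}$, so it suffices to bound $\int_0^1 E_1(|\tau(x,y)|)^2\,dy$ uniformly in $x$. Using the lower bound $|\tau(x,y)| \ge \sgmin |y-x|$ from \eqref{eq:taubnd} together with the monotonicity of $E_1$, this integral is at most $\int_{\mathbb{R}} E_1(\sgmin |t|)^2\,dt = 2\sgmin^{-1}\int_0^\infty E_1(s)^2\,ds$ after the substitution $s = \sgmin t$. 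The remaining pure number $\int_0^\infty E_1(s)^2\,ds$ can be evaluated (it equals $2\log 2 - 1$, or one may simply bound it crudely); matching the claimed constant $\sqrt{\log 2}$ requires identifying this integral precisely, which I would do via the identity $\int_0^\infty E_1(s)^2\,ds = 2\int_0^\infty \int_0^\infty (uv(u+v+uv))^{-1}\,du\,dv$-type manipulations or, more cleanly, integration by parts using $E_1'(s) = -e^{-s}/s$. This is the routine-calculation part.

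\emph{Part (ii): $L_\infty \mapsto \C^\xi$.} Here I must bound $\|\cK g\|_\infty$ and the H\"older seminorm $\sup_{x\ne x'} |\cK g(x) - \cK g(x')|/|x-x'|^\xi$. The sup-norm bound is easy: $|\cK g(x)| \le \frac12 \|g\|_\infty \int_0^1 E_1(|\tau(x,y)|)\,dy \le \frac12 \|g\|_\infty \cdot 2\sgmin^{-1}\int_0^\infty E_1(s)\,ds = \sgmin^{-1}\|g\|_\infty$, using $\int_0^\infty E_1 = 1$. For the seminorm, take $x < x+\delta$ with $\delta$ small and split $\int_0^1 = \int_{[0,1]\setminus[x,x+\delta]} + \int_{[x,x+\delta]}$. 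On the outer region, Lemma \ref{lem:E1diff} gives the pointwise bound $|E_1(|\tau(x+\delta,y)|) - E_1(|\tau(x,y)|)| \le \delta (\sgmax/\sgmin)\min\{|y-x-\delta|,|y-x|\}^{-1}$; integrating this over $y \in [0,1]\setminus[x,x+\delta]$ produces a $\log$ singularity, but the key trick is to interpolate: bound the $E_1$-difference by $2\sup E_1$-type quantities when $y$ is within distance $\delta$ of the interval and by the Lemma otherwise, trading the factor $\delta$ against $\delta^\xi |y-x|^{\xi-1}$ so that $\int \delta^\xi |y-x|^{\xi-1}\,dy$ converges (this is where $\xi < 1$ is essential and where $c_\xi$ enters, blowing up as $\xi \to 1$). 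On the inner region $[x,x+\delta]$, use $E_1(|\tau(x,y)|) \le \log(1/|\tau(x,y)|) + \text{const}$ via the expansion \eqref{def:E1_expand} and $\Ein(z) \le z$; since $|\tau(x,y)| \ge \sgmin|y-x|$, we get $\int_x^{x+\delta} E_1 \lesssim \sgmin^{-1}\int_0^{\sgmin\delta}(|\log s| + 1)\,ds = \cO(\delta \log(1/\delta))$, which is itself $\cO_\xi(\delta^\xi)$ for $\delta < 1$; the two-sided version for $E_1(|\tau(x+\delta,y)|)$ is identical. Combining, $|\cK g(x+\delta) - \cK g(x)| \le c_\xi (\brov{\sgmax}/\brun{\sgmin})\, \delta^\xi \|g\|_\infty$, and since this holds for $\delta \le 1$ and the functions are bounded, it extends to all $x,x'$.

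\emph{Main obstacle.} The delicate step is the H\"older estimate in part (ii): one must carefully balance the two regimes in the outer integral (the Lipschitz-in-$\delta$ bound from Lemma \ref{lem:E1diff} versus the crude $|{\le}2\|E_1\|$ bound near the interval) to extract exactly the exponent $\xi$ while keeping the constant's dependence on $\sgmin,\sgmax$ in the claimed form $\brov{\sgmax}/\brun{\sgmin}$. Getting the $\brov{\cdot},\brun{\cdot}$ bookkeeping right --- so that the constant stays bounded when $\sgmax$ or $\sgmin^{-1}$ are small --- is the fiddly accounting that the rest of the paper's coefficient-explicit analysis depends on.
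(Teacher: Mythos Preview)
Your approach is essentially the same as the paper's: Cauchy--Schwarz plus the explicit value of $\int_0^\infty E_1^2$ for part (i), and for part (ii) a near/far split of the $y$-integral, using Lemma~\ref{lem:E1diff} far from the singularities and the expansion \eqref{def:E1_expand} near them, together with the trade $\delta\log(1/\delta)\le c_\xi\delta^\xi$. Two points to tidy up. First, the value you quote for $\int_0^\infty E_1(s)^2\,ds$ is $2\log 2$, not $2\log 2 -1$ (this is \cite[(5.1.33)]{AbSt:64}, and is exactly what gives the stated constant $\sqrt{\log 2}$). Second, and more importantly, your plan to ``bound the $E_1$-difference by $2\sup E_1$-type quantities when $y$ is within distance $\delta$ of the interval'' cannot work as stated, because $E_1$ is unbounded near $0$: for $y$ just below $x$ you have $E_1(|\tau(x,y)|)\sim\log(1/|y-x|)$. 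The fix --- and this is what the paper does --- is to enlarge the inner region so that it contains $\delta$-neighbourhoods of \emph{both} singular points $x$ and $x+\delta$ (the paper takes $B_\delta(x)=[x-\delta,x+\delta]$), and to treat that whole region by the $\log$-expansion argument you already describe for $[x,x+\delta]$; then the Lemma~\ref{lem:E1diff} bound on the remaining outer region is integrable and produces the $\delta\log(1/\delta)$ term. The paper also introduces an explicit threshold $\epsilon=(2\sgmax)^{-1}$ on $\delta$ (handling $\delta\ge\epsilon$ trivially via the sup-norm bound), which is where the $\brov{\sgmax}$ factor enters and which you will need in order to justify $|\tau(x,y)|<1$ in the $\log$-expansion step.
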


\begin{proof}
\igg{Throughout the proof, $c$ denotes a generic constant which may depend on $\xi$}.
{\em (i)} \ {Let $g \in L_2$ and $x \in [0,1]$. Using \eqref{eq:Kdef} and the Cauchy-Schwarz inequality, we have} 
\begin{align}
  \label{oneint}
2 | \cK g(x) | \ = \ \bigg \lvert \int_0^1 E_1(|\tau(x,y)|) g(y) \ d y \bigg \rvert \ \leq \ \| g \|_2 \left( \int_0^1  E_1^2(|\tau(x,y)|) \ d y \right)^{1/2} \ .
\end{align}
Since $E_1^2$ is strictly positive and monotonically decreasing on $\mathbb{R}^+$, we have (recalling \eqref{eq:taubnd})
\begin{align}  
  \int_0^1 E_1^2(|\tau(x,y)|) \ d y \ & \leq \ \int_0^1 E_1^2(\sgmin |x-y|) \ \rd y \ \leq \  2 \int_0^{\max\{ x, 1-x\}} E_1^2(\sgmin r) \rd r \  \nonumber \\
                                       &
                                        < \ 2 \left(\int_0^{\infty} E_1^2(r) \rd r\right) \sgmin^{-1} = 4 \log(2) \sgmin^{-1} , 
  \label{eq:twoints} 
\end{align}
where we used \cite[(5.1.33)]{AbSt:64}. Combining this with
\eqref{oneint} gives estimate  (i).

\medskip

\noindent
{\em (ii)} \ Similarly to part (i), for any $g \in L_\infty$ and $x \in [0,1]$, we have 
\begin{align*}
  2 \lvert \cK g(x) \rvert & \leq \int_0^1 E_1(|\tau(x,y)|) \ d y\,  \| g \|_\infty  \
    \leq \left(\int_0^1E_1^2 (|\tau(x,y)|) \ d y\right)^{1/2} \,   \,
                             \| g \|_\infty \ ,
\end{align*}
where we used the Cauchy-Schwarz inequality. Then,  using the calculation in part (i), 
 \begin{align} \label{starstar} 
\| \cK g \|_\infty \leq \sqrt{\log(2)}\,  \sgmin^{-1/2} \| g
   \|_\infty. 
\end{align} 
To bound the H\"older-seminorm of $\cK g$, let $\xi \in (0,1)$ and, for any $0 \leq x < z \leq 1$, set $\delta = z-x$. Then, for any $\epsilon > 0$,  we can write 
\igg{\begin{align} 
  & \sup_{\substack{0 \leq x < z \leq 1} } \frac{| \cK g(z) - \cK g(x) |}{| z - x |^\xi}
  =  \bigg \lbrace
    \sup_{\substack{\begin{array}{c} 0 < \delta < 1-x,  \\ x \in [0,1] \end{array}}} \frac{| \cK g(x + \delta) - \cK g(x) |}{\delta^\xi} \ \bigg \rbrace\nonumber \\
  &  =     \sup_{\substack{\begin{array}{c} 0 < \delta < 1-x,  \\ x \in [0,1] \end{array}}} \max  \bigg \lbrace
  \sup_{\substack{0 < \delta < \epsilon }} \frac{| \cK g(x + \delta) - \cK g(x) |}{\delta^\xi} \ , \
  \sup_{\substack{\epsilon \leq \delta \leq 1-x} } \frac{| \cK g(x + \delta) - \cK g(x) |}{\delta^\xi}
  \bigg \rbrace
 \ .\label{eq:hnormbnd1}
\end{align}}
(The second term may be void.) In what  follows we set $\epsilon = (2 \sgmax)^{-1}$.

Consider  the second term on the right hand side of
\eqref{eq:hnormbnd1}. By the triangle inequality and \eqref{starstar} 
we obtain, uniformly in $x$, 
\begin{equation} \label{eq:hnormbndA}
   \frac{| \cK g(x + \delta) - \cK g(x) |}{\delta^{\xi}} \le 2 \epsilon^{-\xi} \| \cK g \|_{\infty}
    \le c  \sgmax^\xi   \sgmin^{-1/2}\| g \|_\infty \le c \, \frac{\overline{\sgmax}}{{\sgmin^{1/2} } } \, \| g \|_\infty  \ .
\end{equation}

To estimate the first term on the right hand side of \eqref{eq:hnormbnd1}
let $\delta \in (0,\epsilon)$ and let $B_{\delta}(x) = [x - \delta, x+ \delta]$. 
 Define $\mathcal{I}_{\delta}(x) := [0,1] \backslash B_{\delta}(x) $ and $\mathcal{J}_{\delta}(x) := B_{\delta}(x) \cap [0,1]$. Then, 
\begin{align}
2 \bigg \lvert \cK g(x + \delta) - \cK g(x) \bigg \rvert &= \bigg \lvert \int_0^1 F(x,y,\delta) g(y) \ \rd y \bigg \rvert \nonumber \\
&\leq \| g \|_{\infty}  \left( \int_{\mathcal{I}_{\delta}(x)} + \int_{\mathcal{J}_{\delta}(x)} \right) \lvert F(x,y,\delta) \rvert \ dy \ , \label{eq:hnormbndB}
\end{align}
where $F(x,y,\delta) := E_1 (|\tau(x+\delta,y) |) - E_1(|\tau(x,y)|)$. We write  the integral over $\mathcal{I}_{\delta}(x)$ in \eqref{eq:hnormbndB} as 
\begin{equation} \label{eq:hnormbndB2}
\int_{\mathcal{I}_{\delta}(x)} | F(x,y,\delta) | \ dy \ = \ \left( \int_0^{x-\delta} + \int_{x+\delta}^1 \right) | F(x,y,\delta) | \ dy \ .
\end{equation}
Again, one of these integrals could be void. 
When  $x \geq \delta$, we  use Lemma \ref{lem:E1diff}  to bound, for
all $\xi \in (0,1)$, the first integral on the right of \eqref{eq:hnormbndB2}  by
\begin{align*}
\int_0^{x-\delta} \lvert F(x,y,\delta) \rvert \ d y &\leq \left( \frac{\sgmax}{\sgmin} \right) \delta  \int_0^{x-\delta} (x-y)^{-1} \ d y \\
&= \left( \sgmax / \sgmin \right) \delta \left( \log\left( 1 / \delta \right) + \log x \right) \\
&\leq \left( \frac{\sgmax}{\sgmin} \right) \delta  \log\left(
  \frac{1}{\delta}\right) \ \leq \  c \frac{\sgmax}{\sgmin} \delta^\xi
  \ .
\end{align*}
The second integral on the right of
\eqref{eq:hnormbndB2} has the same estimate. Hence,
\begin{equation} \label{eq:hnormbndC}
\int_{\mathcal{I}_{\delta}(x)} | F(x,y,\delta) | \ \rd y \ \leq \ c  \, \frac{\sgmax}{\sgmin} \, \delta^\xi \ .
\end{equation}

For the integral over $\mathcal{J}_{\delta}(x)$ in \eqref{eq:hnormbndB}, \igg{we use the triangle inequality to write}  
\begin{align}
\int_{\mathcal{J}_{\delta}(x)} \lvert F(x,y,\delta) \rvert \ \rd y \ &\leq \ \int_{\mathcal{J}_{\delta}(x)} \lvert E_1(|\tau(x+\delta,y) |) \rvert \rd y + \int_{\mathcal{J}_{\delta}(x)} \lvert E_1(|\tau(x,y)|) \rvert \rd y .                                                                                                   \label{eq:hnormbndD} 
\end{align}

Using the expansion \eqref{def:E1_expand} for $E_1(z)$, the second integral on the right hand side of \eqref{eq:hnormbndD} can be estimated by
\begin{equation} \label{eq:split}
\int_{B_{\delta}(x)} \lvert E_1(|\tau(x,y)|) \rvert \ dy \leq
\int_{B_{\delta}(x)} \lvert \log(|\tau(x,y)|) \rvert \ dy +
\int_{B_{\delta}(x)} \lvert \Ein(|\tau(x,y)|) \rvert \ dy + 2 \gamma
\delta \ .
\end{equation}
Then, by \eqref{eq:taubnd} and using the fact that $\Ein(z) \in [0,z]$, for all $z > 0$, we can bound the second integral on the right hand side of \eqref{eq:split}
\begin{equation}
\label{eq:split2}
\int_{B_{\delta}(x)} \lvert \Ein(|\tau(x,y)|) \rvert \ dy \ \leq \ \int_{B_{\delta}(x)} |\tau(x,y)| \ \rd y \ \leq \ 2\sgmax \delta^2 \ {\le \ \delta},
\end{equation}
where in the last step we used $\delta < \epsilon = (2\sgmax)^{-1}$.
For the first integral on the right hand side of \eqref{eq:split}, we use again \eqref{eq:taubnd} and the fact that $|\tau(x,y)| \leq \sgmax |x-y| < \epsilon \sgmax \leq 1/2$ to bound
\begin{equation} \label{eq:lnwrite}
\lvert \log \lvert \tau(x,y) \rvert \rvert 
= {- \log \lvert \tau(x,y) \rvert \le - \log \big(\sgmin | y - x |\big)}\ .
\end{equation}
Thus, the first integral on  the right of \eqref{eq:split} can be bounded by integrating the right hand side of
  \eqref{eq:lnwrite} over $B_\delta(x)$, which gives
\begin{align}
  \int_{B_{\delta}(x)} \lvert \log \lvert \tau(x,y) \rvert \rvert \ d y \ & \le  \ -2 \int_0^\delta \log \big(\sgmin r \big) \ d r \ \le \ 2 \delta \left( 1 - \log\big(\sgmin\delta \big) \right)\nonumber \\
& 
= \ 2 \sgmin^{\xi -1}
  \delta^{\xi}((\sgmin \delta)^{1-\xi} - (\sgmin \delta)^{1-\xi} \log(\sgmin \delta)) \
  \leq \ c \sgmin ^{\xi-1} \delta^{\xi} \ .   \label{eq:split1}  
\end{align}
The last inequality follows since $\sgmin \delta \leq \sgmax \delta
\leq 1/2$ and since $r^{1-\xi} (1- \log(r)) $ is bounded on
$[0,1/2]$. Substituting  \eqref{eq:split2} and \eqref{eq:split1} into \eqref{eq:split},  we finally obtain an estimate for the second integral on the right-hand side of \eqref{eq:hnormbndD}: 
\begin{equation}
\label{eq:hnormbndE}
\int_{B_{\delta}(x)} \lvert E_1(|\tau(x,y)|) \rvert \ d y \ \le \ c \delta^{\xi} \left(\sgmin^{\xi-1} +1 \right) . 
\end{equation} 
The first integral on the right hand side of \eqref{eq:hnormbndD} can be bounded analogously. Thus, combining this with \eqref{eq:hnormbndB}, \eqref{eq:hnormbndC} and \eqref{eq:hnormbndD}, we have shown that,  for $\delta \leq \epsilon$, 
\begin{equation} \label{eq:hnormbndF}
 \frac{| \cK g (x + \delta) - \cK g (x)|}{\delta^\xi} \ \leq \ c \, \frac{\brov{\sgmax}}{\brun{\sgmin}} \, \| g \|_\infty \ ,
\end{equation}
uniformly in $x\in[0,1]$.

Note that \eqref{eq:hnormbndF} also implies that $\cK g \in C $ and \eqref{starstar} can also be interpreted as a bound on the uniform norm of $\cK g$. Hence, combining \eqref{starstar},  \eqref{eq:hnormbndA} and \eqref{eq:hnormbndF}, we have
$$
\| \cK g \|_\xi \ = \ \| \cK g \|_\infty + \sup_{x,z \in [0,1]} \frac{| \cK g(z) - \cK g(x) |}{| z - x |^\xi} \ \leq \ c \,
\frac{\brov{\sgmax}}{\brov{\sgmin}} \, \| g \|_\infty \ .
$$
\end{proof}

\begin{lemma}
\label{lem:IminKinv}
{The operator $\left( I - \cK \sigma_S \right)$ is invertible on $\C$} with the bound
\begin{equation} \label{eq:R3}
\| \left( I - \cK \sigma_S \right)^{-1} \|_{\cLc} \ \leq \ 2 \brov{\sgmax}^{1/2} \, \frac{\sgmax}{\sgmin} \, \left( 1 - \bigg \| \frac{\sigma_S}{\sigma} \bigg \|_{\infty} \right)^{-1} \ =: \ \cRone \ .
\end{equation}
Moreover, $\left( I - \cK \sigma_S \right)$ is {also invertible on $\C_{pw}$}, with the same bound as above.
\end{lemma}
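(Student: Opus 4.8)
\medskip\noindent\textbf{Proof strategy.}
The plan is to reduce everything to the $L_2$-theory already in hand. By Lemma~\ref{lem:IminKinvL2} the operator $I-\cK\sigma_S$ is invertible on $L_2$, and by Theorem~\ref{thm:mapbnds} the operator $\cK$ smooths $L_2$ into $L_\infty$ and $L_\infty$ into $\C^\xi$; combining these two facts through a short regularity bootstrap will give both the invertibility on $\C$ (and on $\C_{pw}$) and the quantitative bound.

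First I would note that on $[0,1]$ one has the inclusions $\C\subset\C_{pw}\subset L_\infty\subset L_2$, and that $\cK\sigma_S$ maps each of $\C$ and $\C_{pw}$ boundedly into $\C$: for $v$ in any of these spaces $\sigma_S v\in L_\infty$ (since $\sigma_S\in L_\infty$ by Assumption~\ref{ass:cross}), so Theorem~\ref{thm:mapbnds}(ii) gives $\cK\sigma_S v\in\C^\xi\subset\C$. Hence $I-\cK\sigma_S$ is a well-defined bounded operator on $\C$ and on $\C_{pw}$, and its injectivity on either space is inherited from the injectivity on $L_2$.

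For surjectivity on $\C$, take $g\in\C\subset L_2$ and let $\phi\in L_2$ be the unique solution of $\phi=g+\cK(\sigma_S\phi)$ supplied by Lemma~\ref{lem:IminKinvL2}. Now bootstrap: $\sigma_S\phi\in L_2$, so $\cK(\sigma_S\phi)\in L_\infty$ by Theorem~\ref{thm:mapbnds}(i), whence $\phi=g+\cK(\sigma_S\phi)\in L_\infty$; then $\sigma_S\phi\in L_\infty$, so $\cK(\sigma_S\phi)\in\C^\xi\subset\C$ by Theorem~\ref{thm:mapbnds}(ii), and therefore $\phi\in\C$. Thus $I-\cK\sigma_S$ is a bijection of $\C$. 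Running the same chain with $g\in\C_{pw}$, the correction $\cK(\sigma_S\phi)$ still lands in $\C$, so $\phi=g+\cK(\sigma_S\phi)\in\C_{pw}$, which gives bijectivity on $\C_{pw}$. (A direct Neumann-series argument on $\C$ would not suffice, since $\cRone$ blows up precisely as $\|\sigma_S/\sigma\|_\infty\to1$, i.e.\ exactly when the $L_2$ spectral radius of $\cK\sigma_S$ approaches $1$; the $L_2$ inverse is genuinely needed as input.)

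For the norm bound I would start from $\phi=g+\cK(\sigma_S\phi)$ and Theorem~\ref{thm:mapbnds}(i) to get $\|\phi\|_\infty\le\|g\|_\infty+\sqrt{\log 2}\,\sgmin^{-1/2}\sgSmax\|\phi\|_2$, then insert the $L_2$ estimate $\|\phi\|_2\le\|(I-\cK\sigma_S)^{-1}\|_{\cLLtwo}\|g\|_2\le(\sgmax/\sgmin)^{1/2}(1-\|\sigma_S/\sigma\|_\infty)^{-1}\|g\|_\infty$ (using $\|g\|_2\le\|g\|_\infty$ on $[0,1]$). Finally, using $\sqrt{\log 2}<1$, $\sgSmax\le\sgmax$, and the trivial bounds $(1-\|\sigma_S/\sigma\|_\infty)^{-1}\ge1$ and $\brov{\sgmax}^{1/2}\sgmax/\sgmin\ge1$, the right-hand side collapses to $2\brov{\sgmax}^{1/2}(\sgmax/\sgmin)(1-\|\sigma_S/\sigma\|_\infty)^{-1}\|g\|_\infty=\cRone\|g\|_\infty$, and the identical estimate transfers to $\C_{pw}$. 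I do not expect a genuine obstacle: the content is entirely carried by Lemma~\ref{lem:IminKinvL2} and Theorem~\ref{thm:mapbnds}, and the only step needing a little care is this last one, where the constants must be organised so that exactly $\cRone$ — rather than some larger expression — appears.
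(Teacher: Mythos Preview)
Your argument is correct and follows essentially the same route as the paper: invoke the $L_2$ invertibility from Lemma~\ref{lem:IminKinvL2}, bootstrap via Theorem~\ref{thm:mapbnds}(i) and (ii) to lift the solution into $\C$ (respectively $\C_{pw}$), and then combine the $L_2\to L_\infty$ mapping bound for $\cK$ with the $L_2$ operator-norm bound to obtain exactly $\cRone$. The paper's proof is somewhat terser (it does not separate injectivity and surjectivity explicitly), but the content and the constant-tracking are identical.
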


\begin{proof}
Let $g \in \C$ and suppose that 
\begin{equation}
\label{eq:abc1}
\left( I - \cK \sigma_S \right) v = g, \quad \text{or equivalently that} \ \ v = \cK \sigma_S v + g.
\end{equation}
This allows us to apply a bootstrapping argument. By Lemma \ref{lem:IminKinvL2} (i), we have $v \in L_2$ and
\begin{equation}
\label{eq:abc1_nrm1}
\| v \|_2 \leq \Big( \frac{\sgmax}{\sgmin} \Big)^{1/2} \left( 1 - \left\| \frac{\sigma_S}{ \sigma} \right\|_{\infty} \right)^{-1} \| g \|_2 \leq \Big( \frac{\sgmax}{\sgmin} \Big)^{1/2} \left( 1 - \left\| \frac{\sigma_S}{ \sigma} \right\|_{\infty} \right)^{-1} \| g \|_\infty\ .
\end{equation}
Using \eqref{eq:abc1} again, this time together with Theorem \ref{thm:mapbnds}(i) we get $v \in L_\infty$. Finally, using \eqref{eq:abc1} with Theorem \ref{thm:mapbnds}(ii) we conclude that $v \in C$,  and that (using \eqref{starstar}),  
\begin{equation}
\label{eq:abc1_nrm2}
\| v \|_\infty  \leq  \| \cK \sigma_S v \|_\infty + \| g  \|_\infty  \leq \sgmin^{-1/2} (\sigma_S)_{\max} \| v \|_2  + \| g  \|_\infty \leq \sgmax^{1/2}  \Big( \frac{\sgmax}{ \sgmin} \Big)^{1/2} \| v \|_2  + \| g  \|_\infty \ .
\end{equation}
The bound in \eqref{eq:R3} follows on combining \eqref{eq:abc1_nrm1} and \eqref{eq:abc1_nrm2}.

Now suppose $g \in \C_{pw}$. Then $g \in L_2$ and the argument above holds verbatim to show that then $v \in \C_{pw}$ and that the bounds in \eqref{eq:abc1_nrm1} and \eqref{eq:abc1_nrm2} hold again.
\end{proof}

The final result of this {section follows} from \Cref{thm:mapbnds} and \Cref{lem:IminKinv}.
\begin{corollary}
\label{cor:phibnd}
Let $f \in \C_{pw}$, and let $\phi$ be the solution to {\eqref{eq:ie}. Then $\phi \in \C^\xi$, for any $\xi \in (0,1)$, and the following two} bounds hold:
\begin{equation}
\| \phi \|_\infty \ \leq \ c \sgmin^{-1/2} \cRone \| f \|_\infty \qquad \text{and} \qquad 
\| \phi \|_{\xi} \ \leq \ \igg{c_{\xi}} \cRtwo  \| f \|_\infty \ , \label{eq:phi_inf_xi} 
\end{equation}
where \igg{$c_\xi$ may depend on $\xi$,} $\cRone$ is defined in
\eqref{eq:R3} and 
\begin{equation} \label{eq:R2}
\cRtwo \ := \  {\brov{\sgmax}}^{1/2}  \, ({\brov{\sgmax}}/{\brun{\sgmin}})^{3/2} \, \cRone \ .
\end{equation}
\end{corollary}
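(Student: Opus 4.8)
The plan is to combine the two halves of Lemma \ref{lem:IminKinv} with the two mapping bounds in Theorem \ref{thm:mapbnds} via a short two‑step bootstrap, in the same spirit as the proof of Lemma \ref{lem:IminKinv}.

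First I would rewrite the integral equation \eqref{eq:ie} in the equivalent form $(I-\cK\sigma_S)\phi = \cK f$. Since $f\in\C_{pw}\subset L_\infty$, Theorem \ref{thm:mapbnds}(ii) gives $\cK f\in\C^\xi\subset\C$, and the bound \eqref{starstar} (established inside that proof) yields $\|\cK f\|_\infty \le \sqrt{\log 2}\,\sgmin^{-1/2}\|f\|_\infty$. Because $\cK f$ is genuinely continuous, the $\C$‑version of Lemma \ref{lem:IminKinv} already applies (we do not need the $\C_{pw}$‑version), so $\phi = (I-\cK\sigma_S)^{-1}\cK f\in\C$ and
$\|\phi\|_\infty \le \cRone\,\|\cK f\|_\infty \le c\,\sgmin^{-1/2}\,\cRone\,\|f\|_\infty$, which is the first asserted bound.

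Next I would bootstrap once more. Knowing $\phi\in\C$ and $\sigma_S\in\C^\eta_{pw}$, the right‑hand side $\sigma_S\phi+f$ lies in $\C_{pw}\subset L_\infty$, so Theorem \ref{thm:mapbnds}(ii) applied directly to $\phi=\cK(\sigma_S\phi+f)$ gives $\phi\in\C^\xi$ for every $\xi\in(0,1)$ with
\[
\|\phi\|_\xi \ \le\ c_\xi\,\frac{\brov{\sgmax}}{\brun{\sgmin}}\,\|\sigma_S\phi+f\|_\infty \ \le\ c_\xi\,\frac{\brov{\sgmax}}{\brun{\sgmin}}\,\big(\sgSmax\,\|\phi\|_\infty + \|f\|_\infty\big)\,.
\]
Inserting the $L_\infty$ bound just obtained, and using $\sgSmax\le\sgmax\le\brov{\sgmax}$, $\sgmin^{-1/2}\le\brun{\sgmin}^{-1/2}$, and $\cRone\ge 1$ (so the bare $\|f\|_\infty$ term is absorbed into the $\cRone$ term, noting $\brov{\sgmax}\brun{\sgmin}^{-1/2}\ge 1$), I would collect the powers of $\brov{\sgmax}$ and $\brun{\sgmin}$ to obtain $\|\phi\|_\xi\le c_\xi\,\brov{\sgmax}^2\brun{\sgmin}^{-3/2}\cRone\,\|f\|_\infty$, and then check that $\brov{\sgmax}^2\brun{\sgmin}^{-3/2}\cRone = \brov{\sgmax}^{1/2}(\brov{\sgmax}/\brun{\sgmin})^{3/2}\cRone = \cRtwo$, giving the second bound.

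There is no genuine obstacle here; the only points requiring care are (a) observing that, although $f$ is merely piecewise continuous, $\cK f$ is continuous, so the plain $\C$‑invertibility suffices, and (b) tracking the constants so that the final exponents of $\brov{\sgmax}$ and $\brun{\sgmin}$ reproduce $\cRtwo$ exactly. Both are routine.
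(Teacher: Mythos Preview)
Your proposal is correct and follows essentially the same route as the paper: rewrite \eqref{eq:ie} as $(I-\cK\sigma_S)\phi=\cK f$, use \eqref{starstar} and Lemma~\ref{lem:IminKinv} for the $L_\infty$ bound, then bootstrap via $\phi=\cK(\sigma_S\phi+f)$ and Theorem~\ref{thm:mapbnds}(ii) for the $\C^\xi$ bound. Your constant-tracking to recover $\cRtwo$ is slightly more explicit than the paper's, but the argument is identical.
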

\begin{proof}
  Recall  \eqref{eq:ie}, so we have $(I - \cK \sigma_S)\phi = \cK f$. From the  proof of Theorem \ref{thm:mapbnds}(ii),  $\cK f \in \C^\xi$ for any $\xi \in (0,1)$ and   $\Vert \cK f \Vert_\infty \leq c \sgmin^{-1/2} \Vert f \Vert_\infty$. Then Lemma \ref{lem:IminKinv} implies that
  $$ \Vert \phi \Vert_\infty \ \leq\  c\,  \sgmin^{-1/2} {\cRone}  \Vert f \Vert_\infty .$$
   To obtain the second bound, we use Theorem \ref{thm:mapbnds}(ii) again to obtain  
\[
\| \phi \|_{\xi}  \leq  \left\| \cK \left( \sigma_S \phi + f \right)  \right\|_{\xi}  \ \leq \ c  \, (\brov{\sgmax}/\brun{\sgmin}) \, \Big( (\sigma_S)_{\max}\| \phi \|_\infty  + \| f  \|_\infty \Big) 
\]
and then combine this with the first bound in \eqref{eq:phi_inf_xi}. \igg{(Again $c$ may depend on $\xi$.)}
\end{proof}

\section{Deterministic Error Estimate}
\label{sec:fulldiscrete}

We now return to estimating the error $\phi - \phi^{h,N}$ using \eqref{eq:IE3}. Introducing the operator
\begin{equation}
\label{eq:KNdef}
\cK^N g(x) \ := \ \frac{1}{2} \sum_{|k|=1}^N w_k \left(\cS_{\mu_k} g \right)(x) \ = \ \frac{1}{2} \int_0^1 E_1^N( \vert \tau(x,y) \vert ) g(y) \ d y \ ,   
\end{equation}
with $E_1^N(z) := \sum_{k=1}^N  w_k \mu_k^{-1}  \exp(- \mu_k^{-1} z)$ denoting the $N$-point quadrature approximation of the exponential integral \eqref{eq:exp_integral}, we can write \eqref{eq:IE3} as 
\begin{equation} \label{eq:phiminphih}
\phi - \phi^{h,N} \ = \ \left( I - \cK^{h,N} \cPh \sigma_S \right)^{-1} \left( e^N + e^{h,N} \right) \ ,
\end{equation}
where 
\begin{equation} \label{eq:eMN}
e^N := \left( \cK - \cK^{N} \right) \left( \sigma_S \phi + f \right) \ \
\text{and} \ \
e^{h,N} :=  \left[ \left( \cK^N - \cK^{h,N} \right) + \cK^{h,N} \left( I - \cPh \right) \right] \left( \sigma_S \phi + f \right)\,.
\end{equation}
Finally, to obtain an error estimate we apply the supremum norm to \eqref{eq:phiminphih}, and by trivial manipulation write
\begin{equation} \label{eq:error_norm}
\| \phi - \phi^{h,N} \|_{\infty} \ \leq \ \| \left( I - \cK^{h,N} \cPh \sigma_S \right)^{-1} \|_{\C \mapsto \C} \left( \| e^N \|_\infty + \| e^{h,N} \|_\infty \right) \ .
\end{equation}
The error analysis proceeds by showing that $\| e^N \|_\infty$ and $\| e^{h,N} \|_\infty$
both approach zero as $h \to 0$, $N \to \infty$ in an appropriate way and by finding a bound on $\| \left( I - \cK^{h,N} \cPh \sigma_S \right)^{-1} \|_{\C \mapsto \C}$.  The first is done in  Sections \ref{sec:angular_consist} and \ref{sec:spatial_consist}, while the second is done in Section \ref{sec:stability}.

\subsection{Consistency under Angular Discretisation}
\label{sec:angular_consist}

Here we estimate  $e^N$ using the angular regularity (\iggb{Lemma \ref{lem:SmuMUderiv}}) and the following result from De Vore and Scott \cite{DeSc:84} (see also \cite[Prop.~3.2]{PiSc:83}):

\begin{proposition}
\label{prop:quadbound}
Consider the $N$-point Gauss-Legendre rule on $[0,1]$ and let $m$ be a positive integer with $m \le 2N - 1$. Then we have
$$
\bigg \lvert \int_0^1 g(\mu) \ d \mu - \sum_{k=1}^N w_k g(\mu_k) \bigg \rvert \ \leq \ c N^{-m} \int_0^1 \left[\mu \left( 1 - \mu \right)\right]^{m/2} \lvert g^{(m)}(\mu) \rvert \ d \mu \ ,
$$
whenever the integral on the right hand side exists.
\end{proposition}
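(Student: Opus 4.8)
The plan is to use the defining exactness property of Gauss--Legendre quadrature to turn the quadrature error into a \emph{polynomial approximation error}, measured in a weighted $L_1$ sense, and then to quote a Jackson-type approximation theorem of the kind established by De~Vore and Scott~\cite{DeSc:84}. Let $\mathcal{P}_n$ denote the polynomials of degree at most $n$, write $R_N(g) := \int_0^1 g\,d\mu - \sum_{k=1}^N w_k g(\mu_k)$ for the error functional, and set $\varphi(\mu) := \sqrt{\mu(1-\mu)}$, so that the claimed bound reads $|R_N(g)| \le c\,N^{-m}\int_0^1 \varphi(\mu)^m |g^{(m)}(\mu)|\,d\mu$ (note $\varphi^m = [\mu(1-\mu)]^{m/2}$). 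Since $m \le 2N-1$, the rule is exact on $\mathcal{P}_{2N-1}$, in particular $R_N(p)=0$ for all $p\in\mathcal{P}_{2N-1}$, so for any such $p$, using positivity of the weights and $\sum_k w_k = 1$,
\begin{equation*}
|R_N(g)| \ = \ |R_N(g-p)| \ \le \ \int_0^1 |g-p|\,d\mu \ + \ \sum_{k=1}^N w_k\,|(g-p)(\mu_k)|\,.
\end{equation*}
(The hypothesis ``the integral on the right hand side exists'' means $\varphi^m g^{(m)} \in L_1(0,1)$ with $g\in C^{m-1}[0,1]$ and $g^{(m-1)}$ locally absolutely continuous on $(0,1)$; $g^{(m)}$ itself need \emph{not} be integrable up to the endpoints.) Everything now rests on choosing $p$ so that both terms are bounded by $c\,N^{-m}\int_0^1 \varphi^m |g^{(m)}|$.

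The first tool is a weighted polynomial approximation theorem: for each $n$ there exists $p=p_n\in\mathcal{P}_n$ satisfying the $L_1$ bound $\int_0^1 |g-p_n|\,d\mu \le c\,n^{-m}\int_0^1 \varphi^m|g^{(m)}|\,d\mu$ together with a companion \emph{pointwise} estimate in which the local error at $\mu$ is controlled by the local size of $\varphi^m g^{(m)}$ at scale $\Delta_n(\mu) := \varphi(\mu)/n + n^{-2}$; this is precisely the content of the De~Vore--Scott weighted $L_1$ approximation results. We take $n = 2N-1$ (so $n\asymp N$ and $m\le n$), which makes the $L_1$ term immediately of the required order. For the node sum we need the second tool, the classical asymptotics of the Gauss--Legendre rule: the extreme nodes lie at distance $\asymp N^{-2}$ from $0$ and $1$, consecutive nodes are $\asymp \varphi(\mu_k)/N$ apart, and $w_k \le c\,N^{-1}\varphi(\mu_k)$. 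With these, $\sum_k w_k\,|(g-p_n)(\mu_k)|$ is comparable to a Riemann sum for $\int_0^1 \varphi(\mu)^{-1}|(g-p_n)(\mu)|\,d\mu$, and substituting the pointwise approximation bound converts it into $c\,N^{-m}\int_0^1 \varphi^m |g^{(m)}|$, closing the estimate.

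The main obstacle, and the only place where anything delicate happens, is the behaviour near $\mu=0$ and $\mu=1$. Away from the endpoints, say on $\{\mu(1-\mu)\ge N^{-2}\}$, one has $\Delta_n(\mu)\asymp\varphi(\mu)/n$, hence $\Delta_n(\mu)^m\asymp n^{-m}\varphi(\mu)^m$, and the argument reduces to a standard unweighted Jackson estimate plus a routine Riemann-sum comparison. In the two boundary layers of width $\asymp N^{-2}$, however, three effects occur simultaneously: the target weight $\varphi^m$ decays like $N^{-m}$, the achievable polynomial approximation rate degrades from $O(N^{-m})$ to $O(N^{-2m})$, and the Gauss nodes crowd in at scale $N^{-2}$ with weights as small as $N^{-2}$. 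These must be balanced against one another --- and it is exactly this balance that forces the exponent $m/2$ on the weight, $\varphi^m$ being the geometric interpolant between the bulk rate $N^{-m}$ and the endpoint rate $N^{-2m}$ --- so one cannot afford any crude bound there and must track the endpoint node/weight asymptotics together with the (possibly singular) endpoint behaviour of $g^{(m)}$.

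An alternative route avoids polynomial approximation altogether: since $R_N$ annihilates $\mathcal{P}_{m-1}$ (as $m-1\le 2N-2$), Peano's theorem gives
\begin{equation*}
R_N(g) \ = \ \int_0^1 K_m(t)\,g^{(m)}(t)\,dt\,, \qquad K_m(t) \ = \ \frac{1}{(m-1)!}\left( \frac{(1-t)^m}{m} - \sum_{\mu_k>t} w_k (\mu_k-t)^{m-1} \right)\,,
\end{equation*}
and the whole statement reduces to the single kernel bound $|K_m(t)| \le c\,N^{-m}[t(1-t)]^{m/2}$ for $t\in(0,1)$. In the bulk this follows from the exactness and standard estimates; near $t=0,1$ it again comes down to the endpoint asymptotics of the nodes and weights, so this route runs into exactly the same difficulty. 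Either way, I expect the endpoint analysis to be essentially the whole proof.
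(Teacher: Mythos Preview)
The paper does not actually prove this proposition: it is stated as a quoted result from De~Vore and Scott~\cite{DeSc:84} (with a pointer also to \cite[Prop.~3.2]{PiSc:83}), and is used as a black box in the proof of Theorem~\ref{thm:eN}. So there is no ``paper's own proof'' to compare against.

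That said, your sketch is a faithful outline of the De~Vore--Scott argument itself: reduce the quadrature error to a weighted $L_1$ polynomial approximation error via exactness on $\mathcal{P}_{2N-1}$, invoke the weighted Jackson-type estimate with the weight $\varphi(\mu)=\sqrt{\mu(1-\mu)}$, and control the node sum using the Gauss--Legendre node/weight asymptotics near the endpoints. Your identification of the endpoint layer of width $\asymp N^{-2}$ as the crux, and of the Peano-kernel route as an equivalent reformulation leading to the same difficulty, is accurate. If you wanted to turn this into a self-contained proof you would need to supply (or cite precisely) the pointwise companion estimate for $|g-p_n|$ at scale $\Delta_n(\mu)$ and the standard bounds $\mu_1 \asymp N^{-2}$, $w_k \le c\,N^{-1}\varphi(\mu_k)$; but for the purposes of this paper a citation is all that is required.
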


\begin{notation} \label{not:DGR}
In this paper the particular case of \eqref{eq:quad_generic} where the $N$-point Gauss-Legendre rule is used on both  $[-1,0]$ and on $[0,1]$, is called   the  {\em double Gauss rule}.
\end{notation}

\begin{theorem} 
\label{thm:eN}
Let $\cK^N$ be defined by \eqref{eq:KNdef} using the double Gauss rule. Then,
\begin{equation}
\label{eq:bound_angle}
\| \cK - \cK^N \|_{L_\infty \mapsto \C} \ \leq \ c \,  \sgmin^{-1} N^{-1} \ .
\end{equation}
\end{theorem}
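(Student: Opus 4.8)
The plan is to deduce the $N^{-1}$ rate directly from the angular regularity estimate of Lemma~\ref{lem:SmuMUderiv} by feeding it into the Gauss--Legendre quadrature error bound of Proposition~\ref{prop:quadbound} with $m = 1$. Fix $x \in [0,1]$ and $g \in L_\infty$, and regard $\mu \mapsto (\cSmu g)(x)$ as a scalar function of $\mu$. Since $\cK^N$ is built from the \emph{double} Gauss rule, the error $\cK g(x) - \cK^N g(x)$ splits into the error of the $N$-point Gauss--Legendre rule on $[0,1]$ applied to $\mu \mapsto (\cSmu g)(x)$, plus the error of the $N$-point Gauss--Legendre rule on $[-1,0]$ applied to the same function, the latter becoming a rule on $[0,1]$ after the substitution $\mu \mapsto -\mu$. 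To each of these two one-dimensional quadrature errors I would apply Proposition~\ref{prop:quadbound} with $m = 1 \le 2N-1$; its hypothesis that the weighted integral on the right-hand side is finite causes no trouble, since $(\cSmu g)(x)$ is differentiable in $\mu$ on $[-1,1]\setminus\{0\}$ (as in the proof of Lemma~\ref{lem:SmuMUderiv}) and the only possible singularity, at the endpoint $\mu = 0$, is integrable against the Jacobi-type weight $[\mu(1-\mu)]^{1/2}$ precisely by Lemma~\ref{lem:SmuMUderiv}.

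Carrying this out, Proposition~\ref{prop:quadbound} with $m=1$ bounds the $[0,1]$-contribution by
\[
c\, N^{-1} \int_0^1 \big[\mu(1-\mu)\big]^{1/2}\, \Big| \tfrac{\partial}{\partial \mu}(\cSmu g)(x)\Big| \, \rd\mu \ \le \ c\, N^{-1} \int_0^1 |\mu|^{1/2}\, \Big| \tfrac{\partial}{\partial \mu}(\cSmu g)(x)\Big| \, \rd\mu ,
\]
using $[\mu(1-\mu)]^{1/2} \le \mu^{1/2}$ on $[0,1]$, and the $[-1,0]$-contribution is bounded analogously, its rescaled Jacobi weight corresponding to $[\,|\mu|(1-|\mu|)\,]^{1/2} \le |\mu|^{1/2}$. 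Summing the two contributions (and absorbing the factor $\tfrac12$ from the definition of $\cK$) and taking the supremum over $x \in [0,1]$, Lemma~\ref{lem:SmuMUderiv} with $\beta = \tfrac12$ gives $\sup_{x}\int_{-1}^1 |\mu|^{1/2}\, |\tfrac{\partial}{\partial\mu}(\cSmu g)(x)|\,\rd\mu \le 4\,\sgmin^{-1}\|g\|_\infty$, whence $\|\cK g - \cK^N g\|_\infty \le c\, \sgmin^{-1} N^{-1}\|g\|_\infty$. Finally, $\cK g \in \C$ by Theorem~\ref{thm:mapbnds}(ii), while $\cK^N g = \tfrac12 \sum_{|k|=1}^N w_k\,\cS_{\mu_k} g$ is continuous as a finite combination of functions in $\C$ (Lemma~\ref{lem:SmuXderiv}, since every node $\mu_k \ne 0$), so $\cK g - \cK^N g \in \C$ and the displayed estimate is exactly the asserted operator-norm bound $\|\cK - \cK^N\|_{\LinftoC} \le c\,\sgmin^{-1}N^{-1}$.

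The one delicate point is the degeneracy of \eqref{eq:transport_det} at $\mu = 0$: one needs $\mu = 0$ to be an \emph{endpoint} of each sub-rule rather than an interior node — which is exactly why the double Gauss rule is chosen — and one needs the endpoint blow-up $|\tfrac{\partial}{\partial\mu}(\cSmu g)(x)| \sim |\mu|^{-1}$ to be matched by the weight $[\mu(1-\mu)]^{1/2}$ at the borderline exponent $\beta = \tfrac12$ in Lemma~\ref{lem:SmuMUderiv}. This is also why larger $m$ does not improve matters: $\partial_\mu^m (\cSmu g)(x)$ grows like $|\mu|^{-m}$ near $0$, which the weight $[\mu(1-\mu)]^{m/2}$ only just controls, so the rate cannot beat $N^{-1}$.
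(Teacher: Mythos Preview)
Your proposal is correct and follows essentially the same route as the paper: split via the double Gauss rule into the $[0,1]$ and $[-1,0]$ contributions, apply Proposition~\ref{prop:quadbound} with $m=1$, bound $[\mu(1-\mu)]^{1/2}\le |\mu|^{1/2}$, and invoke Lemma~\ref{lem:SmuMUderiv} with $\beta=\tfrac12$. Your justification that $(\cK-\cK^N)g\in\C$ via Theorem~\ref{thm:mapbnds}(ii) for $\cK g$ and Lemma~\ref{lem:SmuXderiv} for the finite sum $\cK^N g$ is in fact slightly cleaner than the paper's terse appeal to Lemma~\ref{lem:SmuXderiv} alone.
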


\begin{proof}
Using \eqref{eq:Kdef}, \eqref{eq:KNdef}, the (anti-)symmetry properties of the double Gauss rule, then Proposition \ref{prop:quadbound} (with $m = 1$) and finally \iggb{Lemma \ref{lem:SmuMUderiv}} (with $\beta = 1/2$), we obtain for any $g \in L_\infty$, 
\begin{align*}
| \left( \cK - \cK^N \right) g(x) | \ &= \ \frac{1}{2} \bigg | \int_{0}^1 \left( \cSmu + \cS_{-\mu} \right) g(x) \ d \mu - \sum_{k=1}^N \left( w_k \cSmuk + w_{-k} \cS_{\mu_{-k}} \right) g(x) \bigg | \\
&\leq \ \frac{1}{2} \bigg | \int_{0}^1  \cSmu g(x) \ d \mu - \sum_{k=1}^N w_k  \cS_{\mu_k} g(x) \bigg | + \frac{1}{2} \bigg | \int_{0}^1 \cS_{-\mu} g(x) \ d \mu - \sum_{k=1}^N w_k \cS_{-\mu_{k}} g(x) \bigg | \\
&\leq \ c N^{-1} \left[ \int_0^1 \mu^{1/2} \bigg \lvert \frac{\partial}{\partial \mu} \left( \cSmu g(x) \right) \bigg \rvert \ d \mu + \int_{-1}^0 (-\mu)^{1/2} \bigg \lvert \frac{\partial}{\partial \mu} \left( \cSmu g(x) \right) \bigg \rvert \ d \mu \right]\\ 
&\leq \ c N^{-1} \int_{-1}^1 |\mu|^{1/2} \bigg \lvert \frac{\partial}{\partial \mu} \left( \cSmu g(x) \right) \bigg \rvert \ d \mu \ \leq \ \ c N^{-1} \sgmin^{-1} \| g \|_\infty \ .
\end{align*}
Hence, $\left( \cK - \cK^N \right): L_\infty \mapsto L_\infty$ satisfies the bound in \eqref{eq:bound_angle}. The extension to $\C$ holds because, by \iggb{Lemma \ref{lem:SmuXderiv}}, $\cSmu$ maps from $L_\infty$ to $\C$. 
\end{proof}

\begin{corollary}
\label{cor:eN}
Under the conditions of Theorem~\ref{thm:eN}, $e^N \in \C$,  with the bound
$$
\| e^N \|_\infty \ \leq \ c \frac{\brov{\sgmax}}{\sgmin} \sgmin^{-1/2}
\cRone N^{-1} \| f \|_\infty \ .
$$
\end{corollary}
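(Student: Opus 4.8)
The statement to prove is Corollary~\ref{cor:eN}, which bounds $\|e^N\|_\infty$ where $e^N = (\cK - \cK^N)(\sigma_S\phi + f)$.

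The plan is straightforward: combine the operator bound from Theorem~\ref{thm:eN} with a bound on $\|\sigma_S\phi + f\|_\infty$. First I would apply Theorem~\ref{thm:eN} to the function $\sigma_S\phi + f \in \C_{pw} \subset L_\infty$, giving
\begin{equation}
\|e^N\|_\infty \ = \ \|(\cK - \cK^N)(\sigma_S\phi + f)\|_\infty \ \leq \ c\,\sgmin^{-1} N^{-1}\,\|\sigma_S\phi + f\|_\infty \ .
\end{equation}
Here I should check that $\sigma_S\phi + f$ is indeed in the class to which Theorem~\ref{thm:eN} applies: $\phi \in \C^\xi \subset \C$ by Corollary~\ref{cor:phibnd}, $\sigma_S \in \C^\eta_{pw}$ and $f \in \C^\eta_{pw}$ by Assumption~\ref{ass:cross}, so the product and sum lie in $L_\infty$, and moreover $(\cK - \cK^N)$ maps into $\C$ by the last sentence of the proof of Theorem~\ref{thm:eN}; this also gives the claimed regularity $e^N \in \C$.

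Next I would estimate $\|\sigma_S\phi + f\|_\infty$ by the triangle inequality:
\begin{equation}
\|\sigma_S\phi + f\|_\infty \ \leq \ \sgSmax\,\|\phi\|_\infty + \|f\|_\infty \ .
\end{equation}
Then I invoke the first bound in Corollary~\ref{cor:phibnd}, namely $\|\phi\|_\infty \leq c\,\sgmin^{-1/2}\cRone\|f\|_\infty$, so that
\begin{equation}
\|\sigma_S\phi + f\|_\infty \ \leq \ c\,\sgSmax\,\sgmin^{-1/2}\cRone\|f\|_\infty + \|f\|_\infty \ .
\end{equation}
Since $\cRone \geq 1$ (as is clear from its definition in \eqref{eq:R3}, being a product of quantities each $\geq 1$) and $\sgSmax \leq \sgmax \leq \brov{\sgmax}$, the first term dominates up to constants, and one can absorb the $\|f\|_\infty$ term to get $\|\sigma_S\phi + f\|_\infty \leq c\,\brov{\sgmax}\,\sgmin^{-1/2}\cRone\|f\|_\infty$. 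Substituting back yields
\begin{equation}
\|e^N\|_\infty \ \leq \ c\,\sgmin^{-1} N^{-1}\,\brov{\sgmax}\,\sgmin^{-1/2}\cRone\|f\|_\infty \ = \ c\,\frac{\brov{\sgmax}}{\sgmin}\,\sgmin^{-1/2}\cRone\,N^{-1}\|f\|_\infty \ ,
\end{equation}
which is exactly the claimed bound.

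There is no real obstacle here; this is a routine assembly of earlier estimates. The only mild care needed is in the bookkeeping of the cross-section factors — ensuring $\sgSmax$ is correctly bounded in terms of $\brov{\sgmax}$ and that the additive $\|f\|_\infty$ is absorbed using $\cRone \geq 1$ — and in confirming that the hypotheses of Theorem~\ref{thm:eN} (the double Gauss rule) are in force, which they are by assumption throughout Section~\ref{sec:angular_consist}.
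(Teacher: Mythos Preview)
Your proof is correct and follows essentially the same approach as the paper: apply Theorem~\ref{thm:eN} to $\sigma_S\phi+f$, use the triangle inequality and the bound $\|\phi\|_\infty \le c\,\sgmin^{-1/2}\cRone\|f\|_\infty$ from Corollary~\ref{cor:phibnd}, and absorb the lone $\|f\|_\infty$ term using $\cRone\ge 1$ and $\sgSmax\le\brov{\sgmax}$. The paper's proof is terser but identical in structure.
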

\begin{proof}
By \eqref{eq:eMN}, Theorem~\ref{thm:eN}  and Corollary \ref{cor:phibnd}, we obtain
\begin{align*}
\| e^N \|_\infty \ \leq \ c  N^{-1} \sgmin^{-1} \left( \sgmax \| \phi \|_{\infty} + \| f \|_{\infty} \right)
\leq \ c  N^{-1} \sgmin^{-1} \left( \sgmax \sgmin^{-1/2} \cRone + 1 \right) \| f \|_{\infty} , 
\end{align*}
from which the estimate follows.
\end{proof}

\subsection{Consistency under Spatial Discretisation}
\label{sec:spatial_consist}
From now on, for convenience we make the following quasi-uniformity assumption on the spatial mesh:
\begin{assumption} \label{ass:QU}
  For some constant $\quasiunif \geq 1$, the local mesh diameters   $h_j : = x_j - x_{j-1}$ satisfy 
\begin{equation}
\label{eq:quasiunif}
\max_{j = 1, ...,  M} h_j \ =: \ h \ \leq \ \quasunif \min_{j = 1,...,M} h_j \ .
\end{equation}
\end{assumption}
\noindent We recall that when  $g \in \C_{pw}$, we have   
$\cPh g \in  \C_{pw}$ and   the following  elementary bounds:
\begin{equation}
\label{eq:perbnds} 
\| \cPh g \|_{\infty} \leq \| g \|_\infty\, \quad  \text{and} \quad \|
(I - \cPh) g \|_{\infty} \ \leq \ h^\xi \| g \|_{\xi,pw}\; ,  \
\text{ when} \ g \in \C^\xi_{pw} \; , \   \text{with} \   \xi\in(0,1].
\end{equation}

\begin{lemma}
\label{lem:Smuhstab}
Let $\mu \in [-1,1] \backslash \{ 0 \}$.
For $\cSmu^h$ defined by \eqref{eq:ptr_approx}, $\cSmu^h: L_\infty \mapsto V^h \subset \C$ and
\begin{align} \label{lemresult} 
\| \cSmu^h \|_{L_\infty \mapsto V^h} \ \leq \ 2 \quasunif \sgmin^{-1} \left( 1 + \sgmax \frac{h}{| \mu |} \right) \ .
\end{align} 
\end{lemma}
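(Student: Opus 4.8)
The plan is to reduce \eqref{eq:ptr_approx} to an explicit two-term recursion for the nodal values of $u^h$ and then bound the resulting discrete Duhamel sum. Since $u^h\in V^h$ is continuous and piecewise linear, it is determined by its nodal values $u^h_j := u^h(x_j)$, and $\|u^h\|_\infty = \max_{0\le j\le M}|u^h_j|$. Evaluating the integrals in \eqref{eq:ptr_approx} exactly — using the midpoint value $\sigma_{j-1/2}$ of $\cPh\sigma$ on $I_j$ and the trapezoidal rule (exact for linears) for $\int_{I_j}u^h$ — turns the scheme into the three-term recursion
\[
\mu\,(u^h_j-u^h_{j-1}) + \sigma_{j-1/2}\tfrac{h_j}{2}\,(u^h_j+u^h_{j-1}) \ = \ G_j := \int_{I_j} g\,, \qquad j=1,\dots,M\,.
\]
I treat $\mu>0$ (the case $\mu<0$ is symmetric, sweeping from $x_M$ to $x_0$); here the boundary condition is $u^h_0=0$, so the recursion solves forward as $u^h_j = a_j u^h_{j-1} + b_j G_j$ with $a_j := (\mu-\sigma_{j-1/2}h_j/2)/(\mu+\sigma_{j-1/2}h_j/2)$ and $b_j := (\mu+\sigma_{j-1/2}h_j/2)^{-1}$, hence $u^h_j = \sum_{i=1}^j\big(\prod_{l=i+1}^j a_l\big)\,b_i G_i$.

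Two elementary observations carry the estimate: (a) $a_j\in[-1,1]$ for every $j$, so every partial product $\prod_{l}|a_l|$ is at most $1$; and (b) the identity $1-a_j = \sigma_{j-1/2}h_j\,b_j$, equivalently $b_j h_j = (1-a_j)/\sigma_{j-1/2}$. Combining these with $|G_i|\le h_i\|g\|_\infty$ and $\sigma_{j-1/2}\ge\sgmin$ gives
\[
|u^h_j| \ \le\ \|g\|_\infty\sum_{i=1}^j\Big(\prod_{l=i+1}^j|a_l|\Big)\frac{1-a_i}{\sigma_{i-1/2}} \ \le\ \frac{\|g\|_\infty}{\sgmin}\,T_j\,, \qquad T_j := \sum_{i=1}^j\Big(\prod_{l=i+1}^j|a_l|\Big)(1-a_i)\,.
\]
It therefore remains to show $T_j\le 1+\sgmax h/\mu$. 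Writing $1-a_i = (1-|a_i|) + (|a_i|-a_i)$ and $P_i := \prod_{l=i}^j|a_l|$ (with $P_{j+1}=1$), the $(1-|a_i|)$-contribution has $i$-th term $P_{i+1}-P_i$ and telescopes to $1-\prod_{l=1}^j|a_l|\le 1$.

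The only place any real work is needed is the $(|a_i|-a_i)$-contribution, which is nonzero only at indices $i$ with $a_i<0$, i.e.\ with $\sigma_{i-1/2}h_i/2 > \mu$; this forces $\mu<\sgmax h/2$. At each such index $a_i<0$ one checks that $|a_i| \le q := 1 - 2\mu/(\sgmax h)\in(0,1)$ (using $\sigma_{i-1/2}h_i/2\le\sgmax h/2$), and the $i$-th term equals $2P_i$. Enumerating the negative-$a$ indices in $[1,j]$ as $i_1<\dots<i_p$, one has $P_{i_m}\le q^{\,p-m+1}$, so a geometric-series estimate bounds this contribution by $2\sum_{m=1}^p q^{\,p-m+1} \le 2q/(1-q)\le \sgmax h/\mu$. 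Hence $T_j\le 1+\sgmax h/\mu$, giving $\|u^h\|_\infty\le\sgmin^{-1}(1+\sgmax h/|\mu|)\|g\|_\infty$, which is the claimed bound (in fact even without the factor $2\quasunif$); that $\cSmu^h g\in V^h\subset\C$ holds by construction. I expect the sign bookkeeping for $a_i$ and the indexing of the telescoping and geometric sums to be the only delicate points; the quasi-uniformity assumption is not actually needed for this particular estimate.
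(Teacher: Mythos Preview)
Your argument is correct and in fact sharper than the paper's: you obtain the bound without the factor $2\quasunif$, so quasi-uniformity of the mesh is indeed not needed here. The key difference from the paper's proof is the treatment of the discrete Duhamel sum. The paper seeks a \emph{uniform} bound $|r_j|\le r<1$ on the multipliers, then estimates the sum by a single geometric series with ratio $r$; finding such an $r$ forces a three-way case split on the relative size of $h$ and $\mu$, and in the middle case the uniform $r$ picks up the quasi-uniformity constant $\quasunif$ (details deferred to \cite{Pa:18}). Your approach instead exploits the exact identity $b_jh_j=(1-a_j)/\sigma_{j-1/2}$ and the decomposition $1-a_i=(1-|a_i|)+(|a_i|-a_i)$: the first piece telescopes to at most $1$ regardless of the individual $a_i$, and only the indices with $a_i<0$ require a geometric estimate, for which the local bound $|a_i|\le 1-2\mu/(\sgmax h)$ suffices. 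This avoids any uniform control of the multipliers and hence any appeal to mesh regularity. A small remark: strictly one has $a_j\in(-1,1)$ rather than $[-1,1]$, and in fact $2q/(1-q)=\sgmax h/\mu-2$, so the bound $T_j\le 1+\sgmax h/\mu$ even has a little slack.
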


\begin{proof}
Without loss of generality, assume $\mu > 0$,  let $g \in L_\infty$ and let $u^h = \cS_\mu^hg$. Using the notation $\alpha_j = h_j \sigma_{j-1/2} / (2 \mu)$, by  \eqref{eq:ptr_approx} the nodal values of $u^h$  satisfy
\begin{equation} \label{eq:muhstab1}
\mu \left( 1 + \alpha_j \right) U_j = \mu \left( 1 - \alpha_j \right) U_{j-1} + \int_{I_j} g\ , \quad j=1,\ldots,M \ .
\end{equation}
Then, using the notation
$p_j = 1-\alpha_j$, $q_j = 1 + \alpha_j$ and $r_j = p_j / q_j$,
\eqref{eq:muhstab1} becomes
\begin{align} \label{eq:Uj}
U_j \ = \ r_j U_{j-1} + \frac{1}{\mu q_j} \int_{I_j} g \ .
\end{align}
To obtain a  bound on  $\vert U_j\vert $ for all $j$, suppose for the moment   
that  there exist $r, q \ge 0$ such that
\begin{equation} \label{eq:bndsrq}
\mathrm{(i)} \ \   | r_j | \leq \ r \ < \ 1\ ,  \quad \text{and} \quad  
\mathrm{(ii)} \ \ q_j \ \geq \ q \ \geq \ 1 \ ,  \quad \text{for all } \ 1 \leq j \leq M \ .
\end{equation}
Then a simple inductive argument on \eqref{eq:Uj} and using  $U_0 = 0$,
yields the estimates 
\begin{align}
  | U_j | \
           &
            \leq \
            \left(\sum_{i=1}^j r^{j-i}\right) \frac{h}{\mu q} \| g
             \|_{\infty} \,, \quad \text{for each} \ j,  \  \text{and} \quad \Vert \cS_\mu^h \Vert_\infty  \leq \ \frac{h}{\mu q (1 - r)} \| g \|_{\infty} \  . \label{eq:bnd123}
\end{align}

Finding  $q$ in \eqref{eq:bndsrq} is trivial, since  
$q_j \geq q := \max\{  1, \; \sgmin \frac{h}{2 \quasunif \mu}
\} \geq  1.$
Some  elementary but technical manipulations (details are in \cite[\S 3.3.2]{Pa:18})  show that \eqref{eq:bndsrq}(i) 
is satisfied,  with
\begin{equation} \label{eq:definer}
r \ = \ \begin{cases} 
\left(1 + \frac{\sgmin h}{2 \quasunif \mu} \right)^{-1} & \ \text{if } h \leq 2\mu / \sgmax \ , \\ 
\left(1 + \frac{2 \mu}{\sgmax h} \right)^{-1} & \ \text{if } h \geq 2 \quasunif \mu / \sgmin \  , \\ 
\left( 1 - \frac{\sgmin}{\sgmax \quasunif} \right) \ / \ \left(1 + \frac{\sgmin}{\sgmax \quasunif} \right) & \ \text{otherwise}  \ .
\end{cases}
\end{equation}

To finish the proof, we have to estimate $h/(\mu q(1-r))$ by the right-hand side of \eqref{lemresult}.
To illustrate this,  consider, for example, the case  $2 \mu / \sgmax \leq h \leq 2 \quasunif \mu  / \sgmin$. Then  $r$ is given by the third equation in \eqref{eq:definer} and  
$$
\frac{1}{q (1 - r)} \ \leq \ \frac{1}{2} \left( 1 + \quasunif \frac{\sgmax }{\sgmin} \right) \ = \
\frac{\quasunif }{\sgmin} \left( \frac{\sgmin}{2 \quasunif}  + \frac{\sgmax}{2} \right) \ < \ 
\frac{\quasunif }{\sgmin} \left( \frac{\mu}{h}  + \frac{\sgmax}{2}\right) \ .
$$
Multiplying each side by $h/\mu$ leads to the required estimate.   
The proof of \eqref{lemresult}   for the other values of $r$ is similar.
\end{proof}

\begin{lemma} 
\label{lem:uhuhat}
For $g \in L_\infty$, let $u = \cSmu g$,  $u^h = \cSmu^h g$, and let $\uhat$ denote
  the piecewise linear interpolant \igg{of}  $u$ on the mesh \eqref{eq:mesh}. Then 
$$
\| u^h - \uhat \|_\infty \ \leq \ \| \cSmu^h \|_{L_\infty \mapsto V^h} \left( \sgmax \| u - \uhat \|_\infty + h^\eta \| \sigma \|_{\eta,pw} \| u \|_{\infty} \right) \ . 
$$
\end{lemma}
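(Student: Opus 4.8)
The plan is to derive the estimate from the defining relations of $u$, $u^h$ and $\uhat$. Recall $u = \cSmu g$ solves \eqref{eq:ptr}, while $u^h = \cSmu^h g$ solves the Galerkin-type relations \eqref{eq:ptr_approx}. The key observation is that $u^h - \uhat$ is a piecewise-linear function, so it lies in $V^h$, and we can try to express it as $\cSmu^h$ applied to some explicit right-hand side. Then we invoke the stability bound from Lemma \ref{lem:Smuhstab} to convert that into the stated estimate.

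First I would write down what $\uhat$ ``solves'': since $\uhat$ interpolates $u$ at the mesh nodes and $u$ satisfies $\mu u' + \sigma u = g$, integrating over $I_j$ gives $\mu(\uhat(x_j) - \uhat(x_{j-1})) = \mu(u(x_j)-u(x_{j-1})) = \int_{I_j}(g - \sigma u)$. On the other hand, the defining relation \eqref{eq:ptr_approx} for $u^h$ reads $\int_{I_j}(\mu (u^h)' + \wsigma\, u^h) = \int_{I_j} g$, i.e. $\mu(u^h(x_j) - u^h(x_{j-1})) + \sigma_{j-1/2}\int_{I_j} u^h = \int_{I_j} g$. Subtracting, and noting that $\uhat$ is piecewise linear so $\int_{I_j}\uhat = \int_{I_j}\cPh\uhat$ up to the appropriate midpoint interpretation, I would show that $u^h - \uhat$ satisfies the same type of relation \eqref{eq:ptr_approx} with the same operator but with right-hand side $\tilde g$ equal to $\wsigma\,\uhat - \cPh(\sigma u)$ (plus boundary terms that vanish since both $u^h$ and $\uhat$ satisfy the zero inflow condition). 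Hence $u^h - \uhat = \cSmu^h \tilde g$, so that $\|u^h - \uhat\|_\infty \le \|\cSmu^h\|_{L_\infty \mapsto V^h}\,\|\tilde g\|_\infty$.

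It then remains to bound $\|\tilde g\|_\infty = \|\wsigma\,\uhat - \cPh(\sigma u)\|_\infty$. I would split this as $\|\wsigma(\uhat - u)\|_\infty + \|\wsigma\, u - \cPh(\sigma u)\|_\infty$. The first term is bounded by $\sgmax\|u - \uhat\|_\infty$ using $\|\wsigma\|_\infty \le \|\sigma\|_\infty = \sgmax$. For the second term, since $\wsigma$ is the midpoint-interpolation piecewise constant of $\sigma$, one has on each $I_j$ that $\wsigma\, u - \cPh(\sigma u) = \sigma_{j-1/2}\, u - (\sigma u)_{j-1/2} = \sigma_{j-1/2}(u - u_{j-1/2})$, whence this is bounded by $\|\sigma\|_\infty \|u - \cPh u\|_\infty \le \sgmax\, h^\eta \|\sigma\|_{\eta,pw}\|u\|_\infty$ — here I would want to be slightly careful and instead estimate $\wsigma u - \cPh(\sigma u)$ directly via the Hölder modulus of $\sigma$, giving a factor $h^\eta \|\sigma\|_{\eta,pw}\|u\|_\infty$ as in \eqref{eq:perbnds}. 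Combining the two pieces yields exactly the stated bound.

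The main obstacle is the bookkeeping around the midpoint/piecewise-constant operator $\cPh$ and the piecewise-linear interpolant $\uhat$: one must verify carefully that $u^h - \uhat$ really does solve a clean instance of \eqref{eq:ptr_approx} (in particular that the contribution of $\uhat$ through the $\int_{I_j}\wsigma\,\uhat$ term can be matched against $\int_{I_j}\cPh(\sigma u)$ without error-generating cross terms beyond those accounted for), and that the zero-inflow boundary data is inherited exactly so no boundary contribution appears in $\tilde g$. Once that identification is pinned down, the rest is the routine triangle-inequality splitting above together with the already-established stability bound of Lemma \ref{lem:Smuhstab}.
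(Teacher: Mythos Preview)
Your overall strategy---write $u^h - \uhat = \cSmu^h \tilde g$ for an explicit $\tilde g$, then bound $\|\tilde g\|_\infty$ using the stability of $\cSmu^h$---is exactly the paper's approach. But your identification of $\tilde g$ is off, and this derails the final estimate.

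Subtracting the two relations as you describe gives
\[
\int_{I_j}\Big(\mu\,(u^h-\uhat)' + \wsigma\,(u^h-\uhat)\Big) \;=\; \int_{I_j}\big(\sigma u - \wsigma\,\uhat\big),
\]
using that $\int_{I_j}\mu(u-\uhat)'=0$ since $u-\uhat$ vanishes at the nodes. The natural (and correct) choice is therefore $\tilde g = \sigma u - \wsigma\,\uhat$, \emph{not} $\wsigma\,\uhat - \cPh(\sigma u)$. Your replacement of $\sigma u$ by $\cPh(\sigma u)$ is not justified: the midpoint identity $\int_{I_j} v = \int_{I_j}\cPh v$ holds only for $v$ piecewise linear, which $\sigma u$ is not. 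So $\cSmu^h$ applied to your $\tilde g$ does not equal $u^h-\uhat$.

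This matters for the bound. With the correct $\tilde g$, the paper splits
\[
\sigma u - \wsigma\,\uhat \;=\; \wsigma\,(u-\uhat) + (\sigma-\wsigma)\,u,
\]
and the second term is bounded by $\|\sigma-\wsigma\|_\infty\|u\|_\infty \le h^\eta\|\sigma\|_{\eta,pw}\|u\|_\infty$ via \eqref{eq:perbnds}---using the H\"older regularity of $\sigma$, exactly as the statement demands. By contrast, your term $\wsigma\,u - \cPh(\sigma u)$ equals $\sigma_{j-1/2}\big(u(x)-u(x_{j-1/2})\big)$ on each $I_j$, and controlling this requires H\"older regularity of $u$, not of $\sigma$; your claimed inequality $\|u-\cPh u\|_\infty \le h^\eta\|\sigma\|_{\eta,pw}\|u\|_\infty$ is simply false. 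Once you drop the spurious $\cPh$ and use the split above, the proof goes through immediately.
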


\begin{proof} Recall that by (\Cref {ass:cross}), $\sigma \in \C^\eta_{pw}$, with  $\eta \in (0,1]$.
Using \eqref{eq:ptr_approx} and \eqref{eq:ptr}, 
\[
\int_{I_j} \mu \frac{d }{d x} u^h  + \wsigma \ u^h = \int_{I_j} g \ =\  \int_{I_j} \mu \frac{d}{d x}u  + \sigma u 
\]
and so
\begin{align}
\int_{I_j} \mu \frac{d }{d x} (u^h - \uhat)  + \wsigma \ ( u^h - \uhat) &= \int_{I_j} \mu \frac{d}{d x}(u - \uhat) + \sigma u - \wsigma \ \uhat\\ & =   \int_{I_j} \left( \wsigma \ (u - \uhat) + (\sigma - \wsigma) u \right) \ ,\label{eq:diff_FE_interp}
\end{align}
where the derivative {term in \eqref{eq:diff_FE_interp} vanishes because $u - \uhat$ vanishes at the mesh nodes}. Hence, 
\begin{align*}
\| u^h - \uhat \|_\infty  
&\leq \| \cSmu^h \|_{L_\infty \mapsto V^h} \left[ \| \wsigma \|_\infty \| u - \uhat \|_\infty + \| \sigma - \wsigma \|_\infty \| u \|_\infty \right] \ ,
\end{align*}
since by the definition of $\cSmu^h$ in \eqref{eq:ptr_approx} we have 
$u^h - \uhat \ = \ \cSmu^h \left[ \wsigma \ ( u - \uhat) + (\sigma -
  \wsigma) u \right]$. The result follows upon using  \eqref{eq:perbnds}.
\end{proof}

Our main deterministic error estimate (Theorem \ref{thm:phiMN}) will contain an $h \log N$ term.
The next result is the first indication of this:  the estimate in \eqref{eq:Smudiff} blows  up as $|\mu| \to 0$ for fixed $h$.

\begin{lemma} 
\label{lem:Smudiff}
There is a constant $c>0$, independent of all parameters, such that
\begin{equation} \label{eq:Smudiff}
\| \cSmu - \cSmu^h \|_{\LinftoC} \ \leq \ c \quasunif \sgmin^{-2} \left( \sgmax^2 \frac{h}{|\mu|} + \| \sigma \|_{\eta,pw} h^\eta \right) \ .
\end{equation}
\end{lemma}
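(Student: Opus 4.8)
The plan is to estimate $\| \cSmu - \cSmu^h \|_{\LinftoC}$ by triangle inequality through the piecewise-linear interpolant $\uhat$ of $u = \cSmu g$, writing
\begin{equation*}
\| (\cSmu - \cSmu^h) g \|_\infty \ \leq \ \| u - \uhat \|_\infty \ + \ \| \uhat - u^h \|_\infty \ ,
\end{equation*}
where $u^h = \cSmu^h g$. The second term is handled directly by Lemma \ref{lem:uhuhat}, so the task reduces to estimating $\| u - \uhat \|_\infty$, the interpolation error, and then combining with the stability bound of Lemma \ref{lem:Smuhstab}.

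For the interpolation error, the standard estimate gives $\| u - \uhat \|_\infty \leq c\, h \| u' \|_\infty$ on each subinterval (interpolation is exact at nodes, so the error is controlled by the derivative, or more precisely $\| u - \uhat\|_\infty \le \tfrac14 h^2 \|u''\|_\infty$ when available, but here I only expect one derivative of $u$ to be usefully bounded). By Lemma \ref{lem:SmuXderiv}, $\| u' \|_\infty = \| (\cSmu g)' \|_\infty \leq 2(\sgmax/\sgmin)|\mu|^{-1} \| g \|_\infty$, which yields
\begin{equation*}
\| u - \uhat \|_\infty \ \leq \ c\, h\, \frac{\sgmax}{\sgmin}\, |\mu|^{-1} \| g \|_\infty \ .
\end{equation*}
Then I plug this into Lemma \ref{lem:uhuhat}, together with the bound $\| u \|_\infty \leq \sgmin^{-1} \| g \|_\infty$ from Lemma \ref{lem:Smustab}, and the stability bound $\| \cSmu^h \|_{L_\infty \mapsto V^h} \leq 2\quasunif \sgmin^{-1}(1 + \sgmax h/|\mu|)$ from Lemma \ref{lem:Smuhstab}. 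This gives
\begin{equation*}
\| \uhat - u^h \|_\infty \ \leq \ 2\quasunif \sgmin^{-1}\Big(1 + \sgmax \tfrac{h}{|\mu|}\Big)\Big( \sgmax \cdot c\, h\, \tfrac{\sgmax}{\sgmin}|\mu|^{-1} + h^\eta \| \sigma \|_{\eta,pw}\, \sgmin^{-1} \Big)\| g \|_\infty \ .
\end{equation*}

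The main bookkeeping obstacle — and the step I would be most careful about — is collecting all the resulting terms and showing they are all dominated by the right-hand side of \eqref{eq:Smudiff}, namely $c\,\quasunif \sgmin^{-2}\big( \sgmax^2 h/|\mu| + \| \sigma \|_{\eta,pw} h^\eta \big)$. Expanding the product above produces cross terms like $\quasunif \sgmin^{-2} \sgmax^3 h^2/|\mu|^2$ and $\quasunif \sgmin^{-3} \sgmax h^{1+\eta}\| \sigma\|_{\eta,pw}/|\mu|$ that are not obviously of the claimed form. The resolution is that such terms only appear when $\sgmax h/|\mu|$ is the dominant factor, i.e. $h/|\mu| \gtrsim 1/\sgmax$; in that regime one absorbs one power of $\sgmax h/|\mu|$ using $\sgmax h/|\mu| \le (\sgmax h/|\mu|)\cdot(\text{something}\ge 1)$ is the wrong direction, so instead I would note $1 + \sgmax h/|\mu| \le 2\max\{1, \sgmax h/|\mu|\}$ and treat the two cases $\sgmax h/|\mu| \le 1$ and $\sgmax h/|\mu| > 1$ separately, using $\sgmin \le \sgmax$ and $h \le 1$ (also $h^2 \le h$, $h^{1+\eta}\le h^\eta$) to fold everything into the two advertised monomials with an appropriate generic constant $c$. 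Finally, since each $u^h = \cSmu^h g$ lies in $V^h \subset \C$ and $u - \uhat$ is continuous, the difference $\cSmu g - \cSmu^h g$ is in $\C$, so the bound is indeed an $\LinftoC$ operator norm bound, completing the proof.
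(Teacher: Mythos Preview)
Your overall strategy and the case split $\sgmax h/|\mu| \le 1$ versus $\sgmax h/|\mu| > 1$ are exactly right, and in the first case your argument goes through cleanly (indeed the paper uses that $\|\cSmu^h\|_{L_\infty \to V^h} \le 4\rho\sgmin^{-1}$ there and proceeds just as you do). The gap is in the second case: you propose to keep the interpolant decomposition and use $1 + \sgmax h/|\mu| \le 2\sgmax h/|\mu|$, then absorb the resulting cross terms via $h^2 \le h$ etc. But this does not close. After that substitution the dominant cross term is of order $\rho\,\sgmin^{-2}\,\sgmax^3 h^2/|\mu|^2 = \rho\,\sgmin^{-2}\,\sgmax^2 (h/|\mu|)\cdot(\sgmax h/|\mu|)$, and since you are precisely in the regime $\sgmax h/|\mu| > 1$, this is \emph{larger} than the target $\rho\,\sgmin^{-2}\,\sgmax^2 h/|\mu|$, with no uniform constant available (nothing prevents $\sgmax h/|\mu|$ from being arbitrarily large, e.g.\ when $|\mu|$ is near zero). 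The inequality $h^2 \le h$ is of no help because the offending object is $h^2/|\mu|^2$, not $h^2$.

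The paper's fix is to abandon the interpolant entirely in the case $\sgmax h/|\mu| > 1$ and instead use the crude triangle inequality
\[
\|(\cSmu - \cSmu^h)g\|_\infty \ \le \ \|\cSmu g\|_\infty + \|\cSmu^h g\|_\infty \ \le \ \sgmin^{-1}\|g\|_\infty + 2\rho\,\sgmin^{-1}\Big(1 + \sgmax \tfrac{h}{|\mu|}\Big)\|g\|_\infty
\]
from Lemmas~\ref{lem:Smustab} and~\ref{lem:Smuhstab}. Each of these three terms is then bounded by $c\,\rho\,\sgmin^{-2}\sgmax^2 h/|\mu|$ using only $\rho \ge 1$, $\sgmax/\sgmin \ge 1$, and the case assumption $\sgmax h/|\mu| > 1$, which is enough to land inside \eqref{eq:Smudiff}. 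So the missing idea is: in the bad regime, don't try to be clever with the interpolant---the stability bounds alone already give a term of size $\mathcal{O}(\sgmax h/|\mu|)$, which is exactly what the lemma allows.
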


\begin{proof}
For any $g \in L_\infty$, let $u = \cSmu g$, $u^h = \cSmu^h g$, and define  $\uhat$ as in Lemma~\ref{lem:uhuhat},
\begin{align}
  \nonumber \| \left( \cSmu - \cSmu^h \right) g \|_{\infty} \ &= \ \| u - u^h \|_{\infty} \ \leq \ \| u - \uhat \|_{\infty} + \| u^h - \uhat  \|_{\infty} \\
                                                              & \leq (1 + \sgmax \Vert \cS_\mu^h \Vert_{L_\infty \rightarrow V_h}) \Vert u - \uhat\Vert_\infty + h^\eta\Vert \cS_\mu^h \Vert_{L_\infty \rightarrow V_h} \Vert \sigma \Vert_{\eta, pw} \Vert u \Vert_\infty \;,                                                              
  \label{eq:abc2}
\end{align}
where we used Lemma \ref{lem:uhuhat}.  Now, from \iggb{Lemmas \ref{lem:Smustab} and  \ref{lem:SmuXderiv}}, 
we have
\begin{align}\label{eq:estuhat}  
\Vert u - \uhat \Vert_\infty \ \leq \ h \Vert u'\Vert_\infty \ \leq \
  2 \frac{h}{\vert \mu \vert } \frac{\sgmax}{\sgmin} \Vert g
  \Vert_\infty \quad \text{and} \quad \Vert u \Vert_\infty \ \leq \
  \sgmin^{-1} \Vert g \Vert_\infty \;. 
\end{align} 

First, consider the case $\sgmax h/\vert \mu \vert \leq 1$. Then Lemma \ref{lem:Smuhstab} gives
$\Vert \cS_\mu^h \Vert_{L_\infty \rightarrow V^h} \leq 4 \rho \sgmin^{-1} $. Combining  this and  \eqref{eq:estuhat} with   \eqref{eq:abc2} and applying  some elementary manipulation yields   \eqref{eq:Smudiff}.

If, on the other hand,   $\sgmax h / | \mu | > 1$, then by the triangle inequality and  \iggb{ Lemmas \ref{lem:Smustab}} and \ref{lem:Smuhstab} we have
\begin{equation} \label{eq:zzz}
\| \left( \cSmu - \cSmu^h \right) g \|_{\infty} \leq \| \cSmu g \|_{\infty} + \| \cSmu^h g \|_{\infty} \leq \sgmin^{-1} \| g \|_\infty + 2 \quasunif \sgmin^{-1} \left( 1 + \sgmax \frac{h}{|\mu|} \right) \| g \|_\infty \ ,
\end{equation}
which can also be bounded in the form \eqref{eq:Smudiff} by  using $\quasunif \geq 1$, $\frac{\sgmax}{\sgmin} \geq 1$ and $\frac{\sgmax h}{| \mu |} > 1$.
\end{proof}

The next lemma obtains some estimates needed  to bound $e^{h,N}$.

\begin{lemma} 
\label{lem:KminKMN}
Let $\cK^N$ and $\cK^{h,N}$ be defined by \eqref{eq:KNdef} and \eqref{eq:KMNdef} respectively, with $\mu_k$ and $w_k$ given by the double Gauss rule (Notation \ref{not:DGR}).
Under Assumption \eqref{ass:QU},  if $N \geq 2$, then
\begin{eqnarray*}
(i)  & \quad \| \cK^N - \cK^{h,N}\|_{L_\infty \mapsto \C} \ & \leq \ c \quasunif \sgmin^{-2} \left( \sgmax^2 h \log N + \| \sigma \|_{\eta,pw} h^\eta \right) \ ,\\
(ii)  & \quad \|  \cK^{h,N}\|_{L_\infty \mapsto \C} \ & \leq \ c \quasunif \sgmin^{-1}
\left( 1 +  \sgmax h \log N  \right) \ .
\end{eqnarray*}
\end{lemma}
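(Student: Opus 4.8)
\emph{Strategy.} Both estimates are obtained by passing from operators to their defining sums over the double Gauss nodes and applying the single-$\mu$ results already established, namely Lemma \ref{lem:Smuhstab} (stability of $\cSmu^h$) and Lemma \ref{lem:Smudiff} (the bound on $\cSmu - \cSmu^h$). The only genuinely new ingredient is quadrature-specific: the Gauss--Legendre nodes on $[0,1]$ cluster near the endpoints, so the smallest node $\mu_1$ is of size $\cO(N^{-2})$, and the weighted sum $\sum_{k=1}^N w_k/\mu_k$ behaves like $\cO(\log N)$. I would isolate this as a preliminary estimate and then feed it into the two bounds.

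\emph{Step 1 (quadrature preliminary).} I would first record that for the $N$-point Gauss--Legendre rule on $[0,1]$ one has $\mu_1 \asymp N^{-2}$ and, more importantly, $\sum_{k=1}^N w_k \mu_k^{-1} \leq c \log N$ for $N \geq 2$. This is the discrete analogue of the (divergent) integral $\int_0^1 \mu^{-1}\,d\mu$ being replaced by a rule that sees only $\mu \geq \mu_1 \asymp N^{-2}$; since $\cS_0$ is bounded (the $\mu=0$ case of \eqref{eq:solop}), the sum $\sum_k w_k \mu_k^{-1}$ is exactly the kind of weighted sum that grows like $\log(1/\mu_1) \asymp \log N$. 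This fact is classical for Gauss rules and I would cite it (e.g. via the known endpoint behaviour of Gauss--Legendre nodes and weights) or derive it from Proposition \ref{prop:quadbound} applied to a suitable comparison function.

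\emph{Step 2 (proof of (ii)).} From \eqref{eq:KMNdef}, $\cK^{h,N} g = \tfrac12 \sum_{|k|=1}^N w_k \cS^h_{\mu_k} g$, so by the triangle inequality and Lemma \ref{lem:Smuhstab},
\[
\|\cK^{h,N}\|_{L_\infty \mapsto \C} \ \leq \ \tfrac12 \sum_{|k|=1}^N w_k \, 2\rho\sgmin^{-1}\Bigl(1 + \sgmax \tfrac{h}{|\mu_k|}\Bigr) \ \leq \ c\rho\sgmin^{-1}\Bigl( \textstyle\sum_k w_k + \sgmax h \sum_k w_k\mu_k^{-1}\Bigr),
\]
and since $\sum_k w_k = 1$ and $\sum_k w_k\mu_k^{-1} \leq c\log N$ by Step 1, this is $\leq c\rho\sgmin^{-1}(1 + \sgmax h\log N)$, as claimed. (The membership in $\C$ follows since each $\cS^h_{\mu_k} g \in V^h \subset \C$.)

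\emph{Step 3 (proof of (i)).} Write $\cK^N - \cK^{h,N} = \tfrac12\sum_{|k|=1}^N w_k(\cS_{\mu_k} - \cS^h_{\mu_k})$. Apply Lemma \ref{lem:Smudiff} termwise:
\[
\|\cK^N - \cK^{h,N}\|_{L_\infty\mapsto\C} \ \leq \ \tfrac12\sum_{|k|=1}^N w_k \cdot c\rho\sgmin^{-2}\Bigl(\sgmax^2\tfrac{h}{|\mu_k|} + \|\sigma\|_{\eta,pw}h^\eta\Bigr),
\]
and again use $\sum_k w_k\mu_k^{-1} \leq c\log N$ and $\sum_k w_k = 1$ to get the bound in (i). The $\C$-valued conclusion is inherited from $\cS_{\mu_k} - \cS^h_{\mu_k}$ mapping $L_\infty$ into $\C$ (Lemmas \ref{lem:SmuXderiv}, \ref{lem:Smuhstab}).

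\emph{Main obstacle.} The crux is Step 1 — establishing $\sum_{k=1}^N w_k\mu_k^{-1} \leq c\log N$ with a constant independent of $N$. Everything else is a mechanical termwise application of previously proved single-ordinate estimates. One must be slightly careful that the comparison is genuinely $\log N$ and not, say, $\log^2 N$ or a power of $N$; the key quantitative input is that the smallest Gauss node is $\asymp N^{-2}$ while the corresponding weight is $\asymp N^{-2}$ as well, so the first term $w_1/\mu_1$ is $\cO(1)$, and the tail sums telescope like a logarithm. I would either invoke a standard reference for Gauss--Legendre node/weight asymptotics or, to keep the paper self-contained, compare $\sum_k w_k\mu_k^{-1}$ against $\int_{\mu_1}^1 \mu^{-1}\,d\mu$ plus a correction controlled by Proposition \ref{prop:quadbound}.
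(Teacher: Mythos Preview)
Your proposal is correct and follows essentially the same approach as the paper: apply the single-ordinate estimates (Lemma \ref{lem:Smuhstab} for (ii), Lemma \ref{lem:Smudiff} for (i)) termwise to the quadrature sum, then control $\sum_{|k|=1}^N w_k|\mu_k|^{-1}$ by $c\log N$ using $\mu_1 \asymp N^{-2}$. The paper handles your Step~1 by invoking \cite[Lemma~3.1]{PiSc:83} rather than proving it from scratch, and notes $\sum_{|k|=1}^N w_k = 2$ (not $1$), but this is absorbed into the generic constant.
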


\begin{proof}
Let  $g \in L_\infty$.  Using Lemma \ref{lem:Smudiff}, we have $ (\cK^N - \cK^{h,N} )g \in \C$ and 
\begin{align}
  \| \left( \cK^N - \cK^{h,N} \right) g \|_\infty &\leq c \sum_{|k|=1}^N w_k \| \cSmuk - \cSmuk^h \|_
                                                    {L_\infty \rightarrow \C}  \| g \|_\infty \nonumber \\
&\leq c \quasunif \sgmin^{-2} \sum_{|k|=1}^N w_k \left( \sgmax^2 \frac{h}{| \mu_k|} + \| \sigma \|_{\eta,pw} h^\eta \right) \, \| g \|_\infty  \ . \label{eq:endline}
\end{align}

Since the double Gauss rule integrates constants exactly, we have 
$\sum_{|k|=1}^N w_k = 2$. Also  \cite[Lemma 3.1]{PiSc:83} gives the estimate 
\begin{equation} \label{eq:sum_sing}
\sum_{|k|=1}^N w_k | \mu_k |^{-1} \ \leq \ c \left( 1 + | \log \mu_1 |
\right) \ \leq c \left( 1 + \log N \right) ,
\end{equation}
where the last inequality follows because $\mu_1 \sim N^{-2}$ for the Gauss  rule on $[0,1]$. 
Substituting these estimates into \eqref{eq:endline} yields the result (i). To obtain (ii), we proceed similarly:
\begin{align}
\| \cK^{h,N} g \|_\infty &\ \leq \ c \sum_{|k|=1}^N w_k \| \cSmuk^h \|_
                           {L_\infty \rightarrow \C} \,   \| g \|_\infty \ \leq \ c \rho \sgmin^{-1}
                          \sum_{|k|=1}^N w_k \left(1 + \sgmax \frac{h}{\vert \mu_k \vert}\right)  \| g \|_\infty \;, \nonumber 
\end{align}
from which the result follows analogously to (i). 
\end{proof}

\begin{theorem}
\label{thm:eMN}
Suppose the assumptions of Lemma \ref{lem:KminKMN} hold. Then, for $e^{h,N}$ defined in \eqref{eq:eMN}, 
$$
\| e^{h,N} \|_\infty \ \leq \ c \quasunif \sgmin^{-2} \brov{\| \sigma_S \|_{\eta,pw}}\cRtwo \left( \sgmax^2 h \log N + h^\eta  \| \sigma \|_{\eta,pw} \right) \| f \|_{\eta,pw} \ .
$$
\end{theorem}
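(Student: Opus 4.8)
The plan is to split $e^{h,N}$ along the two bracketed terms in its definition \eqref{eq:eMN} and to bound each contribution by an operator norm (supplied by Lemma \ref{lem:KminKMN}) times an appropriate norm of $g := \sigma_S\phi + f$. By the triangle inequality,
\[
\| e^{h,N} \|_\infty \ \le \ \big\| \big( \cK^N - \cK^{h,N} \big) g \big\|_\infty \ + \ \big\| \cK^{h,N}\big( I - \cPh \big) g \big\|_\infty \,.
\]
The first summand is the pure angular/spatial operator-difference error and should only see the $\C$-norm of $g$; the second is the midpoint-interpolation consistency error and should see the piecewise H\"older norm of $g$, which is where the $h^\eta$ and $\|\sigma\|_{\eta,pw}$ factors enter.

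For the first summand I would write $\| ( \cK^N - \cK^{h,N}) g \|_\infty \le \| \cK^N - \cK^{h,N} \|_{L_\infty \mapsto \C}\,\| g \|_\infty$ and insert the bound from Lemma \ref{lem:KminKMN}(i), which already produces the factor $c\,\rho\,\sgmin^{-2}(\sgmax^2 h\log N + \|\sigma\|_{\eta,pw}h^\eta)$. It then remains to bound $\|g\|_\infty \le \sgSmax\|\phi\|_\infty + \|f\|_\infty$; the first estimate of Corollary \ref{cor:phibnd} gives $\|\phi\|_\infty \le c\,\sgmin^{-1/2}\cRone\|f\|_\infty$, and a short calculation (using $\sgSmax \le \brov{\|\sigma_S\|_{\eta,pw}}$, $\sgmin^{-1/2}\cRone \le \cRtwo$ and $\cRtwo \ge 1$) collapses this to $\|g\|_\infty \le c\,\brov{\|\sigma_S\|_{\eta,pw}}\,\cRtwo\,\|f\|_{\eta,pw}$, which is exactly the shape required.

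For the second summand the key preliminary observation is that $g \in \C^\eta_{pw}$: indeed $f, \sigma_S \in \C^\eta_{pw}$ by Assumption \ref{ass:cross}, while $\phi \in \C^\eta$ by Corollary \ref{cor:phibnd} (legitimate since $\eta < 1$), and a product of $\C^\eta$ functions is again $\C^\eta$ up to an absolute constant. Hence \eqref{eq:perbnds} gives $\| (I - \cPh) g \|_\infty \le h^\eta \|g\|_{\eta,pw}$, so that $\| \cK^{h,N}(I - \cPh)g \|_\infty \le \| \cK^{h,N} \|_{L_\infty \mapsto \C}\, h^\eta \|g\|_{\eta,pw}$ with the operator norm controlled by Lemma \ref{lem:KminKMN}(ii). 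For $\|g\|_{\eta,pw}$ I would use $\|g\|_{\eta,pw} \le c\,\|\sigma_S\|_{\eta,pw}\|\phi\|_\eta + \|f\|_{\eta,pw}$ together with the second estimate of Corollary \ref{cor:phibnd}, $\|\phi\|_\eta \le c\,\cRtwo\|f\|_\infty$, which again yields $\|g\|_{\eta,pw} \le c\,\brov{\|\sigma_S\|_{\eta,pw}}\,\cRtwo\,\|f\|_{\eta,pw}$. Multiplying by $\| \cK^{h,N} \|_{L_\infty\mapsto\C}\,h^\eta \le c\,\rho\,\sgmin^{-1}(1 + \sgmax h\log N)h^\eta$ and simplifying with $h \le 1$ (so $h^{1+\eta}\le h$), $\sgmin \le \sgmax$ and $\sgmin^{-1} \le \sgmin^{-2}\|\sigma\|_{\eta,pw}$, this summand is bounded by the same expression as in the statement.

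Adding the two bounds and absorbing absolute constants into $c$ gives the claim. No new ideas are needed beyond the two consistency estimates of this subsection and the a priori bounds on $\phi$; the decomposition simply mirrors the two error sources ($\cK^N$ versus $\cK^{h,N}$, and exact right-hand side versus its midpoint interpolant) already built into $e^{h,N}$. The only genuine work, and the step most prone to error, is the \emph{constant bookkeeping}: turning the various products of powers of $\sgmin$, $\sgmax$, $\cRone$, $\cRtwo$, $\brov{\cdot}$ and $\brun{\cdot}$ coming out of Lemma \ref{lem:KminKMN} and Corollary \ref{cor:phibnd} into the single clean factor $c\,\rho\,\sgmin^{-2}\,\brov{\|\sigma_S\|_{\eta,pw}}\,\cRtwo\big(\sgmax^2 h\log N + h^\eta\|\sigma\|_{\eta,pw}\big)$.
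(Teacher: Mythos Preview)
Your proposal is correct and follows essentially the same route as the paper: split $e^{h,N}$ into the two bracketed pieces, apply Lemma~\ref{lem:KminKMN}(i) and (ii) together with \eqref{eq:perbnds}, and control $\|\sigma_S\phi+f\|_\infty$ and $\|\sigma_S\phi+f\|_{\eta,pw}$ via Corollary~\ref{cor:phibnd}. The only (cosmetic) difference is that the paper economises by immediately bounding $\|\sigma_S\phi+f\|_\infty \le \|\sigma_S\phi+f\|_{\eta,pw}$ and computing just the single estimate \eqref{eq:th470}, whereas you treat the two norms separately before arriving at the same factor $c\,\brov{\|\sigma_S\|_{\eta,pw}}\,\cRtwo\,\|f\|_{\eta,pw}$.
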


\begin{proof}
  It is easy to check that
  $\Vert \sigma_S \phi\Vert_{\eta, pw} \, \leq \, 3 \Vert \sigma_S \Vert_{\eta, pw} \Vert \phi
  \Vert_{\eta} $. Using this,   Corollary \ref{cor:phibnd}, and recalling that $\cRtwo \geq 1$,
  we obtain
  \begin{align} \Vert \sigma_S \phi + f \Vert_{\eta,pw} \ \leq \
  \left( 1 + 3 \Vert \sigma_S \Vert_{\eta, pw} \cRtwo \right) \Vert f \Vert_{\eta,pw} \ \leq
    3 \overline{\Vert \sigma_S\Vert_{\eta,pw}} \cRtwo \Vert f \Vert_{\eta,pw} \;. \label{eq:th470}
  \end{align}
  Hence, using \eqref{eq:eMN} \eqref{eq:perbnds}, and then the results of Lemma \ref{lem:KminKMN}, 
  \begin{align}
    \Vert e^{h,N} \Vert_\infty \ & \leq \ \Vert \cK^N - \cK^{h,N} \Vert_{L_\infty \mapsto \C }
                                 \Vert \sigma_S \phi + f \Vert_\infty + \Vert \cK^{h,N} \Vert_{L_\infty \mapsto \C } \, h^\eta  \Vert \sigma_S \phi + f \Vert_{\eta,pw}\label{eq:th47}\\
    & \leq \ \left(\Vert \cK^N - \cK^{h,N} \Vert_{L_\infty \mapsto \C }
      + h^\eta  \Vert \cK^{h,N} \Vert_{L_\infty \mapsto \C } \right)
      \Vert \sigma_S \phi + f \Vert_{\eta,pw}\nonumber \\
                                 & \leq c \rho \sgmin^{-2}
                                   \left( \sgmax^2 h \log N +
                                   h^\eta \Vert \sigma \Vert_{\eta, pw}  + h^\eta
                                   \sgmin (1+ \sgmax h \log N )
                                   \right)
                                   \Vert \sigma_S \phi + f \Vert_{\eta,pw}\;. \nonumber 
  \end{align}
  The result is obtained by combining \eqref{eq:th470} and
  \eqref{eq:th47} and simplification. 
\end{proof}

\subsection{Stability and Convergence}
\label{sec:stability}

So far  we have shown \igg{that} $\| e^N \|_\infty$ and $\| e^{h,N} \|_\infty$
approach zero as $h \log N \to 0$ and $N \to \infty$. See
\Cref{cor:eN} and \Cref{thm:eMN}, respectively. To prove a final bound on \eqref{eq:error_norm} we need to show ``stability'', i.e. that  $\left( I - \cK^{h,N} \cPh \sigma_S \right)^{-1}$ exists and is
bounded in the $\| \cdot \|_{\C \mapsto \C}$ norm, independently of $h$ and $N$. To do this, a useful trick is to write
\begin{equation} \label{eq:clever}
\left( I - \cK^{h,N} \cPh \sigma_S \right)^{-1} \ = \ I \ + \ \cK^{h,N} \left(I - \cPh \sigma_S \cK^{h,N} \right)^{-1} \cPh \sigma_S \ ,
\end{equation}
which holds when the inverses on each  side exist. 
We obtain stability by proving that the inverse exists on the right-hand side and then
estimating all the terms on the right-hand side.

\begin{theorem}
  \label{lem:Nlarge}
Under the assumptions of Lemma \ref{lem:KminKMN}, there is a constant $K>0$ such that,  if $h$ and $N^{-1}$ are sufficiently small so that $h \log N \leq 1$
and  
\begin{equation} \label{eq:hNlarge}
  (h^{\eta} + h \log N + N^{-1})^{-1}  \ \geq\  K
  \left( \frac{(\sigma_S)_{\max}}{(\sigma_S)_{\min}}   \right)
\left( \frac{\brov{\sgmax}}{\brun{\sgmin}}\right)^{3} \brov{\| \sigma
  \|_{\eta,pw}} \cRone =: \cRthr \ ,
\end{equation}
then $\left( I - \cK^{h,N} \cPh \sigma_S \right)^{-1}$ is bounded on  $\C$,
with the bound
\begin{equation} \label{eq:pstab}
  \| \left( I - \cK^{h,N} \cPh \sigma_S \right)^{-1} \|_{\C \mapsto \C} \ \leq \ c \quasunif
  \left(\frac{\brov{\sgmax}}{\sgmin} \right) \, \left( \frac{\sgSmax}{\sgSmin} \right) \brov{(\sigma_S)_{\max}} \cRone \ =: \ \cRfr\ ,
\end{equation}
where $\cRone$ is defined in Lemma \ref{lem:IminKinv}.
\end{theorem}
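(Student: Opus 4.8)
The plan is to build everything around the algebraic identity \eqref{eq:clever}: instead of inverting $I-\cK^{h,N}\cPh\sigma_S$ directly, I would invert the ``companion'' operator $I-\cPh\sigma_S\cK^{h,N}$ and then read off the statement for $I-\cK^{h,N}\cPh\sigma_S$. The one point requiring care is the choice of function space: multiplication by $\sigma_S$ does not map $\C$ into $\C$ (the cross-section may jump at the break points $c_j$), but it is a bounded isomorphism of $\C_{pw}$ onto itself, with $\|\sigma_S\,\cdot\,\|_{\infty}\le\sgSmax$ and $\|\sigma_S^{-1}\,\cdot\,\|_{\infty}\le\sgSmin^{-1}$. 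So I would perform the companion inversion on $\C_{pw}$, and only return to $\C$ at the very end via \eqref{eq:clever}.

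First I would produce a reference invertible operator. By Lemma~\ref{lem:IminKinv}, $(I-\cK\sigma_S)^{-1}$ is bounded on $\C_{pw}$ with norm $\le\cRone$; conjugating, $I-\sigma_S\cK=\sigma_S(I-\cK\sigma_S)\sigma_S^{-1}$ is invertible on $\C_{pw}$ with $\|(I-\sigma_S\cK)^{-1}\|_{\C_{pw}\mapsto\C_{pw}}\le(\sgSmax/\sgSmin)\cRone$. Next I would estimate the consistency defect
\[
\sigma_S\cK-\cPh\sigma_S\cK^{h,N}
\;=\;(I-\cPh)\sigma_S\cK\;+\;\cPh\sigma_S\bigl(\cK-\cK^N\bigr)\;+\;\cPh\sigma_S\bigl(\cK^N-\cK^{h,N}\bigr)
\]
in the $\C_{pw}\mapsto\C_{pw}$ norm. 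The last two summands are handled by Theorem~\ref{thm:eN} (an $\cO(N^{-1})$ bound) and Lemma~\ref{lem:KminKMN}(i) (an $\cO(h\log N+h^\eta)$ bound), after using $\|\cPh\|\le1$ and the factor $\sgSmax$ coming from multiplication by $\sigma_S$. For the first summand I would invoke the smoothing estimate Theorem~\ref{thm:mapbnds}(ii) with $\xi=\eta$, so that $\cK g\in\C^\eta$ with $\|\cK g\|_\eta\le c_\eta(\brov{\sgmax}/\brun{\sgmin})\|g\|_\infty$; then the product rule for piecewise H\"older norms together with \eqref{eq:perbnds} gives $\|(I-\cPh)\sigma_S\cK g\|_\infty\le c\,h^\eta\,\|\sigma_S\|_{\eta,pw}\,(\brov{\sgmax}/\brun{\sgmin})\,\|g\|_\infty$. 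Collecting the three pieces would yield
\[
\|\sigma_S\cK-\cPh\sigma_S\cK^{h,N}\|_{\C_{pw}\mapsto\C_{pw}}\ \le\ c\,\quasunif\,\mathcal{D}\,\bigl(N^{-1}+h\log N+h^\eta\bigr),
\]
where $\mathcal{D}$ is an explicit product of powers of $\brov{\sgmax}/\brun{\sgmin}$, $\brov{\|\sigma\|_{\eta,pw}}$ and $\brov{\sgSmax}$.

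Then I would combine these two estimates: the hypothesis \eqref{eq:hNlarge}, namely $N^{-1}+h\log N+h^\eta\le\cRthr^{-1}$, forces $\|(I-\sigma_S\cK)^{-1}\|\,\|\sigma_S\cK-\cPh\sigma_S\cK^{h,N}\|\le1/2$, provided $K$ is chosen large enough (depending only on $\quasunif$ and the absolute constant $c$): the exponents $3$ on $\brov{\sgmax}/\brun{\sgmin}$ and $1$ on $\brov{\|\sigma\|_{\eta,pw}}$ in $\cRthr$, together with its factor $\sgSmax/\sgSmin$, are exactly calibrated to absorb $(\sgSmax/\sgSmin)\cRone\cdot\mathcal{D}$. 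A standard Neumann-series perturbation argument then gives that $I-\cPh\sigma_S\cK^{h,N}$ is invertible on $\C_{pw}$ with $\|(I-\cPh\sigma_S\cK^{h,N})^{-1}\|_{\C_{pw}\mapsto\C_{pw}}\le2(\sgSmax/\sgSmin)\cRone$. It follows that the right-hand side of \eqref{eq:clever} is a well-defined bounded operator on $\C$, and a direct computation — using $(I-XY)(I+X(I-YX)^{-1}Y)=I$ (and its mirror image) with $X=\cK^{h,N}$, $Y=\cPh\sigma_S$ — shows it is a two-sided inverse of $I-\cK^{h,N}\cPh\sigma_S$, which is therefore invertible on $\C$. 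To conclude, I would bound the three factors on the right of \eqref{eq:clever}: $\|\cPh\sigma_S\|_{\C\mapsto\C_{pw}}\le\sgSmax$; the middle factor by $2(\sgSmax/\sgSmin)\cRone$; and $\|\cK^{h,N}\|_{L_\infty\mapsto\C}\le c\quasunif\,\sgmin^{-1}(1+\sgmax h\log N)\le c\quasunif\,\brov{\sgmax}/\sgmin$ by Lemma~\ref{lem:KminKMN}(ii) and $h\log N\le1$; multiplying these (and absorbing the ``$I$'' term, since $\cRone\ge1$) gives precisely the bound $\cRfr$ of \eqref{eq:pstab}.

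The main obstacle, I expect, is the consistency estimate of the second step: one must produce a bound for $\sigma_S\cK-\cPh\sigma_S\cK^{h,N}$ whose constant $\mathcal{D}$ has exactly the shape encoded in $\cRthr$, so that \eqref{eq:hNlarge} is the right resolution condition. The genuinely delicate ingredient there is the term $(I-\cPh)\sigma_S\cK$, which goes to zero only because $\cK$ is smoothing — this is the single place where the H\"older-regularity conclusion of Theorem~\ref{thm:mapbnds}(ii) is indispensable — and then one has to check carefully that the exponents appearing in $\cRthr$ are large enough to absorb all the coefficient-dependent factors. Everything else is routine manipulation, apart from the (purely formal) necessity of doing the companion inversion on $\C_{pw}$ and using \eqref{eq:clever} to transport the conclusion back to $\C$.
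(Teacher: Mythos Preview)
Your proposal is correct and follows essentially the same approach as the paper: invert the companion operator $I-\cPh\sigma_S\cK^{h,N}$ on $\C_{pw}$ by a Neumann-series perturbation of $I-\sigma_S\cK$ (whose inverse is obtained from Lemma~\ref{lem:IminKinv} via the conjugation $I-\sigma_S\cK=\sigma_S(I-\cK\sigma_S)\sigma_S^{-1}$), using exactly your three-term splitting of $\sigma_S\cK-\cPh\sigma_S\cK^{h,N}$ together with Theorem~\ref{thm:mapbnds}(ii), Theorem~\ref{thm:eN} and Lemma~\ref{lem:KminKMN}(i); then transport back to $\C$ via \eqref{eq:clever} and Lemma~\ref{lem:KminKMN}(ii). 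The paper phrases the perturbation step in terms of the auxiliary operator $\mathcal{A}^{h,N}:=I-(I-\sigma_S\cK)^{-1}(I-\cPh\sigma_S\cK^{h,N})$ and the Banach Lemma, but this is just your Neumann argument in different notation.
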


\begin{proof}
 Introduce the family of  operators  $\mathcal{A}^{h,N} := I - \left( I - \sigma_S \cK \right)^{-1} \left( I - \cPh \sigma_S \cK^{h,N} \right)$.  Suppose that for some  $h$ and $N$, we can ensure   
\begin{equation} \label{eq:Abound}
\| \mathcal{A}^{h,N} \|_{\C_{pw} \mapsto \C_{pw}} \ \leq \ 1/2 \ .
\end{equation} 
Then it follows from the Banach Lemma that  
\begin{equation} \label{eq:hN1}
\| \left( I - \cPh \sigma_S \cK^{h,N} \right)^{-1} \left( I - \sigma_S \cK \right) \|_{\C_{pw} \mapsto \C_{pw}} \ = \ \| ( I - \mathcal{A}^{h,N} )^{-1} \|_{\C_{pw} \mapsto \C_{pw}} \ \leq \ 2 \ .
\end{equation} 
Therefore,
\begin{align}
\| \left( I - \cPh \sigma_S  \cK^{h,N} \right)^{-1} \|_{\C_{pw} \mapsto \C_{pw}} \ &= \ \| \left( I - \cPh \sigma_S  \cK^{h,N} \right)^{-1} \left( I - \sigma_S \cK \right) \left( I - \sigma_S \cK \right)^{-1} \|_{\C_{pw} \mapsto \C_{pw}} \nonumber \\
& \leq \ 2 \ \| \left( I - \sigma_S \cK \right)^{-1} \|_{\C_{pw} \mapsto \C_{pw}} \ \leq \ 2  \ \frac{\sgSmax}{\sgSmin} \cRone \ , \label{eq:partinv} 
\end{align}
where we used the identity $\left( I - \sigma_S \cK \right)^{-1} = \sigma_S \left( I - \cK \sigma_S \right)^{-1} \sigma_S^{-1}$ and Lemma \ref{lem:IminKinv}. Thus, on the assumption that
\eqref{eq:Abound} holds, we have (on combining  \eqref{eq:clever} with Lemma \ref{lem:KminKMN} (ii),
and \eqref{eq:partinv},     and  recalling $h \log N \leq 1$), 
\begin{equation} \label{eq:cleveruse}
  \Vert \left( I - \cK^{h,N} \cPh \sigma_S \right)^{-1}\Vert_{\C \mapsto \C}  \ \leq
  \ 1  \ + \ \left[c \rho \sgmin^{-1} ( 1 + \sgmax)\right]    \, \left[2  \ \frac{\sgSmax}{\sgSmin} \cRone\right] \, \left[{(\sigma_S)}_{\max}\right]  \ ,
\end{equation}
which yields \eqref{eq:pstab}. 

It remains to find conditions which ensure  \eqref{eq:Abound}. To do this, we  write
\begin{align}
\| \mathcal{A}^{h,N} \|_{\C_{pw} \mapsto \C_{pw}} \ &= \ \| \left( I - \sigma_S \cK \right)^{-1} \left[ \left( I - \sigma_S \cK \right) - \left( I - \cPh \sigma_S \cK^{h,N} \right) \right] \|_{\C_{pw} \mapsto \C_{pw}} \nonumber \\
&\leq \ \| \left( I - \sigma_S \cK \right)^{-1} \|_{\C_{pw} \mapsto \C_{pw}} \| \cPh \sigma_S \cK^{h,N} - \sigma_S \cK  \|_{\C_{pw} \mapsto \C_{pw}} \nonumber \\
&\leq \ \frac{\sgSmax}{\sgSmin} \| \left( I - \cK \sigma_S  \right)^{-1} \|_{\C_{pw} \mapsto \C_{pw}} \| \sigma_S \cK - \cPh \sigma_S \cK^{h,N} \|_{\C_{pw} \mapsto \C_{pw}} \nonumber \\
&\leq \ \frac{\sgSmax}{\sgSmin} \cRone \ \| \sigma_S \cK - \cPh \sigma_S \cK^{h,N} \|_{\C_{pw} \mapsto \C_{pw}} \ , \label{eq:hN2}
\end{align}
where we again used Lemma \ref{lem:IminKinv}. To estimate the right hand side of \eqref{eq:hN2} we write
\begin{align} \label{eq:hN3}
\| \sigma_S \cK  - \cPh \sigma_S \cK^{h,N} \|_{\C_{pw} \mapsto \C_{pw}} \ &\leq \ \| \left( I - \cPh \right) \sigma_S \cK \|_{\C_{pw} \mapsto \C_{pw}} \nonumber\\
&\ \ + \ \| \cPh \sigma_S \|_{\C \mapsto \C_{pw}} \left( \| \cK - \cK^{N} \|_{\C_{pw} \mapsto \C} + \| \cK^N - \cK^{h,N} \|_{\C_{pw} \mapsto \C} \right) \  \nonumber \\
& =: T_1 + T_2 \ .\nonumber\end{align}
We can bound $T_2$ using  \Cref{thm:eN} and \Cref{lem:KminKMN} to  obtain 
\begin{align*}  T_2 \ & \leq\ c \rho \, (\sigma_S)_{\max}
  \left( \sgmin^{-1} N^{-1} + \sgmin^{-2} \sgmax^{2} h \log N + \sgmin^{-2} \Vert \sigma\Vert_{\eta , pw}  h^\eta
                      \right) \ \\
                     & \leq \ c \rho \left(\frac{\brov{\sgmax}}{\brun{\sgmin}}\right)^3 \brov{\Vert \sigma\Vert_{\eta, pw}}(h^\eta + h
                       \log N + N^{-1}) . 
\end{align*}
(The last inequality is an over-estimate, but we do this to reduce technicalities.)
On the other hand, using {\eqref{eq:perbnds}} and \Cref{thm:mapbnds}, we have
$$ 
T_1 \ \leq \ c \igg{h^{\eta}} \Vert \sigma_S \Vert_{\eta, pw}
\frac{\brov{\sgmax}}{\brun{\sgmin}} \ .
$$
Combining the estimates for $T_1$ and $T_2$ we obtain
\begin{align*}
\Vert \mathcal{A}^{h,N} \Vert_{\C_{pw} \mapsto \C_{pw}} \ \leq \ c \rho  
  \left( \frac{(\sigma_S)_{\max}}{(\sigma_S)_{\min}}   \right)
\left( \frac{\brov{\sgmax}}{\brun{\sgmin}}\right)^{3} \brov{\| \sigma \|_{\eta,pw}} \cRone (h^{\eta} + h \log N + N^{-1})
\end{align*}
and the result follows on recalling \eqref{eq:Abound}. 
\end{proof}

We now have all the ingredients to prove the main result of this section.

\begin{theorem} \label{thm:phiMN}
Let $\phi^{h,N}$ be as defined in \S \ref{sec:quadrule}.  Under the assumptions of Lemma \ref{lem:KminKMN},
provided that $h \log N \leq 1$ and that \eqref{eq:hNlarge} holds, we have
\begin{equation}
  \| \phi - \phi^{h,N} \|_\infty \ \leq \ c \cR \left( N^{-1} \ + \ h \log N \ + \ h^{\eta} \right) \| f \|_{\eta,pw}
  \ ,
     \label{eq:detest} 
\end{equation} 
where
\begin{align} \label{eq:defR}
  \cR =  \quasunif \cRfr \cRtwo \left(\frac{\brov{\sgmax}}{\brun{\sgmin}}\right)^2
  \brov{\| \sigma \|_{\eta,pw}} \ \brov{\| \sigma_S \|_{\eta,pw}} \ .
\end{align} 
\end{theorem}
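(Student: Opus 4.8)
The plan is to assemble the final estimate directly from the pieces already in place. Starting point is the error decomposition \eqref{eq:error_norm}, which bounds $\|\phi-\phi^{h,N}\|_\infty$ by the product of the stability factor $\|(I-\cK^{h,N}\cPh\sigma_S)^{-1}\|_{\C\mapsto\C}$ with the sum of the two consistency errors $\|e^N\|_\infty+\|e^{h,N}\|_\infty$. The standing hypotheses of the theorem, namely $h\log N\le 1$ together with \eqref{eq:hNlarge}, are exactly those under which \Cref{lem:Nlarge} applies, so the stability factor is bounded by $\cRfr$. Thus the whole proof reduces to bounding $\|e^N\|_\infty+\|e^{h,N}\|_\infty$ in the shape $c\,(\text{const})\,(N^{-1}+h\log N+h^\eta)\|f\|_{\eta,pw}$ with a constant that, after multiplication by $\cRfr$, collapses to $c\,\cR$.

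For the angular part I would invoke \Cref{cor:eN}, and then tidy the constant using $\|f\|_\infty\le\|f\|_{\eta,pw}$, $\sgmin^{-3/2}\le\brun{\sgmin}^{-2}$ and $\cRone\le\cRtwo$ (immediate from \eqref{eq:R2} since $\brov{\sgmax},\brov{\sgmax}/\brun{\sgmin}\ge 1$), to absorb $\|e^N\|_\infty$ into $c\,(\brov{\sgmax}/\brun{\sgmin})^2\,\cRtwo\,N^{-1}\|f\|_{\eta,pw}$. For the spatial part I would start from \Cref{thm:eMN} and use the crude bound $\sgmax^2\,h\log N+h^\eta\|\sigma\|_{\eta,pw}\le\brov{\sgmax}^2\,\brov{\|\sigma\|_{\eta,pw}}\,(h\log N+h^\eta)$ together with $\sgmin^{-2}\le\brun{\sgmin}^{-2}$, giving $\|e^{h,N}\|_\infty\le c\,\quasunif\,(\brov{\sgmax}/\brun{\sgmin})^2\,\brov{\|\sigma\|_{\eta,pw}}\,\brov{\|\sigma_S\|_{\eta,pw}}\,\cRtwo\,(h\log N+h^\eta)\|f\|_{\eta,pw}$. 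Adding the two contributions (and using $\quasunif\ge1$ for the first) yields
\begin{equation*}
\|e^N\|_\infty+\|e^{h,N}\|_\infty \ \le \ c\,\quasunif\,\cRtwo\left(\frac{\brov{\sgmax}}{\brun{\sgmin}}\right)^2\brov{\|\sigma\|_{\eta,pw}}\,\brov{\|\sigma_S\|_{\eta,pw}}\,\bigl(N^{-1}+h\log N+h^\eta\bigr)\,\|f\|_{\eta,pw}\,.
\end{equation*}

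Finally I would multiply this by the stability bound $\cRfr$ from \eqref{eq:pstab} and compare with the definition \eqref{eq:defR} of $\cR$: the product of $\cRfr$ with the constant above is precisely $c\,\cR$, which gives \eqref{eq:detest}. I do not expect a genuine analytic obstacle here — all the delicate work (the $h\log N$ angular singularity via \Cref{lem:Smudiff} and \Cref{lem:KminKMN}, the Gauss-quadrature error in \Cref{thm:eN}, and the stability of $I-\cK^{h,N}\cPh\sigma_S$ in \Cref{lem:Nlarge}) is already done. The only real care needed is the bookkeeping: tracking the accumulated powers of $\sgmax/\sgmin$ and of the piecewise-Hölder norms $\|\sigma\|_{\eta,pw}$, $\|\sigma_S\|_{\eta,pw}$ through the successive estimates and reducing them to the $\brov{\cdot}$, $\brun{\cdot}$ form of \eqref{eq:defR}, and in particular choosing at each step which of $\cRone,\cRtwo,\cRfr$ to keep explicit and which to absorb, so that one arrives at the stated $\cR$ rather than an unnecessarily larger constant.
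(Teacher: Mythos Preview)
Your proposal is correct and follows essentially the same approach as the paper: combine the error decomposition \eqref{eq:error_norm} with the stability bound from \Cref{lem:Nlarge} and the consistency estimates in \Cref{cor:eN} and \Cref{thm:eMN}, then use $\cRone\le\cRtwo$ and straightforward algebra with the $\brov{\cdot}$, $\brun{\cdot}$ notation to arrive at $\cR$. In fact your write-up spells out the bookkeeping (e.g.\ $\sgmin^{-3/2}\le\brun{\sgmin}^{-2}$ and $\sgmax^2 h\log N + h^\eta\|\sigma\|_{\eta,pw}\le\brov{\sgmax}^2\,\brov{\|\sigma\|_{\eta,pw}}(h\log N+h^\eta)$) more explicitly than the paper does.
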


\begin{proof}
We employ  \eqref{eq:error_norm},  combined with  \Cref{lem:Nlarge},  \Cref{cor:eN} and \Cref{thm:eMN} to obtain:
\begin{align*}
  \| \phi - \phi^{h,N} \|_\infty \ &\leq \ c \quasunif \cRfr \left[ \frac{\brov{\sgmax}}{\sgmin}
                                     \sgmin^{-1/2} \cRone N^{-1} \right. \\
                                   & \quad\quad\quad\quad\quad\quad\quad  + \left. \frac{\brov{\| \sigma_S \|_{\eta,pw}}}{\sgmin^2} \cRtwo \left( \sgmax^2 h \log N + \| \sigma \|_{\eta,pw} h^\eta \right) \right] \| f \|_{\eta,pw}
\end{align*}
and the result follows after some algebra and recalling  $\cRone \leq \cRtwo$.
\end{proof}

\begin{remark} In \cite{Pa:18} a numerical example is given,  providing  evidence that the estimate in  
  \Cref{thm:phiMN} is sharp in terms of its dependence on the spatial smoothness parameter $\eta$.
\end{remark}

\section{Application in Uncertainty Quantification}
\label{sec:numerics}

In this section we allow  the cross-sections to be  random fields. Our main result is
Theorem \ref{thm:phiMNrand}, which is a probabilistic counterpart of  Theorem \ref{thm:phiMN}.
The chief technical difficulty in obtaining this is  the coefficient-dependent stability condition
\eqref{eq:hNlarge}, which in  the
random case becomes   a path-dependent  condition,    and so
simply integrating  \eqref{eq:detest} in probability space is not possible.    Instead we prove Theorem \ref{thm:phiMNrand}   for an {\em a priori}  chosen 
deterministic stepsize $h$. This means that for some realisations the mesh might need to be further refined in order to obtain stability. However in our cost estimate (Lemma \ref{lem:expect_cost}) we show that the expected value of the cost is unaffected by these (relatively rare) events.

\subsection{Random Input Data and Probabilistic Error Estimates}
\label{sec:random}

To  describe the random case, we let  $\omega \in \Omega$  denote a random event from a  sample space $\Omega$, and let   $\mathbb{P}: \Omega \mapsto [0,1]$  denote   the  associated probability measure.
For any normed space ($X$, $\| \cdot \|_X$),  we define the Bochner space $L_p(\Omega;X) := \lbrace g: \Omega \mapsto X \ : \   \| g \|_{L_p(\Omega;X)}^p :=  \int_\Omega \| g \|_{X}^p \rd \mathbb{P}(\omega) < \infty \rbrace$. 

\begin{assumption}{(Random Input Data)} \label{ass:random}
  We assume $\sigma_S = \sigma_S(\omega,\cdot)$, $\sigma = \sigma(\omega,\cdot)$
  and $f = f(\omega,\cdot)$ are now random fields.  We  set $\sigma_A(\omega, \cdot) = \sigma(\omega, \cdot) - \sigma_S(\omega, \cdot)$ and assume that $\sigma_S$, $\sigma_A$ and hence $\sigma$ are all positive-valued.
  Also, for an   {\em a priori} specified partition \eqref{eq:part},
  and for some $\eta \in (0,1)$ we assume:
\begin{enumerate}
\item[(a)] $\sigma, \ \sigma_S \in L_p(\Omega;\C_{pw}^\eta) \ $,   for all $p \in [1,\infty)$;
  \item[(b)] 
    $\ ((\sigma_S)_{\min})^{-1} \,  , \, 
  ((\sigma_A)_{\min})^{-1}  \in L_p(\Omega)$, for all $p \in [1,\infty)$;
\item[(c)] $f \in L_{p_{*}}(\Omega;\C_{pw}^\eta)$, for some $p_{*} \in (1, \infty]$.
\end{enumerate}
\end{assumption}
We note that (a) (b), combined with the positivitiy of the cross-sections imply that, for all $p \in [1,\infty)$, 
\begin{align} \label{eq:estmax}  (\sigma_S)_{\max} (\omega) \leq \sgmax(\omega)  \ & =  \ \Vert \sigma(\omega, \cdot)  \Vert_\infty \ \leq\
  \Vert \sigma(\omega, \cdot) \Vert_{\eta, pw} \in L_p(\Omega) \, ,   \\
  \text{and} \quad \quad  & \sgmin^{-1}  \leq ((\sigma_S)_{\min})^{-1}
                            \in L^p(\Omega) \, .
\end{align} 

\begin{example}\label{ex:matern} 
A class of suitable random fields that can be shown to satisfy
\Cref{ass:random}(a), (b) is got by choosing $\sigma_S(\omega, \cdot) =
\exp(Z(\omega, \cdot))$ where $Z$ is a centered Gaussian random field
with Mat{\'e}rn covariance function, and choosing $\sigma_A(\omega,
\cdot) = \sigma_A(\cdot)$ (a deterministic spatial function). Then,
\Cref{ass:random} (a), (b) hold true  with $\eta < \nu$ and $\nu $ is
the Mat{\'e}rn smoothness parameter \cite{ChSc:13}. This example will be used in the
numerical experiments in Section \ref{sec:prob_num}.
\end{example} 

Note that with these assumptions, the quantity   $\cR$  appearing on
the right-hand side of Theorem \ref{thm:phiMN}  is now a scalar-valued random variable.
Our next result,  Lemma \ref{lem:RinLp},  estimates the norm of this quantity in probability space.

\begin{lemma} \label{lem:RinLp}
$\cR \in \ L_p(\Omega)\,, \quad  \text{for all} \quad p \in \ [1,\infty).$
\end{lemma}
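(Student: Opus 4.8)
The strategy is to recall the explicit definition of $\cR$ and observe that it is built up, via products, from a fixed number of elementary random quantities, each of which we already know lies in every $L_p(\Omega)$. By the generalised H\"older inequality, a finite product of random variables, each in $L_p(\Omega)$ for all finite $p$, is again in $L_p(\Omega)$ for all finite $p$. So the bulk of the work is just bookkeeping: unwinding the nested definitions of $\cR$, $\cRfr$, $\cRtwo$, $\cRone$ and identifying the atomic factors.

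\textbf{Key steps.} First I would expand $\cR$ using \eqref{eq:defR}, \eqref{eq:pstab}, \eqref{eq:R2}, \eqref{eq:R3}. Tracking through, $\cR$ is (up to the absolute constant $\rho$) a product of: powers of $\brov{\sgmax}$, $\brun{\sgmin}^{-1}$, $\sgmax$, $\sgmin^{-1}$; the ratios $\sgSmax/\sgSmin$; the factor $\brov{(\sigma_S)_{\max}}$; the factors $\brov{\Vert\sigma\Vert_{\eta,pw}}$ and $\brov{\Vert\sigma_S\Vert_{\eta,pw}}$; and the single problematic factor $\bigl(1 - \Vert\sigma_S/\sigma\Vert_\infty\bigr)^{-1}$ appearing inside $\cRone$. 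Second, I would check each atom is in $L_p(\Omega)$ for all $p<\infty$: by \eqref{eq:estmax} and the line following it, $\sgmax(\omega) \le \Vert\sigma(\omega,\cdot)\Vert_{\eta,pw}$ and $\sgmin(\omega)^{-1} \le ((\sigma_S)_{\min}(\omega))^{-1}$, both in all $L_p(\Omega)$ by \Cref{ass:random}(a),(b); the truncated versions $\brov{\cdot}$, $\brun{\cdot}$ only improve integrability since $1 \le \brov{a}\le \brov{b}$ when $a\le b$ and $\brun{a}^{-1}\le\brun{b}^{-1}\cdot$ is controlled similarly; $(\sigma_S)_{\max}\le\sgmax$ so its truncation is fine; and $\Vert\sigma_S\Vert_{\eta,pw} \in L_p(\Omega)$ directly from \Cref{ass:random}(a). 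Third, for the delicate factor, note $1 - \Vert\sigma_S/\sigma\Vert_\infty = 1 - \Vert (\sigma - \sigma_A)/\sigma\Vert_\infty = \inf_{x}\bigl(\sigma_A(x)/\sigma(x)\bigr) \ge (\sigma_A)_{\min}/\sgmax$, hence $\bigl(1 - \Vert\sigma_S/\sigma\Vert_\infty\bigr)^{-1} \le \sgmax \cdot ((\sigma_A)_{\min})^{-1}$, which is a product of two factors each in every $L_p(\Omega)$ — using \eqref{eq:estmax} and \Cref{ass:random}(b) for the second. Finally, I would assemble: $\cR$ is a finite product (with fixed exponents, all bounded by some absolute integer) of random variables each in $\bigcap_{1\le p<\infty} L_p(\Omega)$, so by the generalised H\"older inequality $\cR \in L_p(\Omega)$ for every $p \in [1,\infty)$.

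\textbf{Main obstacle.} The only non-routine point is the factor $\bigl(1 - \Vert\sigma_S/\sigma\Vert_\infty\bigr)^{-1}$ inside $\cRone$: it is \emph{not} obviously a polynomial expression in the basic data norms, and na\"ively it could blow up whenever the scattering ratio approaches $1$ anywhere in the domain (exactly the caveat flagged after \eqref{eq:result1}). The resolution — rewriting $1 - \Vert\sigma_S/\sigma\Vert_\infty$ as a pointwise infimum of $\sigma_A/\sigma$ and bounding it below by $(\sigma_A)_{\min}/\sgmax$ — is what makes \Cref{ass:random}(b)'s hypothesis on $((\sigma_A)_{\min})^{-1}$ precisely the right assumption, and everything else is then a mechanical application of H\"older. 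I would state the $(\sigma_A)_{\min}/\sgmax$ lower bound as the one displayed computation and leave the rest as "a finite product of $L_p$ factors''.
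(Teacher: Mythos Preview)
Your proposal is correct and follows essentially the same approach as the paper: decompose $\cR$ into a finite product of elementary factors, verify each lies in every $L_p(\Omega)$ via Assumption~\ref{ass:random}, isolate and bound the one non-polynomial factor $(1-\Vert\sigma_S/\sigma\Vert_\infty)^{-1}$, and conclude by the generalised H\"older inequality. Your bound $(1-\Vert\sigma_S/\sigma\Vert_\infty)^{-1} \le \sgmax\,((\sigma_A)_{\min})^{-1}$ is in fact slightly sharper than the paper's version $(\sgSmax + (\sigma_A)_{\max})/(\sigma_A)_{\min}$, but the idea and the role of Assumption~\ref{ass:random}(b) are identical.
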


\begin{proof}
  Using equations \eqref{eq:defR}, \eqref{eq:pstab} and \eqref{eq:R2},
  we see that there exists an integer  $r>0$ such that the random variable
  $\cR$ has a pathwise bound of the form  
\begin{equation} \label{eq:boundform}
\cR \; \leq\;  {c} \, \left[ \cRdash \right]^r
    \,\left[\left( 1 - \left\| \frac{\sigma_S}{\sigma} \right\|_\infty \right)^{-1}\right]^2 
  \ \ \text{with} \ \
    \cRdash := 
  \frac{\brov{\sgmax}}{\brun{\sgmin}} \ 
  \brov{\| \sigma \|_{\eta,pw}}   \  \brov{\| \sigma_S \|_{\eta,pw}} \;.
\end{equation}

Each of the terms in $\cRdash$ can be shown to be in
$L_p(\Omega)$ for all $p \in [1, \infty)$. We justify this only
  for $\brov{\sgmax}/\brun{\sgmin}$. The other terms are similar.   
Recall from Notation \ref{not:bar}:  If  
    $a \in L_p(\Omega)$  is any scalar random variable, then  $\brov{a}\in L_p(\Omega)$.
    Also if $a^{-1} \in L_p(\Omega)$, then also $(\brun{a})^{-1} = \brov{(a^{-1})}\in L_p(\Omega)$. 

    Assumption \ref{ass:random} ensures that
    $\sigma_{\max} \leq \Vert \sigma \Vert _{\eta, pw}
    \in L_p(\Omega)$, and thus $\brov{\sigma_{\max}} \in L_p(\Omega)$,
    for all $p \in [1, \infty)$.
   Similarly, Assumption \ref{ass:random}(b) ensures
   $\brun{\sigma_{\min}}^{-1}  \in L_p(\Omega)$, for all $p \in [1,
   \infty)$. Using the generalised H\"{o}lder inequality, it follows
   that $\brov{\sgmax}/\brun{\sgmin} \in L_p(\Omega)$, for all
   $p \in [1,\infty)$.  Proceeding similarly for the other terms,  it
   follows that $\cRdash \in L_p(\Omega)$ for all $p \in
   [1,\infty)$.   

  To finish the proof we show that $\left( 1 - \| \sigma_S / \sigma
    \|_\infty \right)^{-1} \in L_p(\Omega)$, for all $p \in [0,\infty)$,  from which the result
  follows. First note that 
$$
0 \ < \ \frac{\sigma_S}{\sigma} \ = \ \frac{\sigma_S}{\sigma_S + \sigma_A} \ = \ 1 - \frac{\sigma_A}{\sigma_S + \sigma_A} \ \leq \ 1 - \frac{(\sigma_A)_{\min}}{\sgSmax + (\sigma_A)_{\max}} \ < \ 1 \ . 
$$
Then it follows that
\begin{align} \label{rhs}
\left( 1 - \bigg \| \frac{\sigma_S}{\sigma} \bigg \|_\infty \right)^{-1} \leq \ \left( 1 - \left( 1 - \frac{(\sigma_A)_{\min}}{\sgSmax + (\sigma_A)_{\max}} \right) \right)^{-1} = \  \frac{\sgSmax + (\sigma_A)_{\max}}{(\sigma_A)_{\min}} . 
\end{align} 
Now, by the Assumption \ref{ass:random}, $\sigma_A = \sigma- \sigma_S \in L_p(\Omega, \C_{\eta, pw})$ and so
$(\sigma_A)_{\max} \in L_p(\Omega)$, and so the numerator in
\eqref{rhs} is $L_p(\Omega)$ for all $p \in [1, \infty)$.   Assumption
\ref{ass:random}(b) allows us to estimate the denominator, and it
follows that  the second term in \eqref{eq:boundform} is also in
$L_p(\Omega)$ for all $p \in [1, \infty)$. The result follows
again via the generalised H\"{o}lder inequality.  
\end{proof}

We now establish a probabilistic counterpart of \Cref{thm:phiMN}.
To simplify the presentation, we assume the following
relationship of the angular and spatial discretization parameters.
\begin{assumption}\label{ass:Nh}
  For each mesh diameter  $h$,  we assume the number of angular quadrature points is $2N$,  with  
\begin{equation} \label{eq:Nh_relation}
N \ = \ N(h) \ = \  \lceil {c_0} h^{-\eta } \rceil\;, 
\end{equation}  
for some constant $c_0>0$ independent of $h$ and $\omega$, where $\eta \in (0,1)$ is given in \Cref{ass:random}. We assume also that $c_0$ is chosen large enough so that $\log N(h) \geq 1$.
\end{assumption}
As a result of this assumption it is easily
seen that $h \log N(h) \leq c h \log h^{-1}$, and hence
\begin{equation}
  \label{eq:NandHrel}
h \ \le  \ N(h)^{-1} + h \log N(h)  + h^{\eta} \ \le \ c' \, h^{\eta},
\end{equation}
for some (different) constant {$c'>1$} independent of $h$ and $\omega$.

Now recall the mesh-dependent stability condition \eqref{eq:hNlarge} and note that
\Cref{ass:random} does not ensure that 
$\cRthr \in L_\infty(\Omega)$.
Hence,  for any fixed mesh size $h>0$,  it is impossible to guarantee that   \eqref{eq:hNlarge}
is satisfied uniformly for  all samples $\omega \in \Omega$. To deal with this problem we have to consider
sample-dependent mesh sizes.

\begin{definition} \label{def:hbar} Let $h>0$ be a deterministically
  chosen mesh diameter. For each $\omega \in \Omega$, let $\hmax$ be
  the largest possible value of $h$
such that the stability condition \eqref{eq:hNlarge} is satisfied with $\sigma_S=\sigma_S(\omega,x)$, $\sigma = \sigma(\omega,x)$ and $N = N(\hmax)$ and let  
\begin{equation} \label{eq:stab_h_def}
h_\omega \ := \ \min \lbrace h, \hmax \rbrace . 
\end{equation}
\end{definition} 
Solving the problem with mesh diameter $h_\omega$ is guaranteed to be
stable and to have an accuracy determined by $h$. The resulting
  numerical solution (defined as in \S \ref{sec:quadrule}) is denoted: 
\begin{align} \label{eq:Phi}
\phistab(\omega,x) := \phi^{h_\omega,N(h_\omega)}(\omega,x)\; . 
\end{align} 

In  \Cref{thm:phiMNrand}, we quantify the accuracy of $\Phi^h$ as an approximation to $\phi$ in terms of the deterministic mesh diameter  $h$.
The question then arises: how significant is the set ``$\Omega_{\text{bad}}$'', containing the  samples $\omega$ which
have to be computed using a finer mesh (i.e. where  $\hmax < h$). We shall see
in \Cref{lem:expect_cost} that this set is small and decreases as $h \rightarrow 0$ in such a way as to
ensure that the expected cost is not affected by the path-dependent stability criterion.       

\begin{theorem}
\label{thm:phiMNrand}
Under Assumptions \ref{ass:random} and \ref{ass:Nh}, $\phistab(\omega,\cdot)$ exists for all $\omega \in \Omega$ and
for any $1\le p < r \leq p_{*}$, there exists a positive constant $C_{p,r} > 0$ such that
\begin{equation}
\label{eqn:phiMNrand}
\| \phi - \phistab \|_{L_p(\Omega;L_\infty)} \ \leq \ 
C_{p,r} \; \| f \|_{L_r(\Omega;\C_{pw}^\eta)} \ h^{\eta} \, .
\end{equation}
\end{theorem}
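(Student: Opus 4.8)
The plan is to turn the deterministic estimate of \Cref{thm:phiMN} into a probabilistic one by applying it \emph{pathwise}, with the sample-dependent mesh diameter $h_\omega$ of \Cref{def:hbar}, and then integrating the resulting bound over $\Omega$ with H\"older's inequality, using \Cref{lem:RinLp} to control the random constant $\cR$. The key point that makes this work is that $h_\omega\le h$ is chosen precisely so that the pathwise stability condition \eqref{eq:hNlarge} holds, while the accuracy factor stays controlled by $h$ via \eqref{eq:NandHrel}.

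First I would fix $\omega\in\Omega$ and check that $\phistab(\omega,\cdot)$ exists and that all the hypotheses of \Cref{thm:phiMN} are met with $h=h_\omega$, $N=N(h_\omega)$. For fixed $\omega$ the quantity $\cRthr$ in \eqref{eq:hNlarge} is a finite positive number, and the quantity $N(h')^{-1}+h'\log N(h')+(h')^{\eta}$ tends to $0$ as $h'\to 0$; hence \eqref{eq:hNlarge} is satisfied for all sufficiently small mesh diameters, so $\hmax$ is well defined. Moreover, for $h'$ below an absolute threshold the map $h'\mapsto N(h')^{-1}+h'\log N(h')+(h')^{\eta}$ is non-decreasing, so \eqref{eq:hNlarge} continues to hold at every $h'\le\hmax$, in particular at $h'=h_\omega=\min\{h,\hmax\}$; and since the constant $K$ in \eqref{eq:hNlarge} may be taken $\ge 1$, the side condition $h_\omega\log N(h_\omega)\le 1$ follows automatically from \eqref{eq:hNlarge}. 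The quasi-uniformity of the meshes (\Cref{ass:QU}) is built into the construction, and $N(h_\omega)\ge 2$ by \Cref{ass:Nh} (which forces $\log N\ge 1$). Thus all hypotheses of \Cref{thm:phiMN} hold; in particular \Cref{lem:Nlarge} gives boundedness of the relevant inverse, so the discrete integral equation \eqref{eq:fullie} (equivalently the system \eqref{eq:fulldisc}--\eqref{eq:disc_bc} on the mesh of size $h_\omega$) has a unique solution and $\phistab(\omega,\cdot)$ exists.

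Next I would invoke \Cref{thm:phiMN} pathwise: for each $\omega$,
\[
\|\phi(\omega,\cdot)-\phistab(\omega,\cdot)\|_\infty \ \le\ c\,\cR\,\big(N(h_\omega)^{-1}+h_\omega\log N(h_\omega)+h_\omega^{\eta}\big)\,\|f(\omega,\cdot)\|_{\eta,pw}\,,
\]
where $\cR=\cR(\omega)$ is the random variable of \eqref{eq:defR}. Applying \eqref{eq:NandHrel} at $h=h_\omega$ bounds the bracketed factor by $c'h_\omega^{\eta}\le c'h^{\eta}$, since $h_\omega\le h$, so that
\[
\|\phi(\omega,\cdot)-\phistab(\omega,\cdot)\|_\infty \ \le\ c\,c'\,\cR\,\|f(\omega,\cdot)\|_{\eta,pw}\,h^{\eta}\,.
\]
Taking $L_p(\Omega)$ norms and applying H\"older's inequality on $\Omega$ with exponents $s,r$ obeying $\tfrac1p=\tfrac1s+\tfrac1r$ — so $s=\tfrac{pr}{r-p}\in[1,\infty)$ because $p<r$ — gives
\[
\|\phi-\phistab\|_{L_p(\Omega;L_\infty)} \ \le\ c\,c'\,h^{\eta}\,\|\cR\|_{L_s(\Omega)}\,\|f\|_{L_r(\Omega;\C_{pw}^\eta)}\,.
\]
By \Cref{lem:RinLp}, $\cR\in L_s(\Omega)$ for every finite $s$, so $\|\cR\|_{L_s(\Omega)}<\infty$; setting $C_{p,r}:=c\,c'\,\|\cR\|_{L_s(\Omega)}$ completes the argument. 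The step I expect to require the most care is the bookkeeping in the second paragraph, namely verifying that the path-dependent mesh $h_\omega$ genuinely inherits the deterministic stability condition \eqref{eq:hNlarge} (and the side condition $h_\omega\log N(h_\omega)\le1$): this rests on the monotonicity near $0$ of $h'\mapsto N(h')^{-1}+h'\log N(h')+(h')^{\eta}$ together with the choice of constants in \Cref{ass:Nh} and \eqref{eq:hNlarge}. Everything after that is a routine combination of \Cref{thm:phiMN}, \eqref{eq:NandHrel}, H\"older, and \Cref{lem:RinLp}.
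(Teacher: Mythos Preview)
Your proposal is correct and follows essentially the same route as the paper: apply \Cref{thm:phiMN} pathwise at $h_\omega$, use \eqref{eq:NandHrel} together with $h_\omega\le h$ to replace the accuracy factor by $c'h^\eta$, and then invoke H\"older's inequality on $\Omega$ with $\tfrac1p=\tfrac1s+\tfrac1r$ combined with \Cref{lem:RinLp}. Your existence discussion is in fact more careful than the paper's (which simply asserts that existence ``follows from the definition of $h_\omega$''), but the argument is otherwise identical.
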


\begin{proof}
  Existence of $\Phi^h$ follows from the definition of $h_\omega$.   Then, using  Theorem \ref{thm:phiMN}
 together with \eqref{eq:NandHrel}  and H{\"o}lder's inequality, we obtain 
\begin{align*}
\| \phi - \phistab \|_{L_p(\Omega;L_\infty)}^p &= \ \cE\left[ \| \phi - \phi^{h_\omega,N(h_\omega)} \|_\infty^p \right] \ 
\ \leq \ {c^p} \cE\left[  \vert \cR(\omega)  \vert^p  \,   \| f(\omega,\cdot) \|^p _{\eta,pw} h_\omega^{p \eta} \right]   \\
&\leq \  {c}^p  \; \big(\cE\left[ \vert \cR (\omega)\vert ^{q} \right]\big)^{p/q} \; \Big(\cE \left[ \| f(\omega,\cdot) \|_{\eta,pw}^{r} \right]\Big)^{p/r}\,  \,h^{p \eta}  \;,
\end{align*}
where 
${q}^{-1} + {r}^{-1}  =  p^{-1}$,  i.e., $q = pr/(r-p)$.
The result follows with $C_{p,r} = c \Vert \cR \Vert_{L_{q}(\Omega)} $.  
\end{proof}

\begin{remark}[Uniform Random Input Data]
  \em
   If we   strengthen  \Cref{ass:random} by requiring parts (a), (b) to hold for $p = \infty$
  (i.e. uniformly bounded random fields), then  $\cRthr \in L_\infty(\Omega)$.  In this case there exists a deterministic
  $h_{\max} $ so that,   when  $h \leq h_{\max}$, stability condition
  \eqref{eq:hNlarge} holds uniformly over all samples, and we can choose $p=r=p_*$ in \eqref{eqn:phiMNrand}.  If, in addition,
  $p_{*} = \infty$ in \Cref{ass:random}(c), then $\| \phi - \phistab \|_{L_\infty(\Omega;L_\infty)} = \mathcal{O}(h^{\eta})$.
  However, such a strengthening would rule out important cases: \Cref{ass:random}(a), (b)   does not hold with $p=\infty$  for the lognormal fields  considered in \S \ref{sec:prob_num}. 
\end{remark}

For the general case of sample-dependent discretisations
it is important to discuss the \textit{expected computational cost} per sample.
The following lemma shows that if  the cost for computing each  sample is of order $h_\omega^{-\gamma}$ with a
sample dependent constant of proportionality which is sufficiently well-behaved and $h_\omega$ is given in
\eqref{eq:stab_h_def} then the expected cost per sample is $\mathcal{O}(h^{-\gamma})$. Essentially this shows that
the samples which need over-refinement to achieve stability
(i.e. $h_\omega^{\mathrm{max}} \ll h$ in \eqref{eq:stab_h_def}) are small
in measure.  In \Cref{ex:solvers} below, we give examples of solvers
for \eqref{eq:fulldisc}-\eqref{eq:phi_fulldisc} to which the  lemma can be  applied.  

 \begin{lemma} \label{lem:expect_cost}
Let Assumptions \ref{ass:random} and \ref{ass:Nh} hold and let $\Phi^h(\omega, \cdot)$  be  defined in \eqref{eq:Phi}.
   Assume   also that the cost
   $\mathcal{C}(\Phi^h)$ to compute one sample of  $\Phi^h$  is bounded by 
\begin{equation} \label{eq:rand_cost_bnd}
\mathcal{C}(\Phi^h) \ \leq \ C'(\omega) h_\omega^{-\gamma} \ ,
\end{equation}
with $C' \in L_p(\Omega)$,  for some $p>1$. Then \
$
\cE[ \mathcal{C}(\phistab) ] \leq   c  h^{-\gamma} .  
$
If $C' \in L_p(\Omega)$,  for some $p>2$, then we also have \
$
\V[ \mathcal{C}(\phistab) ] \leq   c  h^{-2\gamma} 
$.

\end{lemma}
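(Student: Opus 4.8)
The plan is to show that the sample-dependent mesh diameter $h_\omega$ is bounded below by a positive random variable with finite moments of all orders, and then to feed this into the assumed cost bound \eqref{eq:rand_cost_bnd} and finish with H\"older's inequality.

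\emph{Step 1: integrability of $\cRthr$.} First I would observe that the quantity $\cRthr$ appearing in the stability condition \eqref{eq:hNlarge} lies in $L_p(\Omega)$ for every $p\in[1,\infty)$. This follows by exactly the argument used for \Cref{lem:RinLp}: $\cRthr$ is a finite product of the factors $(\sigma_S)_{\max}/(\sigma_S)_{\min}$, a power of $\brov{\sgmax}/\brun{\sgmin}$, $\brov{\|\sigma\|_{\eta,pw}}$ and $\cRone$, and Assumption~\ref{ass:random} guarantees that each of these has finite moments of all orders, so the generalised H\"older inequality gives $\cRthr\in L_p(\Omega)$ for all finite $p$, and hence $\cRthr^{\gamma/\eta}\in L_q(\Omega)$ for every $q\in[1,\infty)$.

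\emph{Step 2: a pathwise lower bound on $h_\omega$.} By \eqref{eq:NandHrel}, for the choice $N=N(\tilde h)$ of Assumption~\ref{ass:Nh} one has $\tilde h^{\eta}+\tilde h\log N(\tilde h)+N(\tilde h)^{-1}\le c'\tilde h^{\eta}$ with $c'>1$ independent of $\tilde h$ and $\omega$. Hence, whenever $\tilde h\le (c'\cRthr)^{-1/\eta}$, the bracketed quantity $\tilde h^{\eta}+\tilde h\log N(\tilde h)+N(\tilde h)^{-1}$ is at most $\cRthr^{-1}$, i.e. the stability condition \eqref{eq:hNlarge} holds. Since by Definition~\ref{def:hbar} $\hmax$ is the largest $\tilde h$ for which \eqref{eq:hNlarge} holds, we conclude $\hmax\ge (c'\cRthr)^{-1/\eta}=: c''\,\cRthr^{-1/\eta}$. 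Therefore $h_\omega=\min\{h,\hmax\}\ge\min\{h,\,c''\cRthr^{-1/\eta}\}$, which yields the pathwise bound
\[
h_\omega^{-\gamma}\ \le\ \max\big\{h^{-\gamma},\,(c'')^{-\gamma}\cRthr^{\gamma/\eta}\big\}\ \le\ h^{-\gamma}\ +\ c\,\cRthr^{\gamma/\eta}\ .
\]

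\emph{Step 3: expectation and variance.} Inserting this into \eqref{eq:rand_cost_bnd} gives $\mathcal{C}(\phistab)\le C'(\omega)\big(h^{-\gamma}+c\,\cRthr^{\gamma/\eta}\big)$ pathwise. Taking expectations and applying H\"older's inequality with exponents $p$ and $p/(p-1)$ to the second term,
\[
\cE[\mathcal{C}(\phistab)]\ \le\ h^{-\gamma}\,\cE[C']\ +\ c\,\|C'\|_{L_p(\Omega)}\,\big\|\cRthr^{\gamma/\eta}\big\|_{L_{p/(p-1)}(\Omega)}\ \le\ c\,h^{-\gamma}\ ,
\]
where we used $\cE[C']\le\|C'\|_{L_p(\Omega)}<\infty$ (as $p>1$), Step~1, and $h^{-\gamma}\ge 1$ for $h\le 1$ to absorb the additive constant. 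For the variance, I would bound $\V[\mathcal{C}(\phistab)]\le\cE[\mathcal{C}(\phistab)^2]\le 2\,h^{-2\gamma}\cE[C'^2]+2c^2\,\cE[C'^2\cRthr^{2\gamma/\eta}]$; when $p>2$ one has $C'^2\in L_{p/2}(\Omega)$ with $p/2>1$, so H\"older with exponents $p/2$ and $p/(p-2)$ (together with Step~1) bounds the last expectation by a constant, giving $\V[\mathcal{C}(\phistab)]\le c\,h^{-2\gamma}$.

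The main obstacle is Step~2: reading off the correct pathwise lower bound for $\hmax$ from Definition~\ref{def:hbar} and the relation \eqref{eq:NandHrel}. Care is needed to check that the admissible set of $\tilde h$ in Definition~\ref{def:hbar} is nonempty (it contains a neighbourhood of $0$) and bounded above (since $\tilde h^{\eta}$ alone eventually violates \eqref{eq:hNlarge}), so that $\hmax>0$ is well defined, and that \eqref{eq:hNlarge} really does hold for \emph{all} $\tilde h$ below the threshold (which uses the monotonicity of $c'\tilde h^{\eta}$). As a byproduct, Chebyshev's inequality applied to $\hmax\ge c''\cRthr^{-1/\eta}$ shows that the over-refinement set $\{\omega:\hmax<h\}$ is contained in $\{\cRthr>c\,h^{-\eta}\}$ and hence has probability $O(h^{m\eta})$ for every $m$, making precise the assertion in the text that this set is negligible; this is not, however, needed for the two estimates above.
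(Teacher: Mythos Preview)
Your proof is correct and follows essentially the same route as the paper's: bound $h_\omega^{-\gamma}$ pathwise by $h^{-\gamma}$ plus a power of $\cRthr$, then apply H\"older's inequality using $C'\in L_p(\Omega)$ and $\cRthr\in L_q(\Omega)$ for all finite $q$. The only cosmetic difference is in Step~2, where you derive the lower bound $\hmax\ge (c'\cRthr)^{-1/\eta}$ directly by showing that this threshold satisfies \eqref{eq:hNlarge}, whereas the paper obtains the equivalent upper bound on $(\hmax)^{-\eta}$ by arguing that $2\hmax$ must violate \eqref{eq:hNlarge}.
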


\begin{proof} For any sample $\omega$,  $\hmax$ is defined as
  the largest stepsize which satisfies \eqref{eq:hNlarge}. Hence 
  inequality \eqref{eq:hNlarge} must fail if we replace $\hmax$ by $2 \hmax$, i.e.
  $$
  \Big((2 \hmax)^{\eta} + 2 \hmax  \log N(2 \hmax)  +
  N(2 \hmax)^{-1}
\Big)^{-1}  \ <  \cRthr(\omega) . 
$$
Using \eqref{eq:NandHrel}, this ensures that $\left( \hmax
\right)^{-{\eta}} < 2^\eta c' \cRthr$.
Then, using \eqref{eq:rand_cost_bnd} and \eqref{eq:stab_h_def},  we get
\begin{align}
\mathcal{C}(\phistab) \ \le \ C'(\omega) h_\omega^{-\gamma} 
&= \ C'(\omega) \max \lbrace h^{-\gamma} , \; \left( \hmax \right)^{-\gamma} \rbrace \nonumber\\
&\le \ C'(\omega)  \left(h^{-\gamma} +  2^\gamma \left( c' \mathcal{R}_3(\sigma(\omega,\cdot),\sigma_S(\omega,\cdot))\right)^{\gamma/\eta} \right)\ .
\label{eq:cost_est1}
\end{align}
Now, taking the expectation of \eqref{eq:cost_est1} and applying \igg{H\"older's} inequality
\begin{align*}
\cE \left[ \mathcal{C}(\phistab) \right] \ &\leq \ \cE\left[
                                             C'(\omega)  \right]
                                             h^{-\gamma} +  c \, \cE \left[ C'(\omega) \mathcal{R}_3(\sigma(\omega,\cdot),\sigma_S(\omega,\cdot))^{\gamma/\eta}  \right] \\
&\leq \ \cE\left[ C'(\omega)  \right] h^{-\gamma} +  c\Big(\cE \left[ C'(\omega)^p  \right]\Big)^{1/p} \Big(\cE \left[ \mathcal{R}_3(\sigma(\omega,\cdot),\sigma_S(\omega,\cdot))^{\gamma q/\eta}  \right]\Big)^{1/q} \ ,
\end{align*}
where $1 \leq q < \infty$ is such that $p^{-1} + q^{-1} = 1$ and
$c = 2^\gamma (c')^{\gamma/\eta}$. By a similar argument to that
in \Cref{lem:RinLp}, $\cRthr^{\gamma/\eta} \in L_q(\Omega)$ and so
the result follows, since $h\leq1$.

The variance result follows in the same way
  upon noting that $\V[ \mathcal{C}(\phistab) ] \le \cE \left[ \mathcal{C}(\phistab)^2 \right]$.
\end{proof}
\begin{remark} \label{rem:Omegabad}
  The second term on the right-hand side of \eqref{eq:cost_est1} is the contribution to the expected
  cost from samples for which the deterministically chosen mesh size $h$ is not stable, and for which further mesh  refinement is necessary (at least in theory). Since 
  $\hmax$ is chosen to be the {\em largest}  allowable mesh diameter which is stable,
  we can bound it both above and below in terms of     the $L_p$ integrable function $\cRthr$. Hence this second term remains bounded as $h \rightarrow 0$, giving the favourable complexity estimate proved in the lemma.   
  \end{remark} 

\begin{example}
\label{ex:solvers}
  In \cite{GrPaSc:17}, two methods for computing the solution to \eqref{eq:fulldisc}-\eqref{eq:phi_fulldisc} are presented. The corresponding system matrix is of dimension
  $\mathcal{O}(N(h_\omega) h_{\omega}^{-1} ) = \mathcal{O}(
  h_{\omega}^{-1 - \eta} )$.  

The first method is a direct solver
  where first $\psi$ is eliminated from the coupled system \eqref{eq:fulldisc}-\eqref{eq:phi_fulldisc} and
  then LU factorisation is applied to the resulting Schur complement system. The cost for this method is of order $h_\omega^{-2}\left(N(h_\omega)+h_\omega^{-1}\right)$ with a constant independent of $\omega$ \cite{GrPaSc:17}. Using \eqref{eq:Nh_relation} this implies that \eqref{eq:rand_cost_bnd} holds with $C'$ independent of $\omega$ and $\gamma = 3$.
  
The second method is a type of Richardson iteration known as
\textit{source iteration}. In that case, the cost is of order
$h_\omega^{-1}N(h_\omega)$ with a constant proportional to
  $-\big[\log \big( \|\sigma_S(\omega,\cdot)/\sigma(\omega,\cdot)\|_\infty \big)\big]^{-1}$ \cite{Bl:16,BlGrScSp:18,GrPaSc:17}. Using again \eqref{eq:Nh_relation}, this implies that \eqref{eq:rand_cost_bnd} holds with $\gamma= 1+\eta $.
\end{example}

\begin{corollary} \label{cor:solver_cost}
Suppose the assumptions of \Cref{lem:expect_cost} hold and system \eqref{eq:fulldisc}-\eqref{eq:phi_fulldisc} is solved with either of the Methods 1 or 2 in \Cref{ex:solvers}. Then, condition \eqref{eq:rand_cost_bnd} holds with $C' \in L_\infty(\Omega)$ (Method 1) and $C' \in L_p(\Omega)$, for all $1 \leq p < \infty$ (Method 2). Hence,
\begin{align*}
\cE[ \mathcal{C}(\phistab) ] \, &= \, \mathcal{O}\left(h^{-3} \right) \quad
                                  & \text{and} \quad \V[
                                    \mathcal{C}(\phistab) ] \, &= \,
                                    \mathcal{O}\left(h^{-6} \right)
                                    \quad &&\text{for Method 1, and}\\
\cE[ \mathcal{C}(\phistab) ] \, &= \, \mathcal{O}\left(h^{-(1+\eta)} \right) \quad
                                  & \text{and} \quad \V[
                                    \mathcal{C}(\phistab) ] \, &= \,
                                    \mathcal{O}\left(h^{-2(1+\eta)}
                                    \right) \quad &&\text{for Method
                                                    2,}
\end{align*}
respectively, and the hidden constants are independent of $h$. 
\end{corollary}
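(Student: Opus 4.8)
The plan is to handle the two solvers of \Cref{ex:solvers} separately: for each, verify that the per-sample cost bound \eqref{eq:rand_cost_bnd} holds with the stated exponent $\gamma$ and with a coefficient $C'$ of the required integrability, and then quote \Cref{lem:expect_cost} directly. \textbf{Method 1 (direct solver).} First I would recall from \Cref{ex:solvers} that the cost is $\mathcal{O}\big(h_\omega^{-2}(N(h_\omega)+h_\omega^{-1})\big)$ with a constant independent of $\omega$. Since $\eta<1$, the relation $N(h)=\lceil c_0 h^{-\eta}\rceil$ in \eqref{eq:Nh_relation} gives $N(h_\omega)+h_\omega^{-1}=\mathcal{O}(h_\omega^{-1})$, so the cost is $\mathcal{O}(h_\omega^{-3})$; thus \eqref{eq:rand_cost_bnd} holds with $\gamma=3$ and a deterministic $C'$, in particular $C'\in L_\infty(\Omega)\subset L_p(\Omega)$ for every $p$. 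Applying \Cref{lem:expect_cost} with any $p>2$ then gives $\cE[\mathcal{C}(\phistab)]=\mathcal{O}(h^{-3})$ and $\V[\mathcal{C}(\phistab)]=\mathcal{O}(h^{-6})$.

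\textbf{Method 2 (source iteration).} Here \Cref{ex:solvers} gives a cost $\mathcal{O}\big(h_\omega^{-1}N(h_\omega)\big)$ with constant proportional to $C'(\omega):=-\big[\log\|\sigma_S(\omega,\cdot)/\sigma(\omega,\cdot)\|_\infty\big]^{-1}$; using $N(h_\omega)=\mathcal{O}(h_\omega^{-\eta})$ from \eqref{eq:Nh_relation} this becomes $\mathcal{O}\big(C'(\omega)\,h_\omega^{-(1+\eta)}\big)$, so \eqref{eq:rand_cost_bnd} holds with $\gamma=1+\eta$. The only step that needs real work is showing $C'\in L_p(\Omega)$ for all $p<\infty$, which I would do by mimicking the argument in \Cref{lem:RinLp}: pathwise $0<\|\sigma_S/\sigma\|_\infty\le 1-(\sigma_A)_{\min}/(\sgSmax+(\sigma_A)_{\max})<1$, and since $-\log(1-t)\ge t$ for $t\in(0,1)$ this forces
\[
C'(\omega)\ \le\ \frac{\sgSmax(\omega)+(\sigma_A)_{\max}(\omega)}{(\sigma_A)_{\min}(\omega)}\ ,
\]
whose right-hand side is in $L_p(\Omega)$ for every finite $p$ by \Cref{ass:random}(a),(b) (numerator bounded by $\|\sigma\|_{\eta,pw}$ and $\|\sigma_S\|_{\eta,pw}$, denominator by $((\sigma_A)_{\min})^{-1}$) together with the generalised H\"older inequality. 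With $C'\in L_p(\Omega)$ for all $p<\infty$, in particular for some $p>2$, \Cref{lem:expect_cost} delivers $\cE[\mathcal{C}(\phistab)]=\mathcal{O}(h^{-(1+\eta)})$ and $\V[\mathcal{C}(\phistab)]=\mathcal{O}(h^{-2(1+\eta)})$, with constants independent of $h$.

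I expect the sole genuine obstacle to be the moment estimate for $C'$ in Method 2 --- i.e.\ controlling the blow-up of $-[\log\|\sigma_S/\sigma\|_\infty]^{-1}$ in the regime where $\sigma_S$ and $\sigma$ nearly coincide; once that integrability is secured via the $\sigma_A$-bound above, everything else is a mechanical substitution of \eqref{eq:Nh_relation} into the cost formulas of \Cref{ex:solvers} followed by a direct appeal to \Cref{lem:expect_cost}.
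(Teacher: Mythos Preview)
Your proposal is correct and follows essentially the same approach as the paper: for Method~1 you note (as does \Cref{ex:solvers}) that $C'$ is deterministic, and for Method~2 you bound $-[\log\|\sigma_S/\sigma\|_\infty]^{-1}$ by $(\sgSmax+(\sigma_A)_{\max})/(\sigma_A)_{\min}$ via the elementary inequality $-\log(1-t)\ge t$, then invoke \Cref{ass:random} and \Cref{lem:expect_cost}. The paper does the same, phrasing the log inequality as $-\log(y)>1-y$ and then quoting \eqref{rhs}; your version merely folds those two steps together.
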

\begin{proof}
For the direct solver, $C'$ is independent of $\omega$, and hence trivially $C' \in L_{\infty}(\Omega)$.

In the case of the iterative solver,
$\|\sigma_S(\omega,\cdot)/\sigma(\omega,\cdot)\|_\infty \in (0,1)$, for almost all $\omega \in \Omega$, and since
$-\log(y)   > 1-y$, for all $y \in (0,1)$,  we have
\[
C'(\omega) \ \le \ - {c} \,\Big[ \log \left( \left\|\frac{\sigma_S(\omega,\cdot)}{\sigma(\omega,\cdot)}\right\|_\infty \right)\Big]^{-1} \
<  \ {c} \left( 1 - \left\|\frac{\sigma_S(\omega,\cdot)}{\sigma(\omega,\cdot)}\right\|_\infty \right)^{-1}. 
\]
As we have seen in \eqref{rhs},   this implies that $C' \in
L_p(\Omega)$, for all $p \in (0,\infty]$. The bounds on the expected
values and on the variances follow from \Cref{lem:expect_cost}. 
\end{proof}

For solving the linear systems arising from the transport equation in this paper we use standard source iteration, and this corollary  indicates the efficiency of this method for
the problems considered here. More efficient and flexible solvers such as those in \cite{KaRa:14} could be used in more general situations.

\subsection{Multilevel Monte Carlo Methods} \label{sec:MLMC_theory}

In this subsection we will \igg{consider} the application of MLMC techniques to compute functionals of the scalar flux $\phi$.
First we recall some general results on  MLMC. 
Suppose $Q = Q(\omega) $ is a random variable, whose expected value we wish to compute, and 
suppose $Q_h(\omega)$ is an approximation of
$Q(\omega)$ which becomes more accurate as the spatial mesh size $h \rightarrow 0$.
With   $\Qhat$ denoting an unbiased estimator for  $\cE[Q_h]$ (i.e. $\cE[\Qhat] = \cE[Q_h]$), 
the mean square error $e(\Qstabhat)$ in approximating $\cE[Q]$ with
$\Qhat$ is  given by
\begin{equation}
  e(\Qstabhat)^2 \
   := \ \cE\left[ (\Qstabhat - \cE[Q])^2 \right] \
= \ \left( \cE\left[ Q - \Qstab \right] \right)^2 \ + \ \V[\Qstabhat] \ ,
\label{eq:MSE}
\end{equation}
the first term being the square of the {\em bias} due to  discretization, and the second being  the {\em sampling error}
$\V[\Qstabhat] = \cE[ (\Qstabhat - \cE[\Qstabhat])^2 ]$.

In order to compare various estimators $\Qhat$, we define,  for any
$\epsilon \in (0,1)$,  the 
$\epsilon$-cost $\cC_{\epsilon}$ to be the number of (floating point) operations to achieve $e(\Qstabhat)^2 \le \epsilon^2$, a  sufficient condition for this being that  each of the terms on the right-hand side of 
\eqref{eq:MSE} should be bounded by  $\epsilon^2/2$. 

The standard Monte Carlo (MC) estimator for $\cE[Q]$ with $N_\MC$ samples is 
\begin{equation}
\Qstabhat^{MC} \ := \ \frac{1}{N_\MC} \sum_{n=1}^{N_\MC} \Qstab (\omega^{(n)}) \ .
\label{eq:estimatorMC}
\end{equation}
The sampling error is $\V[\Qstabhat^{MC}] = \V[\Qstab]/N_\MC$ and since $\V[\Qstab]$ is bounded as $h \to 0$, bounding this by  $\epsilon^2 / 2$ requires   $N_\MC \sim \epsilon^{-2}$.

To go further with the analysis one must make assumptions about the accuracy of the approximation $Q_h \approx Q$ and the
cost $\cC(\Qstab(\omega))$ of computing a single sample of $\Qstab$. Following \cite{Cl:11} we  
assume that there exist two constants $\alpha, \gamma > 0$ such that
\begin{align} \label{eq:bias_assmc}
\Big|\cE\left[ Q - \Qstab \right]\Big| \ & = \ \cO \left( h^\alpha \right) \ ,\\
\cE\left[\cC(\Qstab)\right] \ &= \ \cO \left( h^{-\gamma} \right) \ .\label{eq:cost_assmc}
\end{align}
Then,  to achieve an error of order $\epsilon$  in the ``bias'' \eqref{eq:bias_assmc},   we need to take $h
\sim \epsilon^{1/\alpha}$,  leading,  via \eqref{eq:cost_assmc}, to a mean  cost per sample of
order $\epsilon ^{-\gamma/\alpha}$. It follows immediately that there
exists a constant $c_\mu > 0$ independent of $\epsilon$ such that
the mean $\epsilon$-cost of the standard Monte Carlo estimator satisfies
\begin{equation} \label{eq:epscost_MC}
\cE\left[\cC_{\epsilon}(\Qstabhat^{MC})\right] \ = \ \cE\left[ \sum_{n=1}^{N_\MC}\cC(\Qstab(\omega^{(n)}))\right] \ = \ N_\MC \, \cE\left[\cC(\Qstab)\right] \ \le \ c_\mu \, \epsilon^{-2 - \frac{\gamma}{\alpha}}  \ .
\end{equation}
In fact,  this result can  be strengthened, as we now show.
\begin{theorem}\label{thm:probcost}
Let $\delta \in (0,1)$. In addition to \eqref{eq:bias_assmc} and
\eqref{eq:cost_assmc}, we assume that
\begin{equation} \label{eq:bnd_var_cost}
\V\left[\cC(\Qstab)\right] \ = \ \cO \left( h^{-2\gamma} \right)\, .
\end{equation}
Then there exist constants $c_\mu$ and $c_\sigma$ such that, 
for any $\epsilon \le \epsilon_{\max} \le 1$, 
$$
\mathbb{P} \left[ \cC_{\epsilon}(\Qstabhat^{MC}) < 
  (c_{\mu} + c_\sigma \epsilon_{\max} \delta^{-1}) \; \epsilon^{-2 -
  \frac{\gamma}{\alpha}} \right] \; > \; 1 - \delta^2.
$$
\end{theorem}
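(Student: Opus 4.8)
The plan is to realise the $\epsilon$-cost as a sum of independent, identically distributed random variables and to control its fluctuation about its mean by Chebyshev's inequality. First I would fix the free parameters exactly as in the derivation of \eqref{eq:epscost_MC}: a sufficient condition for $e(\Qstabhat^{MC})^2 \le \epsilon^2$ is that the squared bias and the sampling error in \eqref{eq:MSE} are each at most $\epsilon^2/2$, which by \eqref{eq:bias_assmc} and the boundedness of $\V[\Qstab]$ is achieved by a deterministic mesh size $h=h(\epsilon)$ with $h \sim \epsilon^{1/\alpha}$ and a deterministic sample number $N_\MC = N_\MC(\epsilon)$ with $N_\MC \sim \epsilon^{-2}$; thus there are constants independent of $\epsilon$ with $N_\MC \le c_1\,\epsilon^{-2}$ and $h^{-\gamma} \le c_2\,\epsilon^{-\gamma/\alpha}$. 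Writing $X_n := \cC(\phistab(\omega^{(n)}))$, we then have $\cC_{\epsilon}(\Qstabhat^{MC}) = S$, where $S := \sum_{n=1}^{N_\MC} X_n$ is a sum of $N_\MC$ i.i.d.\ copies of $\cC(\phistab)$.

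Next I would compute the first two moments of $S$ using independence together with the hypotheses \eqref{eq:cost_assmc} and \eqref{eq:bnd_var_cost}: $\cE[S] = N_\MC\,\cE[\cC(\phistab)] \le c_3\,N_\MC\,h^{-\gamma}$ and $\V[S] = N_\MC\,\V[\cC(\phistab)] \le c_4\,N_\MC\,h^{-2\gamma}$. Inserting the bounds on $N_\MC$ and $h^{-\gamma}$ gives $\cE[S] \le c_\mu\,\epsilon^{-2-\gamma/\alpha}$ (which recovers \eqref{eq:epscost_MC}) and $\V[S] \le c_5\,\epsilon^{-2-2\gamma/\alpha}$, so that $\sqrt{\V[S]} \le \sqrt{c_5}\,\epsilon^{-1-\gamma/\alpha}$.

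Then I would apply Chebyshev's inequality in one-sided form, $\mathbb{P}[\,S \ge \cE[S] + t\,] \le \V[S]/t^2$ for $t>0$, with the choice $t = \sqrt{\V[S]}\,\delta^{-1}$, which makes the right-hand side equal to $\delta^2$. Hence with probability greater than $1-\delta^2$ we have $S < \cE[S] + \sqrt{\V[S]}\,\delta^{-1} \le c_\mu\,\epsilon^{-2-\gamma/\alpha} + \sqrt{c_5}\,\delta^{-1}\,\epsilon^{-1-\gamma/\alpha}$. The last step is to put the fluctuation term into the stated form: since $\epsilon \le \epsilon_{\max}$, $\epsilon^{-1-\gamma/\alpha} = \epsilon\cdot\epsilon^{-2-\gamma/\alpha} \le \epsilon_{\max}\,\epsilon^{-2-\gamma/\alpha}$, so that $S < (c_\mu + c_\sigma\,\epsilon_{\max}\,\delta^{-1})\,\epsilon^{-2-\gamma/\alpha}$ with $c_\sigma := \sqrt{c_5}$, which is the claim.

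I do not anticipate a genuine obstacle; the work is bookkeeping. The points that need care are: ensuring $N_\MC$ is chosen deterministically, so that $S$ really is an i.i.d.\ sum and the elementary identities $\cE[S]=N_\MC\cE[X_1]$, $\V[S]=N_\MC\V[X_1]$ apply (note the per-sample cost $\cC(\phistab(\omega^{(n)}))$ is still random through the path-dependent mesh $h_{\omega^{(n)}}$, but \eqref{eq:cost_assmc}--\eqref{eq:bnd_var_cost} already absorb this); tracking that $c_\mu$ and $c_\sigma$ depend on none of $\epsilon$, $\delta$, $\epsilon_{\max}$; and exploiting $\epsilon \le \epsilon_{\max}$ to trade the $\epsilon^{-1-\gamma/\alpha}$ size of the fluctuation for an $\epsilon_{\max}$-weighted term of leading order $\epsilon^{-2-\gamma/\alpha}$ --- this trade is precisely why $\epsilon_{\max}$, rather than an absolute constant, multiplies $\delta^{-1}$ in the statement.
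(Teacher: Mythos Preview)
Your proposal is correct and follows essentially the same route as the paper: both fix $h\sim\epsilon^{1/\alpha}$ and $N_\MC\sim\epsilon^{-2}$, write the $\epsilon$-cost as an i.i.d.\ sum, bound its mean and variance via \eqref{eq:cost_assmc} and \eqref{eq:bnd_var_cost}, and finish with Chebyshev's inequality. You are in fact slightly more explicit than the paper in spelling out the trade $\epsilon^{-1-\gamma/\alpha}=\epsilon\cdot\epsilon^{-2-\gamma/\alpha}\le\epsilon_{\max}\,\epsilon^{-2-\gamma/\alpha}$ that produces the $\epsilon_{\max}$ factor.
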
 
\begin{proof}
Let  $X \in
  \mathbb{R}^+$ be any positive random variable with $\cE[X] = \mu $ and $\V[X] = \sigma^2 $. 
 For any $\delta \in (0,1)$, the
Chebyshev Inequality implies that
\begin{align}\label{eq:cheb}
1 - \delta^2 \; < \; \mathbb{P}\left[ |X-\mu| < \frac{\sigma}{\delta}
\right] \;
\le \; \mathbb{P}\left[ X < \mu + \frac{\sigma}{\delta} \right] \, . 
 \end{align}
Now, choosing $X = \cC_{\epsilon}(\Qstabhat^{MC})$  and using the choices
$N_{\mathrm{MC}} \sim \epsilon^{-2}$,
$h \sim \epsilon^{1/\alpha}$, an estimate for $\mu = \mathbb{E}[X]$ is given in \eqref{eq:epscost_MC}.   
Moreover, using in addition,  
\eqref{eq:bnd_var_cost},  we have 
\[
\sigma^2 \ = \ \V \left[ X \right] \ = \ \V\left[ \cC_{\epsilon}(\Qstabhat^{MC}) \right] = \V\left[
  \sum_{n=1}^{N_\MC}\cC(\Qstab(\omega^{(n)}))\right] \ = \ N_\MC \,
\V\left[\cC(\Qstab)\right] \ \le \ \left( c_\sigma \, \epsilon^{-1 -
    \frac{\gamma}{\alpha}} \right)^2 \ .
\]
The result then follows by inserting these estimates in \eqref{eq:cheb}.
\end{proof}

Thus, the $\epsilon$-cost of a particular realisation of the
standard Monte Carlo estimator $\Qstabhat^{MC}$ is $\cO
(\epsilon^{-2 - \frac{\gamma}{\alpha}}) $ with probability $1 -
  \delta^2$, for any $\delta \in (0,1)$, i.e.,
arbitrarily close to 1 and not just in mean. In general, the
  asymptotic constant $c_\delta := c_\mu + c_\sigma
  \epsilon_{\max}\delta^{-1}$ blows up, as $\delta \to 0$, but for 
$\epsilon_{\max} = \mathcal{O}(\delta)$, $c_\delta$ can be bounded independently of $\delta$.

To reduce the high cost of the  MC method, the  multilevel Monte Carlo (MLMC)
method uses a hierarchy of discrete models of increasing cost and
accuracy, corresponding to a sequence of decreasing discretisation
parameters $h_{0} > h_{1} > ... > h_{L}$. By choosing 
$h_{L} = h \sim \epsilon^{1/\alpha}$ as above, the most accurate model
on level $L$ is designed to provide full bias accuracy  of
$\cO(\epsilon)$. However, the samples on the coarser grids can be used
as control variates. Writing
$$
\cE[\Qstab] \ = \ \sum_{\ell=0}^L \cE[Y_\ell]\ , \qquad \text{where} \ \ Y_\ell := Q_{h_{\ell}} - Q_{h_{\ell-1}} \ \ \text{and} \ \ Q_{h_{-1}} := 0,
$$
each of the expected values on the right hand side is then estimated separately. In particular, using a standard MC estimator with $N_\ell$ samples for the $\ell$th term, we obtain the MLMC estimator
\begin{equation}
\Qstabhat^{MLMC} \ := \ \sum_{\ell = 0}^L \widehat{Y}^{MC}_\ell \ = \
\sum_{\ell=0}^L \frac{1}{N_\ell} \sum_{n=1}^{N_\ell} Y_\ell (\omega^{(\ell,n)}) \ .
\label{eq:estimatorMLMC}
\end{equation}
Here, the notation $\{\omega^{(\ell,n)}\}_{n=1}^{N_\ell} $ means that $N_\ell$  i.i.d. samples are chosen on level $\ell$,
independently from the samples on the other levels.
 
Since $Q_{h_{\ell}}$ and $Q_{h_{\ell-1}}$ were both assumed to
converge in mean
  to $Q$ as $h_{\ell-1} \to 0$, it follows that $\mathbb{E}[Y_\ell] \to
  0$. To achieve a reduced cost for the MLMC
  estimator we need the additional assumption that there exists a $\beta > 0$ such that 
\begin{equation}
\label{eq:var_assmc}
\V[Y_\ell] \ = \ \cO \left( h_{\ell}^{\beta} \right) . 
\end{equation}
For this reason, MLMC is often referred to as a \textit{variance reduction method}.
In Theorem \ref{prop:epsMC}  below, we shall give a simple sufficient condition for
  \eqref{eq:var_assmc} to hold in our
  context. 

The following theorem is
  a simple extension of \cite[Thm.~1]{Cl:11} to the random cost case (see also
  \cite{FKSS:14}). As in \cite{Cl:11}, we assume for simplicity that
  there exists a $q \in (0,1)$ such that 
\[
h_{\ell} = q h_{\ell-1}, \qquad \text{for all} \ \ \ell=1,\ldots,L. 
\]
\begin{theorem} \label{thm:MLMC}
  Assume that \eqref{eq:bias_assmc}, \eqref{eq:cost_assmc}, \eqref{eq:var_assmc} hold with $\alpha, \beta, \gamma > 0$
  and $\alpha \ge \frac12 \min\{\beta,\gamma\}$.
Then, for any $\epsilon < e^{-1}$, there exist choices $L
  \sim \log(\epsilon^{-1})$ and $\{ N_\ell \}_{\ell=0}^L$ such that $e(\Qstabhat^{MLMC})^2 \le \epsilon^2$ and
\begin{equation}
\cE \left[\cC_{\epsilon}(\Qstabhat^{MLMC})\right] \ \le \ c_\mu \,
  \epsilon^{- 2 -  \max \lbrace 0 , \ (\gamma - \beta) /\alpha \rbrace
  } \,, \ \ \text{for} \ \beta \not= \gamma\;,
\label{eq:epscost_MLMC}
\end{equation}
with $c_\mu > 0$ independent of $\epsilon$. For $\beta = \gamma$, we can achieve $\cE
\big[\cC_{\epsilon}(\Qstabhat^{MLMC})\big] \le c_\mu \, \epsilon^{-
    2}(\log \epsilon)^{2}$. 

\medskip

\noindent 
Let $\delta \in (0,1)$ and let us assume in addition
  that \eqref{eq:bnd_var_cost} holds. Then there exists a constant
  $c_\sigma>0$ independent of $\epsilon$ and $\delta$ such that
\begin{equation}
\mathbb{P} \left[ \cC_{\epsilon}(\Qstabhat^{MLMC}) \le
(c_{\mu} + c_\sigma \delta^{-1}) \, \epsilon^{- 2 -  \max \lbrace 0 , \ (\gamma - \beta) /\alpha \rbrace } \right] \; > \; 1 - \delta^2\,, \ \ \text{for} \ \beta \not= \gamma\;, \label{eq:epscost_MLMC_prob}
\end{equation}
with $c_\mu > 0$ as above. For $\beta = \gamma$, we have $\mathbb{P} \left[ \cC_{\epsilon}(\Qstabhat^{MLMC}) \le
(c_{\mu} + c_\sigma \delta^{-1}) \, \epsilon^{- 2 }(\log \epsilon)^{2} \right] >  1 - \delta^2$.
\end{theorem}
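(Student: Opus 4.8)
The plan is to obtain the expected-cost bounds \eqref{eq:epscost_MLMC} by running the proof of \cite[Thm.~1]{Cl:11} with the deterministic per-sample costs replaced by their expectations, and then to derive the probabilistic bounds \eqref{eq:epscost_MLMC_prob} from a Chebyshev argument, exactly as in the proof of \Cref{thm:probcost}, but with a variance estimate for the (now random) cost that rests on \eqref{eq:bnd_var_cost}.

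\emph{First part (expected cost).} I would take $L$ to be the smallest integer with $h_L = q^L h_0 \le c\,\epsilon^{1/\alpha}$, so that by \eqref{eq:bias_assmc} the bias term in \eqref{eq:MSE} is $\le \epsilon^2/2$ and $L \sim \log(\epsilon^{-1})$, and choose the $N_\ell$ by the standard Lagrangian allocation of \cite{Cl:11}, balancing the sampling variance $\sum_\ell \V[Y_\ell]/N_\ell \le \epsilon^2/2$ against cost, but using the \emph{expected} per-sample cost $\overline{\cC}_\ell := \cE[\cC(Y_\ell)]$. Since one sample of $Y_\ell$ is obtained by solving on the meshes $h_\ell$ and $h_{\ell-1} = q^{-1}h_\ell$, \eqref{eq:cost_assmc} gives $\overline{\cC}_\ell \le \cE[\cC(Q_{h_\ell})] + \cE[\cC(Q_{h_{\ell-1}})] = \cO(h_\ell^{-\gamma})$, and by linearity of expectation $\cE[\cC_\epsilon(\Qstabhat^{MLMC})] = \sum_{\ell=0}^L N_\ell \overline{\cC}_\ell$. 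Feeding this together with \eqref{eq:var_assmc} into the computation of \cite{Cl:11} (the three cases $\gamma < \beta$, $\gamma > \beta$, $\gamma = \beta$, using $\alpha \ge \tfrac{1}{2}\min\{\beta,\gamma\}$) then produces \eqref{eq:epscost_MLMC} and, for $\beta = \gamma$, the $\epsilon^{-2}(\log\epsilon)^2$ bound. I write $\mathcal{B}(\epsilon)$ for this bound, and note that the same computation in fact bounds $\sum_{\ell=0}^L N_\ell h_\ell^{-\gamma} = \cO(\mathcal{B}(\epsilon))$, which I use below. No new ideas are needed here.

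\emph{Second part (variance of the realised cost).} Keep the same $L$ and $\{N_\ell\}$ and set $X := \cC_\epsilon(\Qstabhat^{MLMC}) = \sum_{\ell=0}^L \sum_{n=1}^{N_\ell} \cC(Y_\ell(\omega^{(\ell,n)}))$, a positive random variable with $\mu := \cE[X] \le c_\mu \mathcal{B}(\epsilon)$ by the first part. The samples are independent (across levels by construction, and i.i.d.\ within a level), so $\V[X] = \sum_{\ell=0}^L N_\ell \V[\cC(Y_\ell)]$. Since $\cC(Y_\ell) = \cC(Q_{h_\ell}) + \cC(Q_{h_{\ell-1}})$, one has $\V[\cC(Y_\ell)] \le \cE[\cC(Y_\ell)^2] \le 2\cE[\cC(Q_{h_\ell})^2] + 2\cE[\cC(Q_{h_{\ell-1}})^2]$, and $\cE[\cC(Q_h)^2] = \V[\cC(Q_h)] + (\cE[\cC(Q_h)])^2 = \cO(h^{-2\gamma})$ by \eqref{eq:bnd_var_cost} and \eqref{eq:cost_assmc}; hence $\V[\cC(Y_\ell)] = \cO(h_\ell^{-2\gamma})$. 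Using $h_\ell \ge h_L$ for every $\ell$ and the bound from the first part,
\[
\V[X] \ \le\ c\sum_{\ell=0}^L N_\ell h_\ell^{-2\gamma} \ \le\ c\,h_L^{-\gamma}\sum_{\ell=0}^L N_\ell h_\ell^{-\gamma} \ = \ \cO\big(h_L^{-\gamma}\,\mathcal{B}(\epsilon)\big) \ = \ \cO\big(\epsilon^{-\gamma/\alpha}\,\mathcal{B}(\epsilon)\big),
\]
since $h_L \sim \epsilon^{1/\alpha}$.

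\emph{Conclusion, and the main obstacle.} Chebyshev's inequality, applied exactly as in \eqref{eq:cheb} with $\sigma := \sqrt{\V[X]}$, gives $\cP[X < \mu + \sigma\delta^{-1}] > 1 - \delta^2$, so everything comes down to checking that $\sigma = \cO(\mathcal{B}(\epsilon))$ — the one step that is not bookkeeping or a direct quotation of \cite{Cl:11}. From the variance bound, $\sigma = \cO(\epsilon^{-\gamma/(2\alpha)}\mathcal{B}(\epsilon)^{1/2})$, so $\sigma = \cO(\mathcal{B}(\epsilon))$ is equivalent to $\epsilon^{-\gamma/\alpha} = \cO(\mathcal{B}(\epsilon))$. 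For $\beta \ne \gamma$, $\mathcal{B}(\epsilon) = \epsilon^{-2-\max\{0,(\gamma-\beta)/\alpha\}}$, and this holds exactly when $\gamma/\alpha \le 2 + \max\{0,(\gamma-\beta)/\alpha\}$, which is precisely the hypothesis $\alpha \ge \tfrac{1}{2}\min\{\beta,\gamma\}$ (if $\gamma \le \beta$ it reduces to $\gamma/\alpha \le 2$; if $\gamma > \beta$, to $\beta/\alpha \le 2$). For $\beta = \gamma$, $\mathcal{B}(\epsilon) = \epsilon^{-2}(\log\epsilon)^2 \ge \epsilon^{-2} \ge \epsilon^{-\gamma/\alpha}$ whenever $\epsilon < e^{-1}$, since then $\gamma/\alpha \le 2$. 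Therefore $\mu + \sigma\delta^{-1} \le (c_\mu + c_\sigma\delta^{-1})\mathcal{B}(\epsilon)$ with $c_\sigma$ independent of $\epsilon$ and $\delta$, which is \eqref{eq:epscost_MLMC_prob} together with its $\beta = \gamma$ analogue. The point worth flagging is that, in contrast to the standard Monte Carlo case of \Cref{thm:probcost}, where the standard deviation of the realised cost is a factor $\epsilon$ smaller than its mean, here the extra factor $h_L^{-\gamma}$ picked up on the finest level is generically tight against $\mathcal{B}(\epsilon)$ under the stated hypothesis; consequently $\sigma$ is in general only of the same order as $\mathcal{B}(\epsilon)$, the constant in \eqref{eq:epscost_MLMC_prob} carries $c_\sigma\delta^{-1}$ rather than $c_\sigma\epsilon_{\max}\delta^{-1}$, and it genuinely blows up as $\delta \to 0$.
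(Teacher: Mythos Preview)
Your proof is correct and follows essentially the same skeleton as the paper's: reduce the expected-cost bound to \cite[Thm.~1]{Cl:11} via linearity of expectation, then obtain the probabilistic statement from Chebyshev after bounding the variance of the realised cost using independence and \eqref{eq:bnd_var_cost}.

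The one substantive difference is in how you bound $\sum_{\ell} N_\ell h_\ell^{-2\gamma}$. You factor out the finest level, writing $h_\ell^{-2\gamma} \le h_L^{-\gamma} h_\ell^{-\gamma}$, and recycle the expected-cost bound $\sum_\ell N_\ell h_\ell^{-\gamma} = \cO(\mathcal{B}(\epsilon))$ to get $\V[X] = \cO(\epsilon^{-\gamma/\alpha}\mathcal{B}(\epsilon))$. The paper instead observes that $\sum_\ell N_\ell h_\ell^{-2\gamma}$ is precisely the cost sum from \cite{Cl:11} with $\gamma$ replaced by $2\gamma$ (and the same $N_\ell$, $L$), giving directly $\V[X] = \cO(\epsilon^{-2-\max\{0,(2\gamma-\beta)/\alpha\}})$ (with a log factor when $\beta=2\gamma$). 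The paper's bound is sharper when $\beta > \gamma$: for instance, in the range $\gamma < \beta < 2\gamma$ it yields $\sigma/\mathcal{B}(\epsilon) = \cO(\epsilon^{1-(2\gamma-\beta)/(2\alpha)})$ rather than your $\cO(\epsilon^{1-\gamma/(2\alpha)})$, and for $\beta \ge 2\gamma$ it gives $\sigma = \cO(\epsilon^{-1})$ outright. Both routes, however, reach the stated conclusion under the hypothesis $\alpha \ge \tfrac12\min\{\beta,\gamma\}$; the extra sharpness in the paper's bound only shows that the $c_\sigma\delta^{-1}$ term could sometimes be improved, not that your argument is incomplete. Your closing remark about the absence of an $\epsilon_{\max}$ factor and the genuine blow-up as $\delta\to 0$ is apt and matches the paper's statement.
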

\begin{proof}
As in \cite{FKSS:14}, the proof of \eqref{eq:epscost_MLMC} follows
easily from \cite[Append.~A]{Cl:11}. Due to the
independence of  the samples $\omega^{(\ell,n)}$, there exists
$c_\mu' > 0$ independent of $\epsilon$ such that
\begin{align} 
\cE\left[\cC_{\epsilon}(\Qstabhat^{MLMC})\right] \ 
&= \ \cE\left[
  \sum_{\ell=0}^L
  \sum_{n=1}^{N_\ell}\cC\left(Q_{h_{\ell}}(\omega^{(\ell,n)})\right) +
  \cC\left(Q_{h_{\ell-1}} (\omega^{(\ell,n)})\right) \right]
\ \le \ c_\mu' \sum_{\ell=0}^L N_\ell h_\ell^{-\gamma}\;,
\label{eq:epscost_bound_MLMC}
\end{align}
i.e., the same asymptotic bound as in the deterministic case, and the
result follows as in \cite{Cl:11} with identical choices for $L$ and $\{ N_\ell \}_{\ell=0}^L$.

To prove \eqref{eq:epscost_MLMC_prob}, we exploit again the
  independence of the samples and show as in
  \eqref{eq:epscost_bound_MLMC}, with the same values for $L$ and 
 $\{ N_\ell \}_{\ell=0}^L$, that there exist
$c_\sigma' , c_\sigma > 0$ independent of $\epsilon$ such that
\begin{equation}
\label{eq:epscost_varbound_MLMC}
\V \left[\cC_{\epsilon}(\Qstabhat^{MLMC})\right] \ \le \ (c_\sigma')^2
\sum_{\ell=0}^L N_\ell h_\ell^{-2\gamma} \ \le (c_\sigma)^2
\left\{ \begin{array}{ll} 
\epsilon^{- 2 -  \max \lbrace 0 , \ (2\gamma - \beta) /\alpha \rbrace}
          \ & \text{for} \ \beta \not= 2 \gamma\,,\\[1ex]
\epsilon^{- 2 }(\log \epsilon)^{2} \ & \text{for} \ \beta = 2 \gamma\,.
\end{array} \right.
\end{equation}
The second estimate in   \eqref{eq:epscost_varbound_MLMC} follows as
in \cite[Theorem 1]{Cl:11} after  replacing $\gamma$
  with $2\gamma$.

The  result \eqref{eq:epscost_MLMC_prob}  then follows as in the proof of Theorem
\ref{thm:probcost} via Chebyshev's Inequality. 
To see this, consider \eqref{eq:cheb} with $X =
\cC_{\epsilon}(\Qstabhat^{MLMC})$ and assume first that $\beta <
\gamma$. Using the bounds on the expected value
and variance of $X$ in \eqref{eq:epscost_MLMC}  and
\eqref{eq:epscost_varbound_MLMC} we get 
\[
\mathbb{P} \left[ \cC_{\epsilon}(\Qstabhat^{MLMC}) \le
 c_\delta \, \epsilon^{- 2 -  (\gamma - \beta) /\alpha} \right] \; > \; 1
- \delta^2
\]
with
$$
c_\delta \ := \ c_\mu + 
c_\sigma \epsilon^{1-\frac{\beta}{2\alpha}} \, \delta^{-1} \ \le \
c_\mu + c_\sigma \delta^{-1} \,,
$$
since $\epsilon < e^{-1}$ and $\alpha \ge \frac12 \beta$.

The cases
$\beta=\gamma$, $\beta \in (\gamma,2\gamma)$, $\beta = 2\gamma$ and
$\beta > 2\gamma$ can all be shown similarly.
\end{proof}

Theorem \ref{thm:MLMC} states that provided \eqref{eq:var_assmc} holds
for some $\beta > 0$, the MLMC always achieves a gain of
$\cO\left(\epsilon^{-\min\{\beta,\gamma\}/\alpha}\right)$ in the
asymptotic cost over
standard Monte Carlo, even in the case when the cost per sample is
random and with
probability arbitrarily close to 1. For sufficiently large $\beta$,
the cost of the MLMC method is $\cO\left(\epsilon^{-2}\right)$,
independent of $\alpha$. This fact can be exploited to design unbiased
multilevel estimators of  $\cE [ Q]$ with cost
$\cO\left(\epsilon^{-2}\right)$ \cite{RhGl:15}. On the other hand, if
$\gamma > \beta = 2 \alpha$, the cost of the MLMC method is
$\cO\left(\epsilon^{-\gamma/\alpha}\right)$ which is optimal, in the
sense that it is equivalent (up to a constant) to the cost of
computing a \textit{single (standard) Monte Carlo sample} to  accuracy
$\cO(\epsilon)$.

\medskip

\noindent {\bf Application to Neutron Transport.} \ \
Suppose now that $\Q: \C \rightarrow \mathbb{R}$ is a (linear or nonlinear)  functional (operating with respect to
the spatial variable $x$),  and we are interested in computing the expected value of $ Q(\omega) : = \Q(\phi(\omega, \cdot))$ where $\phi $ is the scalar flux satisfying \eqref{eq:transport_det} and \eqref{eq:bc}.  
This will be approximated by   $Q_h(\omega) : =  \Q(\Phi^h(\omega, \cdot))$, with $\Phi^h$ as defined in Theorem
\ref{thm:phiMNrand}.

Given the clear importance of the parameters $\alpha$, $\beta$, $\gamma$, we would now like to estimate them
theoretically. We have already estimated the parameter $\gamma$ for two different solvers in \Cref{cor:solver_cost}, 
taking into account the sample-dependent mesh size.
The following result gives estimates for   $\alpha$ and $\beta$ under a quite general assumption on $\Q$.

\begin{theorem} \label{prop:epsMC} Make the same  assumptions as in  Theorem \ref{thm:phiMNrand} but assume 
  $ p_{\ast}>2$ in \Cref{ass:random}. Let $2<p<p_{\ast}$    and let  $q = 2p/(p-2)$.  
  Suppose, in addition, $\Q$ satisfies the Lipschitz condition:     $$ \vert \Q(\phi(\omega, \cdot)) - \Q(\tilde{\phi}(\omega, \cdot)) \vert \ \leq C'(\omega) \Vert \phi(\omega, \cdot) - \tilde{\phi}(\omega, \cdot) \Vert_\infty ,\quad \text{for all} \quad \phi, \ 
   \tilde{\phi} \in L^p(\Omega, L_\infty)\;,$$
   where $C' \in L^q(\Omega)$.  
   Then,  \eqref{eq:bias_assmc} and \eqref{eq:var_assmc} hold with
$$
\alpha \; = \;  \eta,  \quad \text{and} \quad \beta \; = \; 2 \eta \  .
$$
\end{theorem}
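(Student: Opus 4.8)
The plan is to obtain both the bias exponent $\alpha$ of \eqref{eq:bias_assmc} and the multilevel variance exponent $\beta$ of \eqref{eq:var_assmc} directly from the Bochner estimate of Theorem~\ref{thm:phiMNrand}, by combining it with the pathwise Lipschitz bound on $\Q$ and a single use of H\"older's inequality in $\Omega$ tuned to the relation $q^{-1}+p^{-1}=\tfrac12$ (which is exactly $q=2p/(p-2)$). Before starting I would fix an exponent $r$ with $p<r\le p_{*}$ --- possible since $2<p<p_{*}$ --- so that Theorem~\ref{thm:phiMNrand} is available with the pair $(p,r)$ and the right-hand side $\|f\|_{L_r(\Omega;\C_{pw}^\eta)}$ is finite by Assumption~\ref{ass:random}(c). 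I would also record that the exact flux $\phi(\omega,\cdot)$ lies in $L_p(\Omega;L_\infty)$, by Corollary~\ref{cor:phibnd}, Assumption~\ref{ass:random} and the generalised H\"older inequality, and that the approximations $\Phi^{h}(\omega,\cdot)$, $\Phi^{h_\ell}(\omega,\cdot)$ are then in $L_p(\Omega;L_\infty)$ by Theorem~\ref{thm:phiMNrand}; this makes the Lipschitz hypothesis on $\Q$ applicable to all pairs of functions that occur below.

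\emph{Bias.} I would first pass from $L_1(\Omega)$ to $L_2(\Omega)$ (legitimate because $\mathbb{P}$ is a probability measure), then apply the Lipschitz bound pathwise and H\"older in $\Omega$:
\[
\big|\cE[Q-Q_h]\big|\ \le\ \cE\big[|Q-Q_h|\big]\ \le\ \|\Q(\phi)-\Q(\Phi^{h})\|_{L_2(\Omega)}\ \le\ \big\|\,C'\,\|\phi-\Phi^{h}\|_\infty\big\|_{L_2(\Omega)}\ \le\ \|C'\|_{L_q(\Omega)}\,\|\phi-\Phi^{h}\|_{L_p(\Omega;L_\infty)}\,.
\]
Theorem~\ref{thm:phiMNrand} bounds the last factor by $C_{p,r}\|f\|_{L_r(\Omega;\C_{pw}^\eta)}\,h^\eta$, which gives \eqref{eq:bias_assmc} with $\alpha=\eta$.

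\emph{Variance.} Since $\V[Y_\ell]\le\cE[Y_\ell^2]=\|Q_{h_\ell}-Q_{h_{\ell-1}}\|_{L_2(\Omega)}^2$, it is enough to bound this $L_2(\Omega)$-norm. I would insert the exact flux and use the triangle inequality together with the pathwise Lipschitz bound,
\[
|Q_{h_\ell}-Q_{h_{\ell-1}}|\ \le\ C'(\omega)\big(\|\phi-\Phi^{h_\ell}\|_\infty+\|\phi-\Phi^{h_{\ell-1}}\|_\infty\big)\,,
\]
and then the same H\"older step and Theorem~\ref{thm:phiMNrand} applied on both levels to obtain
\[
\|Q_{h_\ell}-Q_{h_{\ell-1}}\|_{L_2(\Omega)}\ \le\ \|C'\|_{L_q(\Omega)}\,C_{p,r}\,\|f\|_{L_r(\Omega;\C_{pw}^\eta)}\big(h_\ell^\eta+h_{\ell-1}^\eta\big)\,.
\]
As the levels are geometrically refined, $h_{\ell-1}$ is a fixed multiple of $h_\ell$, so $h_\ell^\eta+h_{\ell-1}^\eta\le c\,h_\ell^\eta$ and hence $\V[Y_\ell]\le c\,h_\ell^{2\eta}$; the level $\ell=0$ (where $Y_0=Q_{h_0}$ with $h_0$ fixed and $\V[Q_{h_0}]<\infty$) is trivial. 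This gives \eqref{eq:var_assmc} with $\beta=2\eta$.

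I do not anticipate any genuine difficulty: the estimate is essentially two applications of H\"older's inequality feeding into Theorem~\ref{thm:phiMNrand}. The points requiring attention are (i) verifying that $q=2p/(p-2)$ is exactly the exponent conjugate to $p$ relative to $2$, so that the $L_q(\Omega)$-integrability of the Lipschitz constant $C'$ matches the $L_p(\Omega;L_\infty)$-bound of Theorem~\ref{thm:phiMNrand}; (ii) checking that the strict inequality $p<p_{*}$ leaves room for a valid choice of $r\in(p,p_{*}]$; and (iii) confirming that $\phi$ and its approximations lie in $L_p(\Omega;L_\infty)$, so that the Lipschitz hypothesis on $\Q$ is genuinely applicable.
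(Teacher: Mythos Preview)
Your proposal is correct and follows essentially the same route as the paper: apply the pathwise Lipschitz bound on $\Q$, use H\"older in $\Omega$ with the relation $q^{-1}+p^{-1}=\tfrac12$, and invoke Theorem~\ref{thm:phiMNrand}. The only cosmetic differences are that the paper treats the bias directly in $L_1(\Omega)$ with the smaller exponent $q'=p/(p-1)<q$ (rather than passing through $L_2$), and for the variance the paper splits $\cE[Y_\ell^2]\le 2\big(\cE[|Q-Q_{h_\ell}|^2]+\cE[|Q-Q_{h_{\ell-1}}|^2]\big)$ before applying H\"older, whereas you apply the triangle inequality pathwise first; neither difference affects the argument.
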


\begin{proof}  
From the given hypothesis and H\"{o}lder's inequality 
  $$ \cE[\vert Q(\omega)  - Q_h(\omega) \vert ] \  =  \
  \cE[\vert \Q(\phi(\omega, \cdot))  - \Q(\Phi^h(\omega,\cdot)) \vert ]
  \ \leq \ \Vert C'\Vert_{L^{q'}(\Omega)} \,  \Vert \phi  - \Phi^h \Vert_{L^p(\Omega, L_\infty)}, $$
  where $q' = p/(p-1) < q$ and  \eqref{eq:bias_assmc} with $\alpha = \eta$ follows from Theorem \ref{thm:phiMNrand}. 
 
 Also, with $Y_\ell = Q_{h_\ell} - Q_{h_{\ell-1}}$, we have 
\begin{align*}
\V[ Y_\ell] \ \leq \ \cE\big[ Y_\ell^2\big] \ \leq \ 2 \left( \cE\left[ \left| Q - Q_{h_{\ell}} \right|^2\right] + \cE\big[ \left| Q - Q_{h_{\ell-1}} \right|^2 \big]\right).
\end{align*}
Arguing as before,
$$\cE\left[ \left| Q(\omega) - Q_{h_{\ell}}(\omega)  \right|^2\right]  = \cE\left[ \left| \Q(\phi(\omega, \cdot)  - \Q(\Phi^{h_\ell}(\omega, \cdot)) \right|^2\right] \leq \ \Vert C'\Vert^2_{L^{q}(\Omega)} \,
\Vert \phi  - \Phi^{h_\ell} \Vert_{L^p(\Omega, L_\infty)}^2, $$
where we used H\"{o}lder's inequality with conjugate indices $p/2$ and $q/2$. Then
\eqref{eq:var_assmc} follows with $\beta = 2 \eta$. 
\end{proof} 

\begin{example} \label{ex:nlf} 
Consider the $q$th moment of the spatial average of $\phi$ (over $[0,1]$):
\begin{equation} \label{eq:qoi}
Q(\omega) \ = \ \| \phi \|_1^q \ := \ \left( \int_0^1 |\phi(\omega,x)| \ dx \right)^q \ , \quad \text{ for some integer }  q \geq 1 \ .
\end{equation}
This satisfies the assumptions of \Cref{prop:epsMC}. The details are given in \cite{Pa:18}. 
\end{example} 

\begin{corollary} \label{cor:epscost}
  Suppose the assumptions of \Cref{prop:epsMC} hold and system \eqref{eq:fulldisc}-\eqref{eq:phi_fulldisc} is solved with Method 2 in \Cref{ex:solvers}.  Then, the $\epsilon$-costs of the Monte Carlo method and of the  multilevel Monte Carlo
  method satisfy, respectively,
$$
\cE\Big[\cC_{\epsilon}(\widehat{Q}_{h}^{MC})\Big] \ = \ \cO \big( \epsilon^{-4-\chi} \big)  \ \ \ \text{ and } \ \ \ \cE\Big[\cC_{\epsilon}(\widehat{Q}_{h}^{MLMC})\Big] \ = \ \cO \big( \epsilon^{- 2 -\chi} \big)\;, 
$$
where $\displaystyle \chi := \frac{1-\eta}{\eta} > 0$.
\end{corollary}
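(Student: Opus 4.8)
The plan is to assemble the three pairs of parameters $(\alpha,\beta,\gamma)$ that govern the abstract complexity estimates of \S\ref{sec:MLMC_theory}, and then simply invoke \Cref{thm:MLMC}. First I would record that, under the hypotheses of \Cref{prop:epsMC}, the bias assumption \eqref{eq:bias_assmc} and the variance-reduction assumption \eqref{eq:var_assmc} hold with $\alpha = \eta$ and $\beta = 2\eta$. Next, since system \eqref{eq:fulldisc}--\eqref{eq:phi_fulldisc} is solved with Method~2 of \Cref{ex:solvers}, \Cref{cor:solver_cost} gives $\mathcal{C}(\Phi^h) \le C'(\omega)\, h_\omega^{-(1+\eta)}$ with $C'\in L_p(\Omega)$ for every $p<\infty$, so \Cref{lem:expect_cost} delivers the cost assumption \eqref{eq:cost_assmc} (and, if the probabilistic variant is wanted, also \eqref{eq:bnd_var_cost}) with $\gamma = 1+\eta$. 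Here one also notes that the cost of evaluating the functional $\Q$ on $\Phi^h$ is dominated by (a constant times) the cost of computing $\Phi^h$, so that $Q_h = \Q(\Phi^h)$ inherits the same cost growth rate in $h_\omega$.

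The second step is to verify the compatibility condition $\alpha \ge \tfrac12\min\{\beta,\gamma\}$ demanded by \Cref{thm:MLMC}. Since $\eta\in(0,1)$ we have $2\eta < 1+\eta$, hence $\min\{\beta,\gamma\} = 2\eta$ and $\tfrac12\min\{\beta,\gamma\} = \eta = \alpha$, so the condition holds with equality; moreover $\beta\neq\gamma$, so the first (non-logarithmic) branch of \eqref{eq:epscost_MLMC} applies. Then \eqref{eq:epscost_MC} bounds the Monte Carlo $\epsilon$-cost by $\mathcal{O}\big(\epsilon^{-2-\gamma/\alpha}\big)$ and \eqref{eq:epscost_MLMC} bounds the multilevel $\epsilon$-cost by $\mathcal{O}\big(\epsilon^{-2-\max\{0,(\gamma-\beta)/\alpha\}}\big)$.

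The final step is arithmetic on the exponents. Since $\gamma/\alpha = (1+\eta)/\eta = 2 + (1-\eta)/\eta = 2+\chi$, the Monte Carlo exponent is $2 + \gamma/\alpha = 4 + \chi$. For the multilevel estimator, $\gamma - \beta = 1-\eta > 0$, hence $(\gamma-\beta)/\alpha = (1-\eta)/\eta = \chi > 0$ and the exponent is $2 + \chi$. These are precisely the two claimed bounds.

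I do not expect a genuine obstacle here: the substantive work has already been carried out in \Cref{prop:epsMC}, \Cref{cor:solver_cost} and \Cref{thm:MLMC}, and this corollary is their bookkeeping combination. The one point that deserves explicit attention is the implicit identification of the cost of $Q_h = \Q(\Phi^h)$ with that of $\Phi^h$ — i.e.\ that the post-processing needed to turn the discrete scalar flux into the quantity of interest is dominated by the transport solve — since it is this identification that allows \Cref{lem:expect_cost} to be applied to $Q_h$ rather than merely to $\Phi^h$.
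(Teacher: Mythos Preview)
Your proposal is correct and follows exactly the route the paper intends: the corollary is stated without proof in the paper, being an immediate bookkeeping combination of \Cref{prop:epsMC} (giving $\alpha=\eta$, $\beta=2\eta$), \Cref{cor:solver_cost} (giving $\gamma=1+\eta$ for Method~2), and the abstract complexity bounds \eqref{eq:epscost_MC} and \eqref{eq:epscost_MLMC}. Your explicit verification of the compatibility condition $\alpha\ge\tfrac12\min\{\beta,\gamma\}$ and the exponent arithmetic are exactly what is needed, and your remark about the functional-evaluation cost being dominated by the transport solve is a sensible point the paper leaves implicit.
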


This corollary shows that indeed in theory, for the neutron transport
problem, a theoretical gain in the asymptotic computational cost of up
to two orders of magnitude in $\epsilon^{-1}$ is possible
on average. In fact, since we have also established a bound on the variance of the
cost of Methods~1 and 2 in Corollary \ref{cor:solver_cost}, we could
even deduce such a result with probability arbitrarily close to 1 from
Theorems \ref{thm:probcost} and \ref{thm:MLMC}.
However, in the numerical section we will see that this theoretical
  result is overly optimistic, since in particular the bound on
  $\alpha$ in Theorem \ref{prop:epsMC} is not sharp. Nevertheless, we
  do observe gains of (at least) one order of magnitude.
  
Similar results can also be shown for other functionals of $\phi$ that
are bounded in $L^p(\Omega,L_\infty)$.

\subsection{Numerical Results} 
\label{sec:prob_num}

In this section, we give some numerical experiments for the case when
$f(x) = e$, for all $x \in (0,1)$, and when $\sigma = \sigma_S +
\sigma_A$ with  fixed absorption cross-section   $\sigma_A =
\exp(0.5)$ and scattering cross-section $\sigma_S$  chosen to be
a lognormal random field with Mat\'{e}rn covariance, i.e.,
$\log \sigma_S$ is a centred  Gaussian random field, with covariance function: 
\begin{equation}
\label{eq:covfunc}
C_\nu (x,y) \ = \ \sigma_{var}^2 \frac{2^{1-\nu}}{\Gamma(\nu)} \left( 2 \sqrt{\nu} \frac{|x - y|}{\lambda_C}  \right)^\nu K_\nu \left( 2 \sqrt{\nu} \frac{|x - y|}{\lambda_C} \right) \ . 
\end{equation}
Here, $\Gamma$ is the gamma function,  $K_\nu$ is the modified Bessel function of the second kind,
$\lambda_C$ is the correlation length
 and $\sigma_{var}^2$ is the variance.  
By increasing the positive  parameter $\nu$ we can  increase  the  smoothness of realizations (see, e.g. \cite{GrKu:15}). 

To sample from $\sigma_S$ we use the Karhunen-Lo{\`e}ve (KL) expansion of $\log \sigma_S$\,, i.e.,
\begin{equation}
\label{eq:KLexp}
\log \sigma_S (x,\omega) \ = \ \sum_{i=1}^{\infty} \sqrt{\xi_i} \ \eta_i(x) \ Z_i(\omega) \ ,
\end{equation}
where $Z_i \sim \mathcal{N}(0,1)$ i.i.d., and  $\xi_i$ and $\eta_i$ are the eigenvalues and the $L_2(0,1)$-orthogonal eigenfunctions of the integral operator induced by the kernel \eqref{eq:covfunc}. In practice, the KL expansion needs to be truncated after a finite number of terms,  and the accuracy of the  truncated expansion  depends on the
rate of decay of the eigenvalues -- this rate  gets faster as $\nu$ increases -- see, e.g. \cite{Lord:14, GrKu:15}.

We will give experiments for the cases $\nu = 0.5$ (when the $\xi_i$ and $\eta_i$ are known  analytically \cite{Lord:14}),  and $\nu = 1.5$ (where $\xi_i$ and $\eta_i$ are computed using the Nystr{\"o}m method - see, for example,  
\cite{EiErUl:07}).
When $\nu = 0.5$, it is known that Assumption \ref{ass:random} holds with $\eta < 0.5$. When  $\nu = 1.5$
then realizations of $\sigma_S$ have H\"{o}lder continuous first derivative,    and
hence Assumption \ref{ass:random} holds for all $\eta < 1$ -- see, e.g.,   \cite{ChSc:13,GrKu:15}. In our experiments we set  $\lambda_C = 1$ and $\sigma_{var}^2 = 1$.

We discretise using the method described in \S \ref{sec:quadrule}. Following \eqref{eq:Nh_relation}, we  set  the angular discretisation level at $N = 2\lceil 2 h^{-1/2} \rceil$ when $\nu = 0.5$ and
$N = (2h)^{-1}$ when $\nu = 1.5$. We truncate the  KL expansion after
$225 \lceil h^{-1/2}\rceil $ terms when $\nu = 0.5$ and after  $8
\lceil h^{-1}\rceil $ terms when $\nu = 1.5$, which ensures in both
cases that the truncation error is negligible compared to the discretisation error. 

We compute  two measurements of error in the mean,
\begin{align}\label{eq:error}  
\cE[ \| \phi - \phi^{h,N(h)} \|_{\infty}]\quad \text{ and } \quad \cE[
  | Q - Q_{h} |]
\end{align}  
(where $Q$ is defined in \eqref{eq:qoi} and we take $q = 1$). To
compute these, we  estimate $\phi$ by a reference solution with
$h^{-1} = 512$, $N = 256$, and we choose $3600$/ $2048$ KL  modes for
the cases $\nu = 0.5, 1.5$ respectively. The expectations in
\eqref{eq:error} are   estimated using a standard Monte Carlo
estimator (cf. \eqref{eq:estimatorMC}) with $32,768$ samples. Note
that when computing $\phi^{h, N(h)}(\omega, \cdot)$ we ignore the
theoretical path dependent stability criterion
\eqref{eq:hNlarge},  which led to the construction
\eqref{eq:stab_h_def}, and simply compute solutions with mesh
parameters $h$ and $N(h)$ for each sample.

Numerical computations of  \eqref{eq:error} are presented in  \Cref{fig:conv_rnd}.
For $\mathbb{E}[\Vert \phi - \phi^{h, N(h)} \Vert_\infty ]$ we
observe $\cO(h)$ convergence, for both  $\nu = 0.5, 1.5$,  even though
when  $\nu = 0.5$ we are only able to prove convergence of order
$\mathcal{O}(h^\eta)$ with $\eta < 0.5$.
We also observe smaller errors and a  faster  convergence rate (close to   $\cO(h^2)$)
for the error in the functional   $\cE[ | Q - Q_{h} |]$.

\begin{figure}[t]
\begin{center}
 \includegraphics[width=0.46\textwidth]{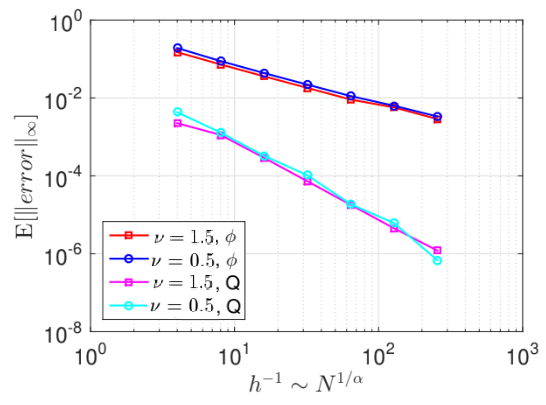}
 \caption{ Convergence of the mean error(s) $\cE[ \| \phi - \phi^{h,N(h)} \|_\infty]$ and
   $\cE[ | Q - Q_{h} |]$
       \label{fig:conv_rnd}}
\end{center}
\end{figure}

Our final  set of results concern the  $\epsilon$-cost  of the
standard (MC) and multilevel  (MLMC)  Monte Carlo methods
for computing $\mathbb{E}[Q]$ where $Q(\omega) = Q(\phi(\omega,
\cdot))$ and $Q$ is given by  \eqref{eq:qoi} with $q = 1$.   
We use  Method 2 of Example \ref{ex:solvers} as the linear system
solver for each realisation. Then Corollary \ref{cor:epscost} gives
theoretical  projections for the $\epsilon$-costs for each of these
methods in terms of $\eta$. The relevant values of $\eta$ are $\eta <
0.5$ when $\nu = 0.5$ and $\eta < 1$ when $\nu = 1.5$, in which case
$\chi>1$ (for $\nu = 0.5$) and $\chi >0$ (for $\nu = 1.5$).
Hence the theoretical $\epsilon$-costs given by Corollary \ref{cor:solver_cost} are
\begin{align}
 \label{epsobs} \mathcal{O}(\epsilon^{-s}) 
\end{align}
with $s$ as given in the column ``$s$, {\tt theory}'' in Table
\ref{table:MCrates}. To compare these to the observed
$\epsilon$-costs, we  give in the column ``$s$, {\tt observed}'' the
corresponding observed rates of growth of  $\epsilon$-cost when
estimating $\cE[Q]$, using the data which went into the construction
of Figure \ref{fig:epscost_hybrid}. 
 
The graphs in Figure \ref{fig:epscost_hybrid} depict the growth in
$\epsilon$-cost for each of the methods and each $\nu$ in the case
when $h = 1/512$ and $N = N(h)$ and show the superiority of the
multilevel method.  We observe from Table \ref{table:MCrates} and
Figure \ref{fig:epscost_hybrid} that for both values of $\nu$, the MLMC method
gives us excellent gains over the MC method in practice,
of at least one order of magnitude. The discrepancy between the
theory and numerics here arises because, for the two specific cases
considered, the observed value of $\alpha$ in  \eqref{eq:bias_assmc}
is somewhat higher than the theoretically predicted value of $\alpha =
\eta $ (with  $\eta < 0.5$ when $\nu = 0.5$ and $\eta < 1$ when $\nu = 1.5$).

\begin{figure}[t]
\begin{center}
\includegraphics[width=0.485\textwidth]{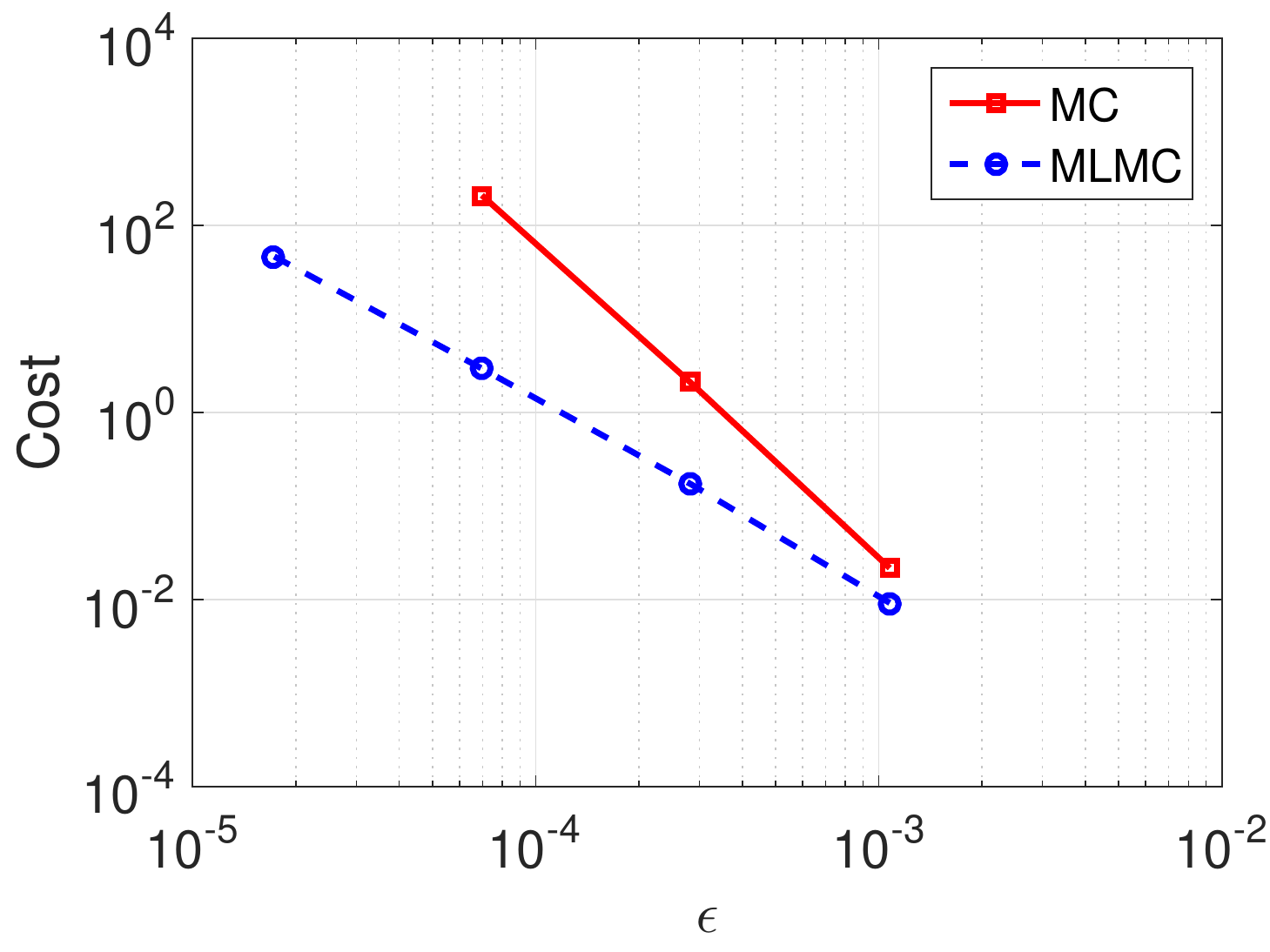}\hspace{2ex}\includegraphics[width=0.485\textwidth]{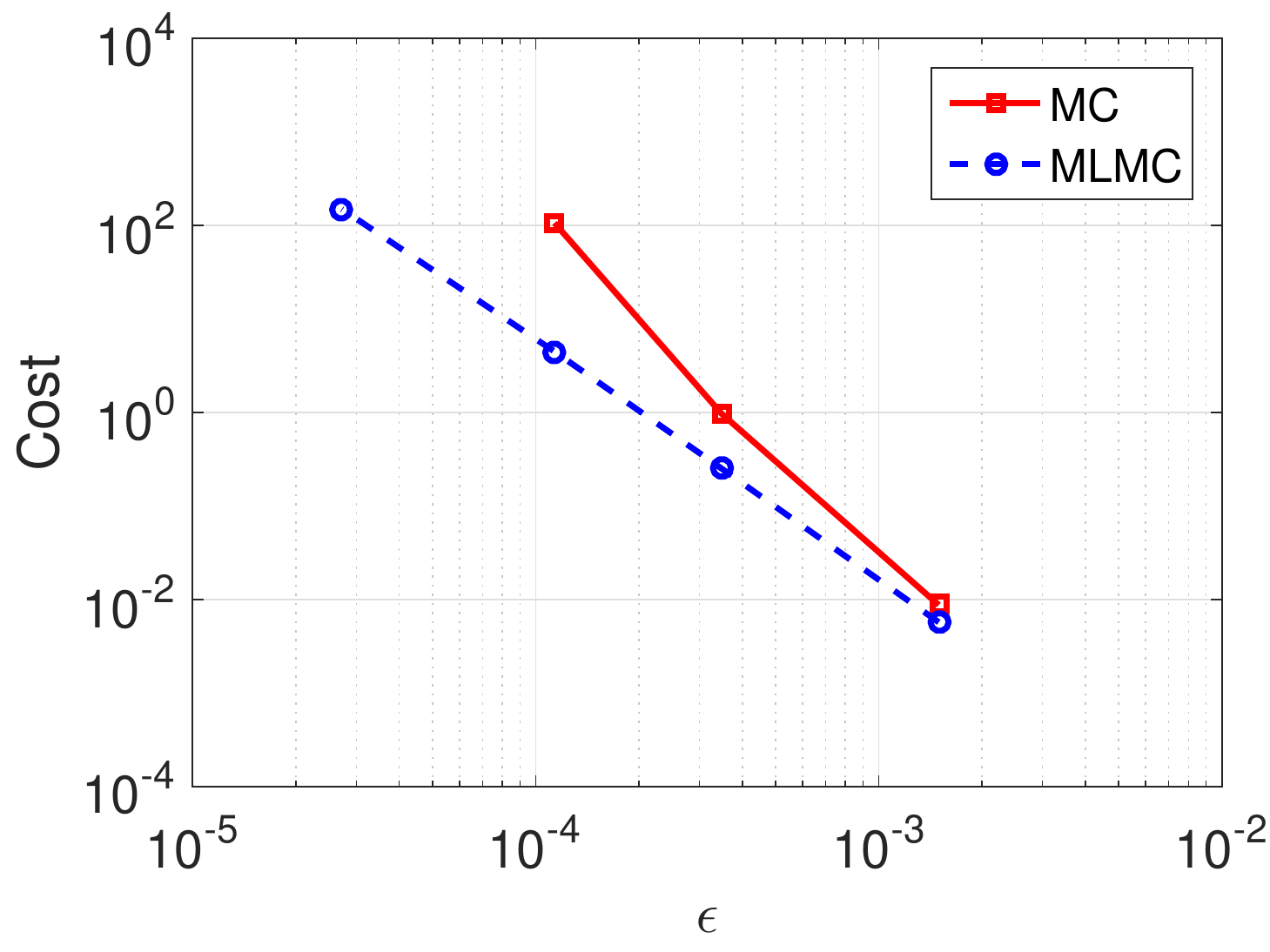}
\caption{Cost (in seconds) plotted against $\epsilon$  for the error (computed with respect to a reference solution)  for standard and multilevel Monte Carlo. (Left) $\nu = 1.5$ and (Right) $\nu = 0.5$.\label{fig:epscost_hybrid}}
\end{center}
\end{figure}

\begin{table}[t]
\centering
\begin{tabular}{|c|c|c|c|}
  \hline
  $\nu$ & Method  & $s$, {\tt theory}  & $s$,  {\tt observed} \\ \hline
  0.5 & MC & $> 5$ & $3.4$\\
  0.5 & MLMC & $>3$ & $2.4$ \\
  1.5 & MC & $>4$ & $3.3$ \\
  1.5 & MLMC & $>2$ & $2.1$\\
  \hline
 \end{tabular}
 \caption{Summary of computational $\epsilon$-cost rates with $s$ as in \eqref{epsobs}.\label{table:MCrates}} 
\end{table}

\section{Conclusion}

We have given a novel  error analysis  for  the discretised  heterogeneous transport equation, demonstrating
how the error depends on the heterogeneity. Although this is done for the 1d space and 1d  angle case (slab geometry) and  a classical discrete ordinates discretisation, the analysis could be extended to higher dimensional cases,
although with considerably more technicalities.  This analysis
is based on a careful analytical treatment of a certain underlying \igg{integral} equation.
We  then applied this analysis to the case when the cross-sections are  given by a random fields and presented 
error estimates in suitable Bochner norms for both the scalar flux and for functionals of it.
We assumed the input data could be piecewise continuous H{\"o}lder fields with low regularity.
We outlined   the Monte Carlo and multilevel Monte Carlo methods for quantifying the propagation of uncertainty
in this model problem. Using our probabilistic error estimates we then  rigorously proved estimates for the cost of these methods. These predict the superiority of the multilevel methods,  
even in the case of very rough input data.
Finally  we presented numerical results to support the theory.
Further numerical investigation of uncertainty quantification for the transport equation,  including some 2d in space and 1d in angle model problems is given in the PhD thesis \cite{Pa:18}.  

\bigskip 

\noindent 
{\bf Acknowledgement.}
Matthew Parkinson was supported by the
EPSRC Centre for Doctoral Training in Statistical Applied Mathematics at
Bath, 
under project EP/L015684/1.  We thank EPSRC and Wood plc.~for financial support
for this project and we particularly thank Professor Paul Smith of the Answers Software Team  for many helpful
discussions. This research made use of
the Balena High Performance Computing (HPC) Service
at the University of Bath.

\bibliographystyle{plain}

\begin{thebibliography}{10}

\bibitem{AbSt:64}
M.~Abramowitz and I.A.~Stegun.
\newblock {\em Handbook of Mathematical Functions}, 
\newblock Dover, NY, 1964.

\bibitem{AdLa:02}
M.L.~Adams and E.W.~Larsen.
\newblock Fast iterative methods for discrete-ordinates particle transport
  calculations.
\newblock {\em Prog. Nucl. Energy}, 40(1):3--159, 2002.

\bibitem{AlViGa:89}
E.J.~Allen, H.D.~Victory~Jr, and K.~Ganguly.
\newblock On the convergence of finite-differenced multigroup,
  discrete-ordinates methods for anisotropically scattered slab media.
\newblock {\em SIAM J. Numer. Anal.}, 26(1):88--106, 1989.
%
\igg{\bibitem{AsKuLa:92} M.~Asadzadeh, P.~Kumlin, and S.~Larsson. \newblock
  The discrete ordinates
  method for the neutron transport equation in an infinite cylindrical domain.
  \newblock
  {\em Math.
    Models Meth. Appl. Sci.} 2(3):317-338, 1992.}

\bibitem{As:98}
M.~Asadzadeh.
\newblock A finite element method for the neutron transport equation in an
  infinite cylindrical domain.
\newblock {\em SIAM J. Numer. Anal.}, 35(4):1299--1314, 1998.
%
\igg{\bibitem{As:09}
M.~Asadzadeh and L.~Thevenot.
\newblock {On discontinuous Galerkin and discrete ordinates approximations
  for neutron transport equation and the critical eigenvalue}.
\newblock {\em Il Nuovo Cimento}, 33(1):21-29, 2010.}

\bibitem{AyEa:15}
D.~Ayres and M.D.~Eaton.
\newblock Uncertainty quantification in nuclear criticality modelling using a
  high dimensional model representation.
\newblock {\em Ann.~Nucl.~Energy}, 80:379--402, 2015.

\bibitem{AyPaEa:14}
D.~Ayres, S.~Park, and M.D.~Eaton.
\newblock Propagation of input model uncertainties with different marginal
  distributions using a hybrid polynomial chaos expansion.
\newblock {\em Ann. Nucl. Energy}, 66:1--4, 2014.

\bibitem{BeGl:70}
G.I.~Bell and S.~Glasstone.
\newblock {\em Nuclear Reactor Theory}, 
\newblock Van Nostrand Reinhold, NY, 1970.

\bibitem{Bl:16}
J.C.H.~Blake.
\newblock {\em Domain Decomposition Methods for Nuclear Reactor Modelling with
  Diffusion Acceleration}.
\newblock PhD thesis, University of Bath, 2016.

\bibitem{BlGrScSp:18}
\igg{J.C.H.~Blake, I.G. Graham, F. Scheben and A. Spence 
\newblock {The radiative transport equation with heterogeneous cross-sections},
\newblock In {\em On the Frontiers of High-Dimensional Computation, F. Kuo Guest Editor},
2018 Matrix Annals,  D.R. Wood, 
J. de Gier, C. Praeger, and T. Tao  (Eds.),  Springer, 2019. ~{\tt https://arxiv.org/abs/1903.08623}}

\bibitem{ChSc:13}
J.~Charrier, R.~Scheichl, and A.L. Teckentrup.
\newblock Finite element error analysis of elliptic pdes with random
  coefficients and its application to multilevel monte carlo methods.
\newblock {\em SIAM J. Numer. Anal.}, 51(1):322--352, 2013.

\bibitem{Cl:11}
K.A. Cliffe, M.B. Giles, R.~Scheichl, and A.L. Teckentrup.
\newblock Multilevel Monte Carlo methods and applications to elliptic PDEs with
  random coefficients.
\newblock {\em Comput. Visual. Sci.}, 14(1):3-15, 2011.

\bibitem{DaGrMu:18} W.~Dahmen, F.~Gruber, and O.~Mula, 
\newblock  An adaptive nested source term iteration for radiative
transfer equations, {\em Math.~Comput.}, in press, 2019. 

\bibitem{DaLi:12}
R.~Dautray and J.L. Lions.
\newblock {\em Mathematical Analysis and Numerical Methods for Science and
  Technology: Volume 6: Evolution Problems II}.
\newblock Springer-Verlag, 2012.

\bibitem{DeSc:84}
R.A.~DeVore and L.R. Scott.
\newblock Error bounds for Gaussian quadrature and weighted-$L^1$  polynomial
  approximation.
\newblock {\em SIAM J. Numer. Anal.}, 21(2):400--412, 1984.

\bibitem{DuTuCh:10}
T.~Durduran, R.~Choe, W.B. Baker, and A.G. Yodh.
\newblock Diffuse optics for tissue monitoring and tomography.
\newblock {\em Rep. Prog. Phys.}, 73(7):076701, 2010.

\bibitem{EiErUl:07}
M.~Eiermann, O.G. Ernst, and E.~Ullmann.
\newblock Computational aspects of the stochastic finite element method.
\newblock {\em Comput. Visual. Sci.}, 10(1):3--15, 2007.

\bibitem{FKSS:14}
A.~Ferreiro-Castilla, A.E.~Kyprianou, R.~Scheichl, and
G. Suryanarayana.
\newblock Multilevel Monte Carlo simulation for L\'{e}vy processes
based on the Wiener–Hopf factorisation.
\newblock {\em Stoch.~Proc.~Appl.}, 124(2):985--1010, 2014.

\bibitem{Fi:11}
E.D.~Fichtl and A.K.~Prinja.
\newblock The stochastic collocation method for radiation transport in random
  media.
\newblock {\em J. Quant. Spectrosc. Radiat. Transfer},
112(4):646--659, 2011.
%
\igg{\bibitem{FuRa:96} C.~F\"uhrer and R.~Rannacher, \newblock  Error analysis for the finite element approx-
imation of a radiative transfer model. \newblock {\em RAIRO
  Model. Math. Anal. Numr.} 30(6):743--762, 1996.}

\bibitem{Gi:08}
M.B.~Giles.
\newblock Multilevel Monte Carlo path simulation.
\newblock {\em Oper. Res.}, 56(3):607--617, 2008.

  
\bibitem{Gi6:13}
L.~Gilli, D.~Lathouwers, J.L. Kloosterman, T.H.J.J. van~der Hagen, A.J. Koning
  and D.~Rochman.
\newblock Uncertainty quantification for criticality problems using
  non-intrusive and adaptive polynomial chaos techniques.
\newblock {\em Ann. Nucl. Energy}, 56:71--80, 2013.
%
\igg{\bibitem{Gil:18} L.~Giret. 
\newblock {\em Numerical Analysis of a Non-Conforming Domain Decomposition for the Multigroup SPN Equations.}
\newblock PhD Thesis,  Universit\'e Paris-Saclay, 2018.}

\bibitem{GrKu:15}
I.G. Graham, F.Y. Kuo, J.A. Nichols, R.~Scheichl, C.~Schwab, and I.H. Sloan.
\newblock Quasi-Monte Carlo finite element methods for elliptic PDEs with
  lognormal random coefficients.
\newblock {\em Numer. Math.}, 131(2):329--368, 2015.

\bibitem{GrPaSc:17}
I.G. Graham, M.J. Parkinson, and R.~Scheichl.
\newblock Modern Monte Carlo variants for uncertainty quantification in neutron
transport. In ``{\em Festschrift for the 80th Birthday of Ian Sloan}'',
J. Dick, F.Y. Kuo, and H. Wozniakowski (Eds.), Springer-Verlag, 2018.

\bibitem{He:01} S. Heinrich. Multilevel Monte Carlo methods. In: S. Margenov,
J. Wa\'{s}niewski, P. Yalamov (Eds.),
Large-Scale Scientific Computing,  {\em Lecture Notes in Computer Science} 2179:58--67, Springer-Verlag, Berlin,
2001. 

\bibitem{JiLi:91} S. Jin and D. Levermore,  The discrete-ordinate method in diffusive regime,
  {\em Transport Theory and Statistical Physics}  20:413--439, 1991.

\bibitem{JiLuPa:18} S. Jin, H. Lu and L. Pareschi.
Efficient stochastic asymptotic-preserving IMEX methods for transport
equations with diffusive scalings and random inputs. 
{\em SIAM J.~Sci.~Comput.}, 40:A671--A696, 2018. 

\bibitem{KaRa:14} G.~Kanschat and  J.C.~Ragusa. 
A robust multigrid preconditioner for SNDG approximation of monochromatic, isotropic radiation transport problems.
{\em SIAM J.~Sci.~Comput.}, 36(5): 2326--2345,  2014.
  
  
\bibitem{Ke:60}
H.B. Keller.
\newblock On the pointwise convergence of the discrete-ordinate method.
\newblock {\em SIAM J. Appl. Math.}, 8(4):560--567, 1960.

\bibitem{LaNe:82}
E.W. Larsen and P.~Nelson.
\newblock Finite-difference approximations and superconvergence for the
  discrete-ordinate equations in slab geometry.
\newblock {\em SIAM J. Numer. Anal.}, 19(2):334--348, 1982.

\bibitem{LeMi:84}
E.E. Lewis and W.F. Miller.
\newblock {\em Computational Methods of Neutron Transport}.
\newblock American Nuclear Society, 1993.


\bibitem{Lord:14}
G.J. Lord, C.E. Powell, and T.~Shardlow.
\newblock {\em An Introduction to Computational Stochastic PDEs}.
\newblock  Cambridge University Press, 2014.


\bibitem{Pa:18} M.J. Parkinson, {\em Uncertainty Quantification in Radiative Transport},
  PhD Thesis, University of Bath, 2018.  

\bibitem{PiSc:83}
  J.~Pitk{\"a}ranta and L.R.~Scott.
\newblock Error estimates for the combined spatial and angular approximations
  of the transport equation in  slab geometry.
\newblock {\em SIAM J.~Numer.~Anal.}, 20(5):922--950, 1983.

\bibitem{ReHi:73}
W.H. Reed and T.R. Hill.
\newblock Triangular mesh methods for the neutron transport equation.
\newblock Los Alamos Report LA-UR-73-479, 1973.

\bibitem{Re:10}
K.~Ren.
\newblock Recent developments in numerical techniques for transport-based
  medical imaging methods.
\newblock {\em Commun. Comput. Phys.}, 8(1):1--50, 2010.

\bibitem{RhGl:15}
C.-H. Rhee and P.~W. Glynn.
\newblock Unbiased estimation with square root convergence for SDE models.
\newblock {\em Oper. Res.}, 63(5):1026--1043, 2015.

\bibitem{SaMc:82}
R.~Sanchez and N.J. McCormick.
\newblock Review of neutron transport approximations.
\newblock {\em Nucl. Sci. Eng.}, 80(4):481-535, 1982.


\bibitem{TeScGiUl:13}
A.L. Teckentrup, R.~Scheichl, M.B. Giles, and E.~Ullmann.
\newblock Further analysis of multilevel Monte Carlo methods for elliptic PDEs
  with random coefficients.
\newblock {\em Numer. Math.}, 125(3):569--600, 2013.

\bibitem{Vi:84}
H.D. Victory~Jr.
\newblock Convergence of the multigroup approximations for subcritical slab
  media with applications to shielding calculations.
\newblock {\em Adv. Appl. Math.}, 5(3):227--259, 1984.

\bibitem{ZhLiJi:16}  B. Zhang, H. Liu and S. Jin.  An asymptotic preserving Monte Carlo method for the
  multispecies Boltzmann equation ,
  {\em J. Comput. Phys.} 305: 575-588, 2016. 

\bibitem{ZhLi:18}     X. Zhong and Q. Li. 
Galerkin Methods for stationary radiative transfer equations with uncertain coefficients, 
{\em J. Sci. Comput.}, 76: 1105–1126, 2018. 

\end{thebibliography}

\end{document}